\documentclass[11pt,reqno,UTF8,twoside]{amsart}
\usepackage{amssymb}
\usepackage[dvips]{epsfig}
\usepackage{booktabs}
\usepackage{multirow}
\usepackage{bbm}
\usepackage{mathrsfs}
\usepackage{xltabular}
\usepackage{tabularx}
\usepackage{makecell} 
\usepackage{ragged2e} 
\usepackage{graphicx}
\usepackage{color}
\usepackage{amsmath}
\allowdisplaybreaks
\usepackage{amssymb}
\usepackage{amsfonts}
\usepackage{geometry}
\usepackage{xstring}
\usepackage{amsthm}
\usepackage[normalem]{ulem}    
\usepackage{bm}
\usepackage{cite}
\usepackage{tikz}
\makeatletter
\@namedef{subjclassname@2020}{\textup{2020} Mathematics Subject 
	Classification}
\makeatother

\newcommand{\Z}{\mathbb{Z}}
\newcommand{\C}{\mathbb{C}}

\newcommand{\R}{\mathbb{R}}
\newcommand{\T}{\mathbb{T}}
\newcommand{\E}{\mathbb{E}}

\newcommand{\spec}{{\rm Spec}}

\newcommand{\hop}{\rm hop}

\newcommand{\Dhop}{D_{\rm hop}}
\newcommand{\Dbig}{D_{\rm big}}

\newcommand{\La}{\Lambda}

\renewcommand{\P}{\mathbb{P}}

\newcommand{\baromega}{\overline{\omega}}

\newcommand{\hatf}{\hat{f}}
\newcommand{\hatxi}{\hat{\xi}}
\newcommand{\baralpha}{\bar{\alpha}}

\newcommand{\mfN}{\mathfrak{N}}
\newcommand{\mfS}{\mathfrak{S}}

\newcommand{\mcB}{\mathcal{B}}

\newcommand{\mcC}{\mathcal{C}}
\newcommand{\mcE}{\mathcal{E}}
\newcommand{\mcF}{\mathcal{F}}

\newcommand{\mcS}{\mathcal{S}}

\newcommand{\mcK}{\mathcal{K}}
\newcommand{\mcT}{\mathcal{T}}

\newcommand{\mcO}{\mathcal{O}}
\newcommand{\mcG}{\mathcal{G}}
\newcommand{\mcW}{\mathcal{W}}

\newcommand{\mcN}{\mathcal{N}}

\newcommand{\wta}{\widetilde{a}}
\newcommand{\wtE}{\widetilde{E}}
\newcommand{\wha}{\widehat{a}}
\newcommand{\whL}{\widehat{L}}
\newcommand{\whD}{\widehat{D}}

\newcommand{\wtV}{\widetilde{V}}
\newcommand{\wtlambda}{\widetilde{\lambda}}

\newcommand{\wtS}{\widetilde{S}}
\newcommand{\wtB}{\widetilde{B}}

\renewcommand{\Im}{{\rm Im}}
\renewcommand{\Re}{{\rm Re}}
\newcommand{\dist}{{\rm dist}}

\newcommand{\supp}{{\rm supp}}

\newtheorem{thm}{Theorem}[section]
\newtheorem{lem}[thm]{Lemma}

\newtheorem{Def}{Definition}[section]

\newtheorem{rmk}{\bf Remark}[section]

\theoremstyle{definition}

\newtheorem{ctex}[thm]{Counterexample}

\newtheorem{claim}[thm]{\bf Claim}

\numberwithin{equation}{section}

\usepackage[colorlinks=true,pdfstartview=FitV,linkcolor=magenta,citecolor=cyan]{hyperref}

\keywords{Long-range hopping, Rational Laurent symbol, Anderson-Bernoulli model, Unique continuation, Anderson localization, Multi-scale analysis}

\usepackage{caption}

\begin{document}
\title[1D Anderson-Bernoulli model with long-range hopping]{Localization and unique continuation for the Anderson-Bernoulli model with long-range hopping on $\Z$}

\author[Liu]{Shihe Liu}
\address[S. Liu] {School of Mathematical Sciences,
Peking University,
Beijing 100871,
China}
\email{2301110021@stu.pku.edu.cn}

\author[Shi]{Yunfeng Shi}
\address[Y. Shi] {School of Mathematics,
Sichuan University,
Chengdu 610064,
China}
\email{yunfengshi@scu.edu.cn}

\author[Zhang]{Zhifei Zhang}
\address[Z. Zhang] {School of Mathematical Sciences,
Peking University,
Beijing 100871,
China}
\email{zfzhang@math.pku.edu.cn}

\begin{abstract}
  In this paper, we study Anderson localization near the spectral edge for the Anderson–Bernoulli model on $\Z$ with long-range hopping. When the hopping has a rational Laurent symbol, a quantitative version of the unique continuation principle can be proved, and localization occurs. For the unique continuation in the general case, we give some counterexamples and prove a weaker result for hopping that decays faster than exponential rate. To the best of our knowledge, this is the first localization result for the long-range Anderson model with pure Bernoulli potentials.
\end{abstract}

\maketitle

\tableofcontents

\section{Introduction}

\subsection{Background}

In 1958, Anderson \cite{And58} introduced the random tight-binding model
\begin{equation}\label{standard Anderson model}
	H=\Delta_{\rm free}+\lambda V,
\end{equation}
where $\Delta_{\rm free}$ is the free Laplacian on $\Z^d$ and $V$ is a random potential. An important feature of this model is  the presence of Anderson localization (AL), meaning that the spectrum is pure point in some energy region and the corresponding eigenfunctions decay exponentially. When the coupling strength $\lambda$ is small, localization holds near the spectral edges; for large $\lambda$, it may occur throughout the spectrum.

For the case where the hopping is the free Laplacian (i.e., a short-range interaction), many localization results have been obtained when $V$ is given by i.i.d. random variables with a \textbf{continuous} distribution (e.g., H\"older continuous or absolutely continuous). The important approaches in this setting include the transfer matrix method in one dimension, and in higher dimensions either the multi-scale analysis (MSA) developed by Fr\"ohlich and Spencer \cite{FS83}, or the fractional moment method (FFM) introduced by Aizenman and Molchanov \cite{AM93}.

Given the abundance of localization results for  \eqref{standard Anderson model},  another central question is whether localization persists when the free Laplacian is replaced by a long-range hopping operator $T$. This problem is of significant interest to both physicists and mathematicians, and has attracted considerable attention over the years. For instance, Yeung-Oono  \cite{YO87} performed numerical physical experiments for various long-range hopping operators  on the one-dimensional lattice $\Z$ (e.g., the  \textbf{exponential  decay $e^{-|n|/100}$, sub-exponential decay  $e^{-\sqrt{|n|}}$, and the power-law decay $|n|^{-3}$}) coupled with a random  potential $V$ having either a \textbf{uniform distribution} on intervals or \textbf{two-point Bernoulli} variables. Localization was observed numerically, suggesting that certain localization results should be mathematically  provable  for long-range Anderson type models.

In the long-range setting, when the random potential has a \textbf{continuous} distribution, there is a substantial body of rigorous results on localization; see, e.g., \cite{SS89, Wan91, AM93, Kle93, Gri94, JM99, Klopp02, Shi21, SWY25}. In contrast, for the \textbf{Bernoulli} potential case, much less is known, even for \eqref{standard Anderson model} with the free Laplacian. The main difficulty is that the lack of regularity in the potential distribution invalidates an a priori Wegner estimate, which plays an essential role in the proof of localization via the MSA method.

For the Anderson-Bernoulli model (ABM) with the free Laplacian, the progress so far can be listed as follows. In one dimension, the obstacle can be overcome by transfer matrix methods, such as the Furstenberg/LePage approach; see \cite{CKM87, GM89, KLS90 ,SVW98, DSS02, BDFGVWZ, JZ19, GK21}. In higher dimensions, however, transfer matrix methods fail. A breakthrough came with Bourgain's work \cite{Bou04}, which introduced the ``free site argument'' and Sperner's Lemma  to prove AL near the spectral edge for ABM on $\Z^d$ with some alloy-type potential, and later, together with Kenig \cite{BK05}, revealed a deep connection between localization and the unique continuation principle (for ABM on $\R^d$). In particular, \cite{BK05} showed that, \textbf{unlike the continuous case, when the potential contains Bernoulli components, the analytic properties of the differential operator play a crucial role}. Subsequently, Ding and Smart \cite{DS20}, using techniques from \cite{BLMS}, developed a probabilistic unique continuation principle for ABM on $\Z^2$ and established localization near the spectral edge. Recently, Li and Zhang \cite{LZ22} extended this result to $\Z^3$; the case $\Z^d$ for $d\ge 4$ remains open. The results of \cite{DS20} and \cite{LZ22} were later generalized to  the case of non-stationary distributions by  \cite{Hur26a,Hur26b}. For attempts in dimension $d\ge 4$, we mention the work of Imbrie \cite{Imb21}, which  proved AL for  Anderson type models with a discrete distribution taking $N$ values (for $N$ sufficiently large) at large disorder. Very recently,  Liu-Shi-Zhang   \cite{LSZ26}  established  AL for a hierarchical ABM in arbitrary dimensions.

All the works mentioned above concern ABM with the free Laplacian. To the best of our knowledge, no result  is  available for ABM with long-range hopping, even in the one-dimensional lattice $\Z$ (for the long-range hopping case, the transfer matrix methods are still not applicable). A related work is \cite{LSZ25}, in which we extended the method of \cite{Bou04} to the  long-range hopping case  in arbitrary dimensions, but the potential there is of alloy type rather than  the  Bernoulli one.  

In the present paper, we establish a (deterministic) unique continuation principle for a class of long-range Schr\"odinger operators on $\Z$, and provide counterexamples showing that such a principle cannot hold for all hopping. These counterexamples reveal that the unique continuation property is intimately connected to the singularities of the Laurent symbol of the hopping, indicating that the global structure of the hopping, rather than local information, plays a decisive role in this issue. The unique continuation result proved here, together with the MSA (which also contains some novelty in the present work), enables us to obtain a corresponding localization results  for the one-dimensional ABM with hopping  whose Laurent symbol is rational. This is sufficient to give an affirmative answer to the Anderson-Bernoulli localization conjecture in \cite{YO87} with a hopping decaying like  $e^{-|n|/100}$. Thus, in this paper, certain analytic features of the hopping play a key role, consistent with the insight of \cite{BK05}.

We also emphasize that, while the unique continuation result is obtained only in one dimension, the method in this paper is dimension-independent and can be extended to arbitrary dimensions. This may open up a promising avenue for investigating higher-dimensional long-range models.

Finally, we note that for hopping with a sub-exponential decay, power-law decay, or exponential decay with a more general Laurent symbol, the localization problem for the one-dimensional ABM remains open. Moreover, the extension of localization result  for long-range ABM to higher dimensions appears to be a challenging open problem.

\subsection{Main results}
Consider the following Schr\"odinger operator on $\Z$:
\begin{equation}\label{longrange schrodinger operator}
	H(\omega)=T+\lambda V(\omega), \ \lambda > 0,
\end{equation}
where $\omega$ lies in some probability space $(\Omega,\mathbb{P},\mathcal{F})$, and $V(n)(\omega)$, $n\in\Z$, are i.i.d. Bernoulli potentials on $\Z$ satisfying 
\begin{equation}\label{Bernoulli potential}
	\mathbb{P}(V(n)=0)=\mathbb{P}(V(n)=1)=\frac{1}{2}, \ \forall n\in\Z.
\end{equation}
For simplicity we take the probability space $\Omega=\{0,1\}^{\Z}$ and write $V(n)(\omega)=\omega_n$. The hopping operator $T$ is of convolution type with
\begin{equation}\label{hopping operator}
	T(m,n)=f(m-n),\ \forall m,n\in\Z
\end{equation}
for some complex-valued function $f:\Z\to\C$. The corresponding Fourier symbol and the Laurent symbol of $T$ are given respectively by
\begin{align}
	\label{Fourier symbol} \hat{f}(\theta)=\sum_{n\in \Z} f(n)  e^{2\pi i n\theta}, \ \theta\in \T; \\
	\label{Laurent symbol} F(z)=\sum_{n\in \Z} f(n) z^n,\ z\in\C_{\times}.
\end{align} 
Here $\mathbb{T}=\mathbb{R}/\mathbb{Z}$ is the one-dimensional torus, and $\mathbb{C}_{\times}=\mathbb{C}\setminus\{0\}$ is the punctured complex plane. In this paper, we mainly study those $T$ satisfying the following two assumptions:
\begin{itemize}
	\item[\textbf{(A1)}] $\hat{f}$ is real-valued;
	\item[\textbf{(A2)}] $\hat{f}$ is non-constant and real-analytic in $\theta\in\mathbb{T}$.
\end{itemize}
It is easy to check that assumption \textbf{(A1)} is equivalent to the self-adjointness of $T$, and assumption \textbf{(A2)} is equivalent to the statement that $T$ is bounded, not a scalar multiple of the identity, and the hopping decays at least exponentially fast, i.e., 
\begin{equation}\label{exp decay hopping}
	|f(n)|\leq C_{T}\, e^{-c_T|n|}
\end{equation}
for some constants $C_T,c_T>0$ depending on $T$. \\

It is well known (see, e.g., \cite[Lemma 2.1.2]{Kri08} and \cite[Theorem 3.9]{Kir08}) that, under the above setting, with probability one we have
\begin{equation}\label{as spectrum}
	\spec(H(\omega)) = \operatorname{range}(\hat{f}) + \{0,\lambda\}.
\end{equation}
Since the localization property of the spectrum is invariant under (real) affine transformations of the operator, without loss of generality we may assume that
\begin{itemize}
	\item[\textbf{(A3)}] $\sup_{\theta\in \mathbb{T}} \hat{f}(\theta) = 1$ and $\inf_{\theta\in \mathbb{T}} \hat{f}(\theta) = 0$.
\end{itemize}
Then \eqref{as spectrum} becomes $[0,1] \cup [\lambda, 1+\lambda]$.

Now denote by $\mathscr{R}$ the class of hopping operators with rational Laurent symbol:
\begin{equation}\label{rational hopping class}
	\mathscr{R}:= \left\{ T : T \text{ satisfies \textbf{(A1)}, \textbf{(A2)} and its Laurent symbol } F(z) \text{ is a rational function} \right\}.
\end{equation}
Our main result is stated as follows:
\begin{thm}\label{Localization near the edge band}
For any $T\in\mathscr{R}$ satisfying \textbf{\textup{(A3)}}, there exists $\delta>0$, depending on $T$ and $\lambda$, such that $H$ exhibits Anderson localization on $[0,\delta]$ almost surely.
\end{thm}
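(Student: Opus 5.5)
The plan is to follow the Bourgain–Kenig / Ding–Smart scheme: an initial-scale estimate near the bottom edge, a deterministic quantitative unique continuation principle that uses the rational structure of $F$, and a multi-scale analysis (MSA) for Bernoulli potentials in which Wegner estimates are obtained by Sperner-type/free-site arguments rather than from regularity of the distribution.

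\textbf{Step 1: unique continuation from rationality.} Write $F=P/Q$ with $P,Q$ Laurent polynomials. Set $Q(z)=\sum_{|k|\le d}q_k z^k$ and let $S_Q$ be the convolution operator with kernel $q_k$, a finite-range operator. For a solution of $(H-E)\psi=0$ on an interval $\Lambda$, applying $S_Q$ gives
\[
S_P\psi+\lambda S_Q(V\psi)-E\,S_Q\psi=0
\]
up to boundary terms coming from $\psi$ outside $\Lambda$. Because $T$ has exponential tails, these boundary terms are exponentially small in the distance to $\partial\Lambda$. This is a finite-order linear recurrence of order $\le 2d'$ with bounded, $V$-dependent coefficients, whose leading coefficient is a nonzero constant. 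It can therefore be propagated by bounded transfer matrices.

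From this I would derive the quantitative statement: if $\psi$ is an approximate eigenfunction on $\Lambda$ of length $L$, then $\psi$ cannot be small on any block of $\gtrsim C$ consecutive sites without being $e^{-CL}$-small everywhere. I would also aim for the Bernoulli-adapted version: on a positive density of sites one has $|\psi(n)|\ge e^{-CL}\|\psi\|$. The constant $C$ must depend only on $T$, $\lambda$ and $E$ in a compact range.

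\textbf{Step 2: initial scale near the edge.} Take energies $E\in[0,\delta]$. By (A3), $\hat f\ge 0$ vanishes only at finitely many points, to finite order by analyticity. Large deviations then show that, with probability $\ge 1-e^{-cL_0}$, the box $\Lambda_{L_0}$ contains many sites with $V=1$ at mutual distances $\ll L_0$. A Lifshitz-tail argument (a Temple/variational bound using $\hat f\ge0$ and the fact that $\lambda V\ge\lambda$ on a dense set) gives
\[
H_{\Lambda_{L_0}}\ge c L_0^{-\kappa},
\]
so that $\Lambda_{L_0}$ is good for $\delta\ll L_0^{-\kappa}$. The Combes–Thomas estimate, valid because the hopping decays exponentially, then yields exponential off-diagonal decay of the Green's function at the initial scale.

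\textbf{Step 3: MSA with a Bernoulli Wegner estimate.} Inductively, bad boxes are sparse. At scale $L_{k+1}$, I would condition on all spins except a set of free sites $\mathcal S$ of positive density, located where the resolvent is controlled. Following Bourgain and Ding–Smart, flipping a free site from $0$ to $1$ moves an eigenvalue $E_j$ near $E$ by at least $\lambda|\psi_j(s)|^2\ge e^{-CL}$; the lower bound comes from the unique continuation of Step 1. Monotonicity in each spin combined with Sperner's lemma then shows that the probability that $H_{\Lambda_{L_{k+1}}}$ has an eigenvalue within $e^{-L^{\beta}}$ of $E$ is at most $L^{-p}$.

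The long-range hopping forces two modifications. Green's function expansions require resolvent identities with exponentially decaying tails. The eigenfunction localization used for the Sperner step also needs the exponential tail bound in Step 1. Once the MSA conclusion holds, namely exponential Green's function decay with probability $1-L^{-p}$ for energies in $[0,\delta]$, the standard argument (Schnol plus energy-interval elimination; cf. \cite{DS20}) yields Anderson localization almost surely.

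\textbf{Main obstacle.} I expect the hardest step to be the quantitative unique continuation of Step 1 in the form the Sperner argument needs. That form is a lower bound $|\psi(s)|\ge e^{-CL}$ on many sites, with $C$ small relative to the MSA scale exponents. The difficulty is that the recurrence of Step 1 has order $2d'>2$. Smallness at a single site does not propagate, so one must show that $\psi$ cannot vanish to high precision on a whole set of size proportional to the order. I would handle this by working with blocks of $2d'$ consecutive sites: a transfer-matrix bound controls growth across blocks, while the Bernoulli randomness is used only to rule out exact degeneracies of the coefficients.
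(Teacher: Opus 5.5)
Your architecture matches the paper's: renormalize by $T_Q$ into a finite-range recurrence, then Bourgain--Kenig free sites, Ding--Smart eigenvalue movement with Sperner, and finally energy elimination. However, two of your steps fail as stated for general $T\in\mathscr{R}$.

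\textbf{Unique continuation (Step 1).} After applying $T_Q$, the recurrence reads $\sum_{|k|\le R}\bigl(p_k+q_k(\lambda V(n-k)-E)\bigr)\psi(n-k)=0$ with $R=d_1\vee d_2$. Its extreme coefficient is the constant $p_R$ only when $d_1>d_2$.
\begin{itemize}
\item When $d_1=d_2$, the extreme coefficient is $q_R\bigl(\lambda V(n-R)-E+p_R/q_R\bigr)$, which is small whenever $\lambda V(n-R)-E$ is close to $-p_R/q_R$.
\item When $d_1<d_2$, it is $q_R(\lambda V(n-R)-E)$, which equals $-q_RE$ at every site with $V(n-R)=0$. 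An example is $F=c(2-z-z^{-1})/(1+\varepsilon(z^2+z^{-2}))$, which satisfies \textbf{(A3)}.
\end{itemize}
So at the band edge the recurrence degenerates \emph{deterministically} on about half the sites. Bernoulli randomness cannot remove this. The paper's counterexample with $P=2-z-z^{-1}$, $Q=1+\varepsilon\sum_{k\le N}(z^k+z^{-k})$ shows that QUC genuinely fails for $\mathscr{R}_0,\mathscr{R}_-$ without a lower bound $b$ on these coefficients. Hence your transfer-matrix argument gives no constant $C$ uniform in $E\in[0,\delta]$.

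The paper fixes this as follows. It runs the MSA only for $E\in[2^{-6000d_T}\delta,\delta]$, where $b\sim\delta$. The resulting loss $(C/b)^{-\widehat D}$ is affordable because UC is applied on intermediate intervals of size $\widehat D\sim\sqrt{D_2D_3}$, and $\log(1/b)\sim\log\log N_{\rm in}$, so the lower bound still beats $e^{-D_2}$. It then covers $(0,\delta]$ by bands $[2^{-6000d_T(j+1)}\delta,2^{-6000d_Tj}\delta]$, each with its own initial scale $N_{\rm in}^{2^j}$.

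You also need the large values of $\psi$ to land on free sites, not merely on some positive-density set. The paper makes the free sites blocks of length $2D_{\rm hop}+1$, so that the $2D_{\rm hop}$-net produced by the cone property meets every block.

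\textbf{Initial scale (Step 2).} Density of $\{V=1\}$ does not give a gap once $\hat f$ has several minima, which $\mathscr{R}$ with \textbf{(A3)} allows. Take $F(z)=\tfrac14(2-z^2-z^{-2})$, with minima at $\theta=0,\tfrac12$, and $V=\mathbf{1}_{\rm odd}$. Then $\{V=1\}$ is a $1$-net, yet $H$ acts on the even sublattice as a free Laplacian with zero potential. The bottom of $H_{\Lambda_{L_0}}$ is therefore $\sim L_0^{-2}$, and $G_{\Lambda_{L_0}}(E)$ has no exponential decay.

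Without using finer structure of $V$, the only bound available is $H_{\Lambda_{L_0}}\gtrsim L_0^{-d_T}$ from the symbol. Any gap $L_0^{-\kappa}$ with $\kappa\ge 1$ gives, via Combes--Thomas, a decay length $\gtrsim L_0$, which is useless as MSA input. What is needed is a gap of order $(\log L_0)^{-C}$ off an event of probability $e^{-(\log L_0)^{C'}}$. Your claimed probability $1-e^{-cL_0}$ is only compatible with allowing gaps of length $\sim L_0$ in $\{V=1\}$.

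The missing idea is this. An approximate ground state has Fourier transform concentrated near \emph{all} minima $\theta_j$, and the potential couples these components through the modulated averages
$$\tfrac{1}{2L'+1}\sum_{l'}e^{-2\pi i l'(\theta_j-\theta_{j'})}\widetilde V(l'+k'(2L'+1))$$
of the centered potential $\widetilde V=1-2V$. The paper uses periodic approximation, Floquet--Bloch and Klopp's uncertainty principle to show that failure of the Neumann-series bound forces one of these averages to exceed $1/(2J^2)$. It then bounds that event by sub-Gaussian concentration. The cross terms $j\ne j'$ are exactly what detect configurations like $\mathbf{1}_{\rm odd}$. The estimate must also be made uniform in the values on the reserved free sites $S_{\rm in}$, so that the Bourgain--Kenig induction can start.
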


We make the following remarks:
\begin{rmk}
(1) Since the Fourier symbol is just the Laurent symbol restricted to the unit circle, i.e., $\hat{f}(\theta)=F(e^{2\pi i \theta})$, it is easy to check that $T\in\mathscr{R}$ is equivalent to the condition that 
\[\hat{f}(\theta)=\frac{p(\cos(2\pi\theta),\sin(2\pi\theta))}{q(\cos(2\pi\theta),\sin(2\pi\theta))}\]
is a rational trigonometric function, where $p$ and $q$ are real polynomials in two variables and $q(\cos(2\pi\theta),\sin(2\pi\theta))$ has no zero on $\mathbb{T}$. In particular, all self-adjoint short-range hopping operators (which include the standard Laplacian on $\mathbb{Z}$) are contained in $\mathscr{R}$.\\
(2) Although $\mathscr{R}$ contains only a portion of hopping operators that decay exponentially fast, if one takes the hopping as 
\begin{equation}\label{experiment hopping}
f(n)=a^{-|n|},\qquad n\in\mathbb{Z}
\end{equation}
with some $|a|>1$ (which is often taken in physical experiments), then the Laurent symbol becomes 
\[F(z)=1+\frac{a}{a-z}+\frac{az}{az-1},\]
and therefore \eqref{experiment hopping} lies in $\mathscr{R}$.\\
(3) The reason we prove the localization result only for $T\in\mathscr{R}$ is that our method relies heavily on a (deterministic) quantitative version of the unique continuation principle, which provides the transversality needed to handle the absence of a Wegner estimate in the Bernoulli potential case. However, such a unique continuation principle does not hold for every hopping that decays exponentially fast (some counterexamples will be given in Section \ref{QUC section}), and in this paper we can only establish it for $\mathscr{R}$. The localization result for more general hopping coupled with Bernoulli potential remains open.
\end{rmk}

\subsection{Organization of the paper}
The paper is organized as follows. In Section \ref{QUC section}, we introduce the unique continuation principle for the operator \eqref{longrange schrodinger operator}, give some counterexamples, and establish the affirmative version \textcolor{red}{\textbf{when the hopping lies in $\mathscr{R}$ or decays super-exponentially}}. In Section \ref{initial scale section}, we adapt the method developed in \cite[Section 2]{LSZ25} (which combines the arguments from \cite{Bou04} with some techniques for studying Lipschitz tails from \cite{Klopp98,Klopp02}) to obtain the initial-scale Green's function estimate; we also analyze the effect of multiple minima on the model. In Section \ref{large scale section}, we combine the free sites argument from \cite{BK05} and the eigenvalue movement estimate from \cite{DS20} to establish the Wegner estimate for our model, and then prove the Green's function estimate for large scales. In Section \ref{localization section}, we eliminate the dependence on $E$ from the previous estimates and establish localization near the spectral edge. Some proofs and useful lemmas are placed in the Appendix at the end of the paper for ease of reading. 
\subsection{Notations}
The  notations used in this paper can be collected as follows. 
\begin{itemize}
\item $C=C(a,b,\cdots)$ means the constant $C$ depends on parameters $a,b,\cdots$.
\item We adopt the Vinogradov symbol $f\lesssim g$ for two nonnegative quantities $f$ and $g$, if there is an absolute constant
$C > 0$ such that $f\leq Cg$. If we want to emphasize that $C$ depends on some parameters
$a,b,\cdots$ independent of $f,g$, then we write $f\lesssim_{a,b,\cdots}g$. We denote $f\sim g$ if  both $f\lesssim g$ and $g\lesssim f$. In some cases, we denote  $f\ll g$  if there is some small  enough $c>0$ independent of $f,g$ so that $f\leq c g.$ 
\item We adapt the Landau symbol $f=\mcO(g)$ to mean that $|f|\lesssim g$ and $f=\mcO_{a,b,\cdots}(g)$ to mean that $|f|\lesssim_{a,b,\cdots} g$. Moreover, if additionally $f$ is positive, we will write $f=\mcO_+(g)$ or $f=\mcO_{+;a,b.\cdots}(g)$. We also write $f=o(g)$ to mean $|f|\ll g$.
\item For any $a,b\in\R$, denote $a\wedge b=\min\{a,b\}$ and $a\vee b=\max\{a,b\}$. 
\item  Denote by $\#$ the cardinality of a set. In particular, when applied to a subset of the spectrum, for instance, $\#(\spec(H)\cap A)$, it means  the cardinality counting multiplicities. 
\item We use $|\cdot|$ to denote the $\ell^{\infty}$-norm and $|\cdot|_1$ the $\ell^1$-norm for vectors in $\Z^d$ or $\R^d$. Moreover, we write $\Lambda_L(n)=\{m\in \Z^d : |m-n|\le L\}$ and set $\Lambda_L = \Lambda_L(0)$. In particular, in the one-dimensional case $\Z$, the set $\Lambda_L(n)$ is simply the interval $[n-L,n+L]\cap \Z$, which we call an $L$-interval and still denote by $[n-L,n+L]$ if no misunderstanding.
  \item The notation $\| \cdot \|$, if has no misunderstanding, will denote the $L^2(\T)$ or $\ell^2(\Z^d)$ norm or the corresponding operator norm. $\langle \cdot, \cdot \rangle$ denotes  the standard inner product on each space.
  \item We denote the distance on the $d$-dimensional torus $\T^d$ by $\| \cdot \|_{\T^d} =\max_{1\leq i\leq d} \min_{k\in\Z} |\cdot_i-k|$. 
  \item Denote by $\Delta_{\rm free}:\ell^2(\Z^d)\rightarrow \ell^2(\Z^d)$ the free Laplacian on $\Z^d$, i.e. 
         \[\Delta_{\rm free} u (n)= \sum_{m:|m-n|_1=1} \left(u(n)-u(m)\right).\] 
 \item For a subset $\Lambda\subset \Z^d$, we define its boundaries to be  
\[\partial^{-}\Lambda =\{y\in \Lambda: \ \exists  x\notin \Lambda\ {\rm such\ that} \ x\sim y\},\]
\[\partial^{+}\Lambda =\{y \notin \Lambda: \ \exists  x\in \Lambda\ {\rm such\ that} \ x\sim y\}.\]
  \item For a subset $\Lambda \subset \mathbb{Z}^d$, let $R_{\Lambda}$ denote the restriction operator  on  $\ell^2(\Lambda)$. The restriction of $H$ to $\Lambda$ with Dirichlet boundary condition  is then given by  $H_{\Lambda} = R_{\Lambda} H R_{\Lambda}$. We also denote $V_{\Lambda}=R_{\Lambda}V R_{\Lambda}$.
\end{itemize}

\section{The quantitative unique continuation principle}\label{QUC section}
First, we recall the quantitative unique continuation principle for Schr\"odinger operator on $\R^d$:

\begin{thm}\label{UC in Rd}
\textup{(\cite[Lemma 3.10]{BK05})} Assume $\Delta u + V u = \gamma$ in $\mathbb{R}^d$ satisfies
\[u(0)=1,\quad \|u\|_{\infty} \le C,\quad \|V\|_{\infty} \le C.\]
Then the decay of $u$ obeys the following lower bound:
\[\sup_{|x-x_0|\le 1} |u(x)| + \|\gamma\|_{\infty} \;\gtrsim\; \exp\Bigl\{-c\, (\log |x_0|)\cdot |x_0|^{4/3}\Bigr\}, \ \forall\, |x_0|>1,\]
for some constant $c>0$.
\end{thm}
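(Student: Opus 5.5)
This is the Bourgain--Kenig quantitative unique continuation lemma, recalled verbatim from \cite[Lemma 3.10]{BK05}. The paper only cites it, but if I had to prove it I would follow their Carleman-estimate argument.

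\textbf{Reduction.} Let $R=|x_0|$ and assume $\|\gamma\|_\infty$ is smaller than the claimed bound, since otherwise there is nothing to prove. The task is then to show that $u$ cannot be tiny on the unit ball $B(x_0,1)$.

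\textbf{Main tool: a Carleman estimate.} I would use one with a weight of the form $e^{\tau\phi}$, with $\phi$ radial and $\phi(r)\approx \log r$ convexified (e.g. $\phi(r)=\log r + (\log r)^2/\tau$-type corrections). The estimate should read
\[
\tau^{3}\!\int |x|^{-4}\phi\, e^{2\tau\phi}|w|^2 \lesssim \int e^{2\tau\phi}|\Delta w|^2
\]
for compactly supported $w$ avoiding the origin and $\tau$ large. The key feature is that $\tau$ enters with a power ($\tau^{3/2}$ in the square root) beating $\tau$. This is what allows absorbing a potential term $Vw$ whenever $\tau^{3/2}\gtrsim \|V\|_\infty R^2$ on the relevant scale.

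\textbf{Steps.}

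\textbf{Step 1 (rescaling).} Rescale $x\mapsto Rx$ so that $x_0$ lands on the unit sphere. The equation becomes $\Delta \tilde u + R^2 \tilde V\tilde u = R^2\tilde\gamma$, so the rescaled potential has size $M=R^2$. The small ball $B(x_0,1)$ becomes a ball of radius $1/R$.

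\textbf{Step 2 (Carleman with the potential absorbed).} Apply the Carleman estimate to $w=\chi\tilde u$. Here $\chi$ is a cutoff that equals one on an annulus between radius $\sim 1/R$ around $x_0$ and the region near $0$ where $|u(0)|=1$; the weight is centered at $x_0$ so that $u(0)$ sits in the "large weight" region. Absorbing $M|\tilde u|$ requires $\tau\gtrsim M^{2/3}=R^{4/3}$; this is exactly the origin of the exponent $4/3$.

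\textbf{Step 3 (comparing the two sides).} The commutator terms $[\Delta,\chi]\tilde u$ live in two places:
\begin{itemize}
\item near the small ball, where they are controlled by $\sup_{B(x_0,1)}|u|$ via interior elliptic (Caccioppoli) estimates;
\item far out, where $\|u\|_\infty\le C$ bounds them.
\end{itemize}
The left side is bounded below near $0$ using $u(0)=1$ together with Harnack/elliptic regularity: $|u|\gtrsim 1$ on a ball of radius $\sim 1/C$ around $0$. Comparing weights with $\tau\sim R^{4/3}$ gives
\[
1\lesssim e^{C\tau\log R}\Bigl(\sup_{B(x_0,1)}|u|+\|\gamma\|_\infty\Bigr)+e^{-c\tau}\cdot C.
\]
Choosing the weight so that the outer region has strictly smaller weight than the region near $0$ kills the last term. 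This yields the bound $\exp\{-cR^{4/3}\log R\}$; the $\log R$ factor comes from the weight ratio between radius $1/R$ and radius $1$ under $\phi\approx\log r$.

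\textbf{Main obstacle.} The hardest step is the Carleman estimate with the sharp $\tau^{3/2}$ gain. This needs a carefully chosen convex perturbation of $\log|x|$ and an integration-by-parts computation in polar coordinates (the Escauriaza--Vessella / Kenig type estimate). I would take it from the literature rather than rederive it. The lower-order gradient terms are handled by interior elliptic estimates, which are standard.
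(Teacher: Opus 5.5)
Your outline is the Bourgain--Kenig Carleman argument itself. The paper gives no proof of its own and simply cites \cite[Lemma 3.10]{BK05}. The structure is right: rescale so the potential has size $R^2$, take $\tau\sim R^{4/3}$ to absorb it via the $\tau^{3}$ gain, cut off near $x_0$ and far out, and read off $R^{4/3}\log R$ from the weight ratio between radius $1/R$ and radius $1$.

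One thing must be fixed: the weight has to be singular at $x_0$ and decreasing in $r=|x-x_0|$. Bourgain--Kenig use
\[
\alpha^3\int w^{-1-2\alpha}f^2\le C\int w^{2-2\alpha}|\Delta f|^2, \qquad f\in C_0^\infty(B(0,10)\setminus\{0\}),
\]
where $w=w(r)$ is increasing and comparable to $r$ (here $w$ is the weight, not your test function). In your notation this is $e^{\tau\phi}$ with $\phi\approx-\log r$. With $\phi\approx+\log r$, as you wrote it, the outer cutoff region, where you only know $|u|\le C$, would carry the largest weight, and Step~3 would collapse. Your Step~3 inequality, with the factor $e^{C\tau\log R}$ on the inner term, already presupposes the decreasing weight, so only the statement of the Carleman estimate needs correcting.

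Two smaller fixes:
\begin{itemize}
\item Delete the stray factor $\phi$ on the left of your Carleman inequality. It would vanish at $r=1$, exactly where you need the lower bound.
\item Replace Harnack, which is unavailable for sign-changing $u$, by the elliptic gradient bound $|\nabla u|\lesssim C$. That bound gives $|u|\ge\frac12$ on a ball of fixed radius about $0$, which is all you need.
\end{itemize}
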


Analoguely, we ask whether the following generalization of Theorem \ref{UC in Rd} hold for the operator like \eqref{longrange schrodinger operator} on $\Z$:
\begin{Def}[QUC]
Assume that the hopping operator $T$ satisfies \textbf{\textup{(A1)}} and \textbf{\textup{(A2)}}. We say that $T$ has the \underline{quantitative unique continuation property (QUC)} if, for any bounded real potential $V$ with $\|V\|_{\infty}\le B$, the solution of
\begin{equation}\label{equation in QUC}
	 (T+V)u = 0 \quad \text{on } \mathbb{Z}, \quad u(0)=1,
\end{equation}
does not decay faster than an exponential on a set of \textbf{full dimension}. More precisely, there exist constants $C=C(B,T)>1$, $\epsilon=\epsilon(T)>0$, and $L_0=L_0(T)\ge 1$ such that for all $L\ge L_0$,
\begin{equation}\label{transversality on a one-dim set}
\#\Bigl\{ x\in[-L,L] : |u(x)| > C^{-L} \Bigr\} \;\ge\; \epsilon L.
\end{equation}
\end{Def}

Such a definition is reasonable, because when $Tu(n)=2u(n)-u(n-1)-u(n+1)$ is the free Laplacian on $\mathbb{Z}$, the so-called ``cone property'' (cf. \cite[Section 2.1]{LSZ26}) ensures that the free Laplacian has the QUC with parameters $C=3+B$, $\epsilon=1/2$, and $L_0=1$ (see \cite[Theorem 2.3]{LSZ26}). 

However, in higher dimensions, even the free Laplacian fails to satisfy the QUC, since there exists a famous counterexample in $\Z^2$ (see \cite[Theorem 2]{Jit07}) whose solution only exhibits slow decay on a one-dimensional subset (and not on a full-dimensional subset of $\mathbb{Z}^2$). But if one restrict to the case $V\equiv 0$, \eqref{transversality on a one-dim set} does hold (with $\epsilon L$ replaced by $\epsilon L^2$ and $[-L,L]$ replaced by $\Lambda_L$) for harmonic functions on $\Z^2$ by the results in \cite{BLMS}.

For the long-range hopping case on $\mathbb{Z}$, also, the QUC \textbf{does not} hold for every hopping operator $T$, even when $T$ is of elliptic type with exponential decay and even when $V\equiv 0$. This can be seen from the following counterexample, which also reveals that the QUC is indeed connected to the zeros of the hopping's Laurent symbol.

\begin{ctex}\label{oscillation counterexample}
The following example shows that when the Laurent symbol of $T$ has violent oscillation and many zeros, the QUC fails because there are too many linearly independent solutions of $Tu=0$.

Take $B(z)=\sin \frac{1}{z-1/2}$. Let $T$ have the Laurent symbol
\begin{equation}\label{oscillation counterexample Laurent symbol}
	F(z)=(2-z-z^{-1})\cdot B(z)\cdot B(z^{-1}).
\end{equation}
One can easily check that the function \eqref{oscillation counterexample Laurent symbol} has the following properties:
\begin{itemize}
	\item $F(z)$ is holomorphic in $\{\frac{1}{2}<|z|<2\}$; consequently, the hopping decays exponentially with $|f(n)|\lesssim (2-)^{-|n|}$;
    \item We have $f(n)\in\mathbb{R}$; i.e., all entries of $T$ are real. This is because each of the functions $\sin z$, $2-z-z^{-1}$, $\frac{1}{z-1/2}$, and $\frac{1}{z}$ has a real Laurent expansion, and the class of functions with real Laurent coefficients is closed under multiplication and composition;
	\item The zeros of $F(z)$ are as follows: $z=1$ with multiplicity $2$ (coming from $2-z-z^{-1}$); $z=\frac{1}{2}+\frac{1}{\pi n},\ n\in\mathbb{Z}$ with multiplicity $1$ (coming from $B(z)$); $z=(\frac{1}{2}+\frac{1}{\pi n})^{-1},\ n\in\mathbb{Z}$ with multiplicity $1$ (coming from $B(z^{-1})$);
	\item $z=2$ and $z=\frac{1}{2}$ are two essential singularities of $F(z)$;
	\item On the unit circle, the corresponding Fourier symbol is given by 
	  \[\hat{f}(\theta)=F(e^{2\pi i \theta})= (2-2\cos (2\pi\theta))\cdot |B(e^{2\pi i\theta})|^2\geq 0,\]
	  which is real and analytic on $\T$. The minimum is attained uniquely at $\theta=0$, and we have
	  \[\hat{f}(\theta)=4\pi^2 (\sin 2)^2 \cdot \theta^2+ \mcO(\theta^4) \quad {\rm near }\ \theta=0.\]
      This reveals that $T$ is just like the free Laplacian and is an elliptic operator from the perspective of the Fourier symbol.
\end{itemize}
Now we study equation \eqref{equation in QUC} with $V\equiv 0$, i.e., 
\begin{equation}\label{equ in oscillation counterexample}
Tu = 0 \quad \text{on } \mathbb{Z}, \quad u(0)=1.
\end{equation}
Denote by $\lambda_j = \bigl(\frac{1}{2}+\frac{1}{\pi j}\bigr)^{-1} > 1$, $j\ge 1$, the zeros of $F(z)$. For each $j\ge 1$, set 
\[u_j(n)=\lambda_j^n, \ \forall n\in \Z.\]
Then $u_j(0)=1$, and a simple computation shows that 
\begin{equation}\label{uj is a solution}
	Tu_j (m)=\sum_{k\in \Z}f(k)u_j(m-k)=\lambda_j^m \cdot \sum_{k\in \Z}f(k)\lambda_j^{-k}=\lambda_j^m \cdot F(\lambda_j^{-1}).
\end{equation}
Since $\lambda_j^{-1}$ is also a zero of $F(z)$, the right-hand side of \eqref{uj is a solution} vanishes; hence $u_j$ is a solution of \eqref{equ in oscillation counterexample} for each $j\ge 1$. Now, for any $L\geq 1$, let $c_1,c_2,\dots,c_{2L+1}$ be chosen such that 
\[u^{(L)}=\sum_{1\leq j\leq 2L+1}c_j u_j\]
satisfies
\begin{equation}\label{equation of cj}
	u^{(L)}(0)=1;\ u^{(L)}(n)=0,\ \forall 1\leq |n|\leq L. 
\end{equation}
Such coefficients $c_1,c_2,\dots,c_{2L+1}$ exist because the conditions \eqref{equation of cj} can be written as a linear system
\[M \mathbf{c}=\mathbf{b},\ \mathbf{c}=(c_1,\cdots,c_{2L+1})^{\top}, \ \mathbf{b}=(0,\cdots,0,1,0,\cdots,0)^{\top} \]
where the only nonzero entry of $\mathbf{b}$ is $b_{L+1}=1$. The coefficient matrix $M$ is given by 
\[
M = \begin{pmatrix}
\lambda_1^{-L} & \lambda_1^{-L+1} & \cdots & \lambda_1^{L} \\
\lambda_2^{-L} & \lambda_2^{-L+1} & \cdots & \lambda_2^{L} \\
\vdots & \vdots & \ddots & \vdots \\
\lambda_{2L+1}^{-L} & \lambda_{2L+1}^{-L+1} & \cdots & \lambda_{2L+1}^{L}
\end{pmatrix}.
\]
By the theory of Vandermonde determinants, we have
\[\det M=\prod_{1\leq j\leq 2L+1} \lambda_j^{-L}\cdot \prod_{1\leq i<j\leq 2L+1}(\lambda_j-\lambda_i)\neq 0,\]
hence the coefficients $c_1,\dots,c_{2L+1}$ can be solved as $\mathbf{c}=M^{-1}\mathbf{b}$. We have constructed a function $u^{(L)}$ satisfying \eqref{equ in oscillation counterexample} and \eqref{equation of cj} for each $L\geq 1$. This implies that
\[\#\Bigl\{ x\in[-L,L] : |u^{(L)}(x)| \neq 0 \Bigr\} = 1.\]
Therefore, \eqref{transversality on a one-dim set} cannot hold for any choice of $C$, $\epsilon$, and $L_0$, and hence $T$ fails to satisfy the QUC.

\end{ctex}

The counterexample \ref{oscillation counterexample} inspires us that, if one expects $T$ to have the QUC, it might be necessary to avoid the occurrence of essential singularities (including $\infty$) of the Laurent symbol. Thus it is natural to ask whether $T$ has the QUC when its Laurent symbol is a rational function, since a well-known result in complex analysis states that \textbf{\textcolor{red}{a meromorphic function on the Riemann sphere $\mathbb{C}\cup\{\infty\}$ must be rational}}.\footnote{We remark that there might be some other strange functions that have no essential singularities but possess a natural boundary; we do not study them in this paper either. In this sense, rational functions are exactly the whole class of functions we can investigate.} Fortunately, the QUC actually holds for rational functions, and we will discuss this in the following.


Recall the definition of $\mathscr{R}$ in \eqref{rational hopping class}. For each $T\in\mathscr{R}$, a simple algebraic argument shows that assumption \textbf{(A1)} allows us to \textbf{uniquely} represent the Laurent symbol of $T$ by
\begin{equation}\label{represent mathscrR}
	F(z) = \frac{P(z)}{Q(z)}, 
\end{equation}
where
\begin{equation*}
P(z) = \sum_{|k|\le d_1} p_k z^k,\quad 
Q(z) = \sum_{|k|\le d_2} q_k z^k
\end{equation*}
are Laurent polynomials satisfying
\begin{equation}\label{parameter relationship in the representation}
\begin{cases}
&\text{$P$ and $Q$ are coprime;}\\
&p_0,q_0\in\mathbb{R};\\
&p_{d_1}\neq 0,\ q_{d_2}\neq 0,\ 0\leq \arg(p_{d_1})<\pi;\\
&p_k = \overline{p_{-k}},\ \forall |k|\le d_1;\ q_k = \overline{q_{-k}}, \ \forall |k|\le d_2.
\end{cases}
\end{equation}
Therefore, all the parameters above essentially depend on $T$. Moreover, if we denote
\begin{equation}\label{max deg of P,Q}
	D_{\hop}(T)=d_1 \vee d_2,
\end{equation}
then the non-constant assumption \textbf{(A2)} will ensure that $D_{\hop}(T)\geq 1$.

Under the above representation, we can further classify the hoppings in $\mathscr{R}$ as
\[\mathscr{R}=\mathscr{R}_+\cup \mathscr{R}_0\cup \mathscr{R}_-,\]
where
\begin{align*}
	\mathscr{R}_+ &  := \left\{ T : T\in \mathscr{R} \ {\rm and } \ d_1>d_2 \right\};\\
		\mathscr{R}_0 &  := \left\{ T : T\in \mathscr{R} \ {\rm and } \ d_1=d_2 \right\};\\
			\mathscr{R}_- &  := \left\{ T : T\in \mathscr{R} \ {\rm and } \ d_1<d_2 \right\}.
\end{align*}
We have the following result:
\begin{thm}[QUC for $\mathscr{R}$]\label{QUC for rational class}
	For each $T\in \mathscr{R}$, under the representation \eqref{represent mathscrR} for $T$,
	\begin{itemize}
		\item[(1)] if $T\in \mathscr{R}_+$, then $T$ has the QUC;
		\item[(2)] if $T\in \mathscr{R}_0$ and we additionally assume that the potential $V$ satisfies the lower bound condition 
		            \begin{equation}\label{lower bound on potential, R0}
						\inf_{n\in \Z} \left|V(n)+\frac{p_{d_1}}{q_{d_1}} \right|\geq b,
					\end{equation}
					then $T$ has the QUC with \eqref{transversality on a one-dim set} replaced by 
					\begin{equation}\label{transversality on a one-dim set, scrR0}
                       \#\Bigl\{ x\in[-L,L] : |u(x)| > (b^{-1}C)^{-L} \Bigr\} \;\ge\; \epsilon L,\ \forall L\geq L_0;
                    \end{equation}
	    \item[(3)] if $T\in \mathscr{R}_-$ and we additionally assume that the potential $V$ satisfies the lower bound condition 
		            \begin{equation}\label{lower bound on potential, R-}
						\inf_{n\in \Z} \left|V(n) \right|\geq b,
					\end{equation}
					then $T$ has the QUC with \eqref{transversality on a one-dim set} replaced by 
					\begin{equation}\label{transversality on a one-dim set, scrR-}
                       \#\Bigl\{ x\in[-L,L] : |u(x)| > (b^{-1}C)^{-L} \Bigr\} \;\ge\; \epsilon L,\ \forall L\geq L_0.
                    \end{equation}

	\end{itemize}
\end{thm}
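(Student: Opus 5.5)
The plan is to turn the infinite-range equation \eqref{equation in QUC} into a finite-order linear recurrence by clearing the denominator of the Laurent symbol. After that, a one-dimensional ``cone'' (transfer-matrix) argument, as for the free Laplacian, gives the count.

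\emph{Step 1: reduction to a recurrence.} Let $S$ be the shift $(Su)(n)=u(n-1)$. By \eqref{represent mathscrR} and \textbf{(A2)}, $Q$ has no zeros on $|z|=1$ (otherwise $F$ would have a pole there, since $P,Q$ are coprime). So $Q(S)$ is invertible, and $T=Q(S)^{-1}P(S)$ as convolution operators: the convolution kernel of $Q(S)^{-1}$ is given by the Laurent coefficients of $1/Q$ on an annulus around the unit circle. We take $u$ bounded, or at least of subexponential growth below the decay rate $c_T$ in \eqref{exp decay hopping}; this is the standing setting in which $Tu$ is defined. Then all sums converge absolutely, and Fubini justifies applying $Q(S)$ to \eqref{equation in QUC}, giving $Q(S)Tu=P(S)u$. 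Hence $u$ satisfies, for every $n\in\Z$ and with $D=D_{\hop}(T)$,
\[
\sum_{|k|\le d_1} p_k\, u(n-k)\;+\;\sum_{|k|\le d_2} q_k\, V(n-k)\,u(n-k)\;=\;0 .
\]
This is a recurrence relating the $2D+1$ values $u(n-D),\dots,u(n+D)$.

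\emph{Step 2: nondegeneracy of the extreme coefficients.} The coefficient of $u(n+D)$, i.e.\ of the term $k=-D$, is as follows in the three cases.
\begin{itemize}
\item[(1)] For $T\in\mathscr{R}_+$ it is $p_{-d_1}=\overline{p_{d_1}}\neq0$.
\item[(2)] For $T\in\mathscr{R}_0$ it is $\overline{p_{d_1}}+\overline{q_{d_1}}V(n+D)$. Since $V$ is real, its modulus is $|q_{d_1}|\,\bigl|V(n+D)+p_{d_1}/q_{d_1}\bigr|\ge |q_{d_1}|\,b$ by \eqref{lower bound on potential, R0}.
\item[(3)] For $T\in\mathscr{R}_-$ it is $\overline{q_{d_2}}V(n+D)$, of modulus at least $|q_{d_2}|\,b$ by \eqref{lower bound on potential, R-}.
\end{itemize}
The coefficient of $u(n-D)$ is treated identically, using $p_{d_1}$ and $q_{d_2}$. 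All other coefficients are bounded by $C(T)(1+B)$. Therefore $u(n+D)$ is a linear combination of the $2D$ preceding values with coefficients of size at most $C_0=C(T)(1+B)$ in case (1), and at most $b^{-1}C_0$ in cases (2) and (3). The same holds backwards.

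\emph{Step 3: cone argument and counting.} Let $M_j=\max\{|u(m)|: m\in[j,j+2D-1]\}$. By Step 2, $M_{j+1}\le (2D C_0 b^{-1}\vee 1)\,M_j$ and likewise $M_{j-1}\le (\cdots) M_j$. Since $u(0)=1$, we have $M_j\ge1$ for the window containing $0$. Iterating in both directions gives $M_j\ge C^{-|j|}$ in case (1), and $M_j\ge (b^{-1}C)^{-|j|}$ in cases (2) and (3), with $C=C(B,T)$.

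Now partition $[-L,L]$ into about $L/D$ disjoint blocks of length $2D$. Each block contains a point with $|u|>C^{-L}$ (resp.\ $(b^{-1}C)^{-L}$) after enlarging $C$ slightly. This gives \eqref{transversality on a one-dim set}, \eqref{transversality on a one-dim set, scrR0} and \eqref{transversality on a one-dim set, scrR-}, with $\epsilon=1/(2D)$ and $L_0$ depending only on $D$.

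\emph{Main obstacle.} The delicate point is Step 1: the identity $Q(S)Tu=P(S)u$ must be justified for the class of $u$ under consideration, so that the reduction to the recurrence is legitimate. One must also check that the normalization \eqref{parameter relationship in the representation} really yields the claimed leading coefficients $p_{\pm d_1}$ and $q_{\pm d_2}$. Once that is in place, the rest is the standard bounded-transfer-matrix argument.
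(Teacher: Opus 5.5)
Your proposal is correct and follows essentially the same route as the paper: apply the finite-range operator $T_Q$ to get the recurrence $\sum_{|k|\le D}\bigl(p_k+q_kV(n-k)\bigr)u(n-k)=0$ with $D=D_{\hop}(T)$. The extreme coefficient is then $p_{d_1}$, $q_{d_1}\bigl(V+p_{d_1}/q_{d_1}\bigr)$ or $q_{d_2}V$ in the three cases, and a cone argument propagates the lower bound. The differences are only in bookkeeping. The paper uses just the one-sided bound $\max_{0<k\le 2D}|u(n+k)|\ge C^{-1}|u(n)|$ and counts along a chain $x_j$ in $[0,L]$, while you propagate window maxima in both directions over $[-L,L]$. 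You also justify $T_QTu=T_Pu$, which the paper takes for granted; since $T_Q$ is finite-range, this identity holds whenever $Tu$ converges absolutely, so no extra growth assumption on $u$ is needed.
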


We emphasize that the lower bound conditions \eqref{lower bound on potential, R0} and \eqref{lower bound on potential, R-} are necessary, because otherwise we have the following counterexample:
\begin{ctex}
	Let
	\begin{equation}\label{laurent symbol of rational counterexample}
			P(z)=2-z-z^{-1}, \ Q(z)=1+\varepsilon \cdot \sum_{k=1}^{N}(z^k+z^{-k}), 
	\end{equation}
    where $N\ge 1$ is an integer and $0<\varepsilon<\frac{1}{4N}$. Then on $\{|z|=1\}$ we have
	\[|Q(z)|\geq 1-\varepsilon\cdot \sum_{k=1}^{N}(|z|^{k}+|z|^{-k})=1-2N\varepsilon\geq \frac{1}{2},\]
    so $Q(z)$ has no zero. Let $T$ have the Laurent symbol $F(z)=P(z)/Q(z)$. Take	
    \[
      u(k)=q_k=
        \begin{cases}
              1, & \text{if } k=0;\\
              \varepsilon, & \text{if } 1\le |k|\le N;\\
              0, & \text{otherwise}.
              \end{cases}
     \]
	Then $Tu(k)=\sum_{m\in\mathbb{Z}} T(k-m)u(m)$ is exactly the Laurent coefficient of $F(z)\cdot Q(z)=P(z)$, i.e.,
            \[
        Tu(k)=p_k=2\delta_{0,k}-\delta_{1,k}-\delta_{-1,k}, \ \forall k\in \Z,
         \]
      where $\delta_{m,n}$ is the Kronecker delta. Therefore, if we take the potential as 
	  \begin{equation}\label{potential of rational counterexample}
			   V(k)=-2\delta_{0,k}+\frac{1}{\varepsilon}\delta_{1,k}+\frac{1}{\varepsilon}\delta_{-1,k},\ \forall k\in \Z,
	  \end{equation}
       then we have $(T+V)u = 0$ on $\mathbb{Z}$, $u(0)=1$, and $\|V\|_{\infty} \le \frac{1}{\varepsilon}=B$. Hence $u$ is a solution of \eqref{equation in QUC}. However, for any sufficiently large $L \ge N$, we only have
	  \[
	                        \#\Bigl\{ x\in[-L,L] : |u(x)| \neq 0 \Bigr\} =2N+1. 
	  \]
	  Therefore \eqref{transversality on a one-dim set} fails for such $T$ and $V$. If one investigates our choices \eqref{laurent symbol of rational counterexample} and \eqref{potential of rational counterexample}, one finds the following:
	  \begin{itemize}
		\item If $N=2$, then $T\in \mathscr{R}_0$ with $d_1=d_2=2$ and $p_2/q_2 = -1/\varepsilon$. Consequently,
		         \[\inf_{n\in \Z} \left|V(n)-\frac{p_2}{q_2} \right|=\inf_{n\in \Z} \left|V(n)+\frac{1}{\varepsilon} \right|=0,\]
			which exactly violates \eqref{lower bound on potential, R0}.
		\item If $N>2$, then $T\in \mathscr{R}_-$. Thus
		     \[\inf_{n\in \Z}\left| V(n) \right|=0,\]
			which exactly violates \eqref{lower bound on potential, R-}.
	  \end{itemize}
	  Thus this counterexample shows that the lower bound conditions \eqref{lower bound on potential, R0} and \eqref{lower bound on potential, R-} are necessary.
\end{ctex}

\begin{proof}[Proof of Theorem \ref{QUC for rational class}]
	We take the representation as in \eqref{represent mathscrR}. Let $T\in\mathscr{R}$ have the Laurent symbol $F(z)=P(z)/Q(z)$. Clearly, $P(z)$ and $Q(z)$ are the Laurent symbols of the following short-range hopping operators $T_P$ and $T_Q$, respectively:
    \[T_P u(n)=\sum_{m\in\mathbb{Z}} p_{n-m} u(m),\quad T_Q u(n)=\sum_{m\in\mathbb{Z}} q_{n-m} u(m).\]
    Moreover, we have $T_Q  T = T_P$. Now if $u$ is a solution of \eqref{equation in QUC}, then
    \begin{equation}\label{convolution renormalization equation}
         T_Q (T+V)u = T_P u + T_Q(V u) = 0.
    \end{equation}
    Pointwise, equation \eqref{convolution renormalization equation} reads, for each $n\in\mathbb{Z}$,
    \begin{equation}\label{pointwise convolution renormalization equation}
            \sum_{|k|\leq D_{\hop}(T)} \bigl(p_k + q_k V(n-k)\bigr) u_{n-k} = 0,
    \end{equation}
	where $D_{\hop}(T)$ is defined in \eqref{max deg of P,Q}. For simplicity, in the rest of the proof we denote $R = D_{\mathrm{hop}}(T)$.\\
	
	\noindent (1)  When $T\in\mathscr{R}_+$, we have $R=d_1$ and, by \eqref{parameter relationship in the representation},
	\[p_R+q_{R}V(n-R)=p_{R}\neq 0.\]
    Therefore, \eqref{pointwise convolution renormalization equation} together with $\|V\|_{\infty}\le B$ yields
	\begin{align}\label{pigeonhole in R+}
	\notag	|u(n-R)|&= \left|-\frac{1}{p_R }   \left(  \sum_{-R\leq k< R}(p_k+q_k V(n-k)) u(n-k)  \right)   \right| \\
		 \notag       &\leq \frac{1}{|p_R|} \sum_{-R\leq k< R} \left(|p_k|+|q_k|\cdot | V(n-k)| \right) \cdot |u(n-k)| \\
				&  \leq \left( \frac{1}{|p_R|} \sum_{-R\leq  k< R} \left(|p_k|+|q_k|\cdot B \right) \right) \cdot \max_{-R\leq k< R}|u(n-k)|.
	\end{align}
	Define the constant depending only on $T$ and $B$ as
	\[ C=C(T,B)=\left( \frac{1}{|p_R|} \sum_{-R\leq  k< R} \left(|p_k|+|q_k|\cdot B \right) \right)\vee 1. \]
	Thus, \eqref{pigeonhole in R+} implies that
	\begin{equation}\label{right cone prop for R+}
		\max_{0<k\leq 2R}|u(n+k)|\geq C^{-1}\cdot |u(n)|
	\end{equation}
	holds for each $n\in \Z$. Since $u(0)=1$, \eqref{right cone prop for R+} ensures the existence of an increasing integer sequence $x_0=0 < x_1 < x_2 < \cdots < x_j < \cdots$ such that
	\[|x_{j+1}-x_j|\leq 2R,\ |u(x_{j+1})|\geq C^{-1}|u(x_j)|\geq C^{-j}|u(0)|\geq C^{-|x_j|}.\]
    Therefore, if we take $L_0 = 2R$, then for any $L \ge L_0$ we have $x_{\lfloor L/(2R)\rfloor} \le L$, and consequently
	\begin{equation}\label{right direction UC for R+}
		     \#\Bigl\{ x\in[0,L] : |u(x)| > C^{-L} \Bigr\} \;\ge\; \lfloor \frac{L}{2R}\rfloor +1\geq \frac{L}{2R}, \ \forall L\geq L_0.
	\end{equation}
	Since $[0,L] \subset [-L,L]$ and $R=D_{\hop}(T)$, \eqref{right direction UC for R+} proves that \eqref{transversality on a one-dim set} in the definition of QUC holds if we take $L_0 = L_0(T) = 2D_{\mathrm{hop}}(T)$ and $\epsilon = \epsilon(T) = \frac{1}{2D_{\mathrm{hop}}(T)}$.\\

	\noindent (2) When $T\in \mathscr{R}_0$, we have $R=d_1=d_2$ and, by \eqref{parameter relationship in the representation} and \eqref{lower bound on potential, R0},
	\[p_R+q_{R}V(n-R)=q_{R} \left(V(n-R)+\frac{p_R}{q_R} \right)\neq 0.\]
   The proof of this case is similar to (1). From \eqref{lower bound on potential, R0}, \eqref{pointwise convolution renormalization equation} and $\|V\|_{\infty} < B$, we obtain
		\begin{align}\label{pigeonhole in R0}
	\notag	|u(n-R)|&= \left|-\frac{1}{q_{R} \left(V(n-R)+\frac{p_R}{q_R} \right) }   \left(  \sum_{-R\leq k< R}(p_k+q_k V(n-k)) u(n-k)  \right)   \right| \\		
				&  \leq \left( \frac{1}{|q_R|} \sum_{-R\leq  k< R} \left(|p_k|+|q_k|\cdot B \right) \right) \cdot\frac{1}{b} \max_{-R\leq k< R}|u(n-k)|.
	\end{align}
	Define the constant depending only on $T$ and $B$ as
	\[ C=C(T,B)=\left( \frac{1}{|q_R|} \sum_{-R\leq  k< R} \left(|p_k|+|q_k|\cdot B \right) \right)\vee 1. \]
	Thus, \eqref{pigeonhole in R0} implies that
	\begin{equation}\label{right cone prop for R0 and R-}
		\max_{0<k\leq 2R}|u(n+k)|\geq C^{-1}b\cdot |u(n)|
	\end{equation}
	holds for each $n\in \Z$. Then, by an argument similar to the one used to prove \eqref{right direction UC for R+}, we see that \eqref{right cone prop for R0 and R-} yields
	\begin{equation}\label{right direction UC for R0 and R-}
		     \#\Bigl\{ x\in[0,L] : |u(x)| > (b^{-1}C)^{-L} \Bigr\} \;\ge\;  \epsilon L, \ \forall L\geq L_0.
	\end{equation}
    with $L_0 = L_0(T) = 2D_{\mathrm{hop}}(T)$ and $\epsilon = \epsilon(T) = \frac{1}{2D_{\mathrm{hop}}(T)}$, and hence \eqref{transversality on a one-dim set, scrR0} follows automatically from $[0,L]\subset[-L,L]$.\\

	\noindent (3) When $T\in \mathscr{R}_-$, we have $R=d_2$ and, by \eqref{parameter relationship in the representation} and \eqref{lower bound on potential, R-},
	\[p_R+q_{R}V(n-R)=q_{R} V(n-R) \neq 0.\]
  The rest of the proof is completely the same as in case (2), and one can still obtain \eqref{right cone prop for R0 and R-} and \eqref{right direction UC for R0 and R-} in this case.
\end{proof}

\begin{rmk}
(1) The pointwise equation \eqref{pointwise convolution renormalization equation} reveals that the hoppings in $\mathscr{R}$ are essentially short-range up to a convolution-type renormalization. It may be of some interest to ask whether, although we have already established localization in this paper, one could apply the transfer matrix method to this renormalized equation and obtain an alternative proof, though such an approach would likely be less general than the method presented here.\\
(2) Indeed, the cone properties \eqref{right cone prop for R+} and \eqref{right cone prop for R0 and R-} only describe the propagation in the right direction; therefore, the quantitative unique continuation can be established on the right interval $[0,L]\subset[-L,L]$ (see \eqref{right direction UC for R+} and \eqref{right direction UC for R0 and R-}). Analogously, if one considers the coefficient
       \[p_{-R}+q_{-R}V(n+R)=\overline{p_{R}+q_{R}V(n+R)}\]
	   in \eqref{pointwise convolution renormalization equation}, one also obtains the same estimates along the left direction, i.e. 
       \begin{equation}\label{left cone prop for R+}
       \max_{0<k\leq 2R}|u(n-k)|\geq C^{-1}\cdot |u(n)|;
       \end{equation}
       \begin{equation}\label{left direction UC for R+}
          \#\Bigl\{ x\in[-L,0] : |u(x)| > C^{-L} \Bigr\} \;\ge\; \epsilon L, \ \forall L\geq L_0.
        \end{equation}
       for $T\in \mathscr{R}_+$; and 
       \begin{equation}\label{left cone prop for R0 and R-}
       \max_{0<k\leq 2R}|u(n-k)|\geq C^{-1}b\cdot |u(n)|;
       \end{equation}
       \begin{equation}\label{left direction UC for R0 and R-}
       \#\Bigl\{ x\in[-L,0] : |u(x)| > (b^{-1}C)^{-L} \Bigr\} \;\ge\; \epsilon L, \ \forall L\geq L_0.
       \end{equation}
       for $T\in \mathscr{R}_0$ and $T\in \mathscr{R}_-$ with potentials satisfying \eqref{lower bound on potential, R0}, \eqref{lower bound on potential, R-} respectively. Here $R=D_{\hop}(T)$ actually.
\end{rmk}

However, to apply Theorem \ref{QUC for rational class} in the later proof of the Wegner estimate, we need to ensure that the transversality set matches the free sites argument. Thus we must refine Theorem \ref{QUC for rational class} as follows:

\begin{thm}[QUC for $\mathscr{R}$ on set of free sites]\label{QUC for rational class, refined to free sites}
	For each $T\in \mathscr{R}$, we adopt the representation \eqref{represent mathscrR} for $T$. For all $L\geq L_0=L_0(T)=2D_{\hop}(T)$, let $\Lambda=[-L,L]$ and consider $u$ to be a nonzero solution of the following Dirichlet boundary problem:
	\begin{equation}\label{Dirichlet QUC equation}
			(T_{\Lambda}+V_{\Lambda})u = 0, \quad u\in\ell^2(\Lambda).
	\end{equation}
	Assume that $\Lambda'\subset \Lambda$ is a $L'$-interval ($L'\leq L$), and $S\subset \La'$ is a union of disjoint intervals of length $2D_{\hop}(T)+1$. Let $\wtS$ denote the set of centers of those intervals, i.e.,
	\[
	S=\bigcup_{m \in \wtS}[m-D_{\hop}(T),\,m+D_{\hop}(T)].
	\]
	Assume that $\|V\|_{\infty}\leq B$ and that $u$ is a solution of \eqref{Dirichlet QUC equation}. Then
	\begin{itemize}
		\item[(1)] if $T\in \mathscr{R}_+$, then 
		            \begin{equation}\label{transversality on a one-dim set, scrR+, free sites type}
                       \#\Bigl\{ x\in S : |u(x)| > C^{-L'} \| u\|_{\ell^\infty(\La')} \Bigr\} \;\ge\; \# \wtS;
                    \end{equation}
		\item[(2)] if $T\in \mathscr{R}_0$ and we additionally assume that the potential $V$ satisfies \eqref{lower bound on potential, R0}, then 
					\begin{equation}\label{transversality on a one-dim set, scrR0, free sites type}
                       \#\Bigl\{ x\in S : |u(x)| > (b^{-1}C)^{-L'} \| u\|_{\ell^\infty(\La')} \Bigr\} \;\ge\; \# \wtS;
                    \end{equation}
	    \item[(3)] if $T\in \mathscr{R}_-$ and we additionally assume that the potential $V$ satisfies \eqref{lower bound on potential, R-}, then
					\begin{equation}\label{transversality on a one-dim set, scrR-, free sites type}
                       \#\Bigl\{ x\in S : |u(x)| > (b^{-1}C)^{-L'} \| u\|_{\ell^\infty(\La')} \Bigr\} \;\ge\; \# \wtS.
                    \end{equation}
	\end{itemize}
	Here $C=C(B,T)$ is a constant depending only on $B$ and $T$.
\end{thm}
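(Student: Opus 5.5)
The plan is to reduce the Dirichlet problem \eqref{Dirichlet QUC equation} to the renormalized short-range relation \eqref{pointwise convolution renormalization equation} in the bulk of $\Lambda$. From that relation we get one-sided cone properties, and we then propagate from a maximizer of $|u|$ on $\La'$ into each block of $S$. Write $R=D_{\hop}(T)$ throughout.

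\textbf{Step 1 (renormalization with boundary).} Extend $u$ by zero outside $\Lambda$. Then \eqref{Dirichlet QUC equation} says exactly that $g:=(T+V)u$ is supported in $\Z\setminus\Lambda$. Applying $T_Q$ and using $T_QT=T_P$ gives
\[
T_Pu+T_Q(Vu)=T_Qg .
\]
Since $T_Q$ has range $R$, the right-hand side vanishes at every $n$ with $\dist(n,\Z\setminus\Lambda)>R$, that is, for $n\in[-L+R,L-R]$. So \eqref{pointwise convolution renormalization equation} holds for all such $n$, with $u\equiv0$ outside $\Lambda$.

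\textbf{Step 2 (cone properties).} Repeat the computation in \eqref{pigeonhole in R+} and \eqref{pigeonhole in R0}, solving the equation at the point $n+R$ for the coefficient of $u(n)$. In case (1) this coefficient is $p_R$; in cases (2) and (3) its modulus is bounded below by $|q_R|b$, using \eqref{lower bound on potential, R0} or \eqref{lower bound on potential, R-}. This yields the right cone property
\[
|u(n)|\le C'\max_{0<k\le 2R}|u(n+k)|
\]
for every $n\le L-2R$, where $C'=C$ in case (1) and $C'=b^{-1}C$ in cases (2) and (3). By the conjugate symmetry in the Remark, the left cone property $|u(n)|\le C'\max_{0<k\le2R}|u(n-k)|$ holds for every $n\ge -L+2R$.

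\textbf{Step 3 (propagation to each block).} Let $x^*\in\La'$ satisfy $|u(x^*)|=\|u\|_{\ell^\infty(\La')}$, which is positive after a trivial check: if $u\equiv0$ on $\La'$ there is nothing to prove, and otherwise we normalize. Fix $m\in\wtS$ and the block $I_m=[m-R,m+R]\subset\La'$.
\begin{itemize}
\item If $x^*\in I_m$, take $x^*$ itself.
\item If $x^*<m-R$, iterate the right cone property to build points $x^*=x_0<x_1<\cdots$ with $x_{j+1}-x_j\le 2R$ and $|u(x_{j+1})|\ge C'^{-1}|u(x_j)|$. We stop at the first index with $x_j\ge m-R$. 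Because the gaps are at most $2R$ and $I_m$ contains $2R+1$ consecutive integers, that stopping point lies in $I_m$. The cone property was legitimately applied at each earlier $x_j$, since $x_j<m-R\le L-2R$ (because $m+R\le L$). Every point used lies in $[x^*,m+R]\subset\La'$, and the number of steps is at most $|\La'|\le 2L'+1$.
\item If $x^*>m+R$, do the same to the left using the left cone property.
\end{itemize}
In every case we obtain a point of $I_m$ with $|u|\ge C'^{-(2L'+1)}\|u\|_{\ell^\infty(\La')}$. Enlarging the constant (say $C\mapsto C^3$, still depending only on $B$ and $T$) turns this into the bound $C'^{-L'}$ with strict inequality. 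The blocks are disjoint, so we get at least $\#\wtS$ distinct points, which is the claim in all three cases.

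The main obstacle is Step 1 together with the boundary bookkeeping in Step 3. The truncated operator $T_\Lambda$ is no longer a convolution, so the renormalized equation fails within distance $R$ of $\partial\Lambda$. The argument therefore has to check that the cone property is only invoked at points that are strictly on the near side of the target block. This is exactly what the condition $x_j<m-R\le L-2R$ (and its mirror image) guarantees, and it is why $L\ge L_0=2R$ is needed.
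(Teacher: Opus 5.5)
Your route is the paper's route: renormalize the Dirichlet equation in the bulk via $T_QT=T_P$, derive the right cone property for $n\le L-2R$ and the left one for $n\ge -L+2R$, then walk from a maximizer of $|u|$ on $\Lambda'$ with steps of length at most $2R$, so that every block of $2R+1$ sites gets hit. Your stopping rule $x_j<m-R\le L-2R$ checks where the cone properties are valid, which is more careful than the paper's ``net of $\Lambda'$'' argument, and case (1) is fine after enlarging $C$.

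\textbf{The gap is the final absorption in cases (2) and (3).} The walk can take about $2L'$ steps, each costing $C'=b^{-1}C$, so what you have proved is $(b^{-1}C)^{-2L'}\|u\|_{\ell^\infty(\Lambda')}$. Replacing $C$ by $C^3$ does not turn this into $(b^{-1}\widetilde C)^{-L'}$. That would need $\widetilde C\ge b^{-1}C^2$, which depends on $b$ and fails exactly in the regime $b\sim\delta\to0$ where the theorem is used.

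No cleverer walk fixes this, because the exponent $L'$ is false in case (3). Take $F(z)=1/(3-z-z^{-1})\in\mathscr{R}_-$, so $R=1$. Then $T_\Lambda^{-1}$ is tridiagonal with off-diagonal entries $-1$ and diagonal entries $3$, except $1/r$ at the two endpoints, where $r=\tfrac{3-\sqrt5}{2}$. Set $V\equiv b$ on $(-L,L]$ and solve $(T_\Lambda^{-1}+V^{-1})w=0$ for $w=Vu$ leftward from $w(L)=1$. Then $w$ grows by a factor at least $2+b^{-1}$ per site, and the equation at $-L$ is met by a choice $V(-L)\approx -r$. The resulting $u=V^{-1}w$ solves the Dirichlet problem with $b\le|V|\le 1$, and
\[
\max_{[L-2,L]}|u|\lesssim b^{2L-3}\|u\|_{\ell^\infty(\Lambda)}.
\]
For $L\ge4$ and $b$ small this is below $(b/C)^{L}$, so the block $[L-2,L]$ violates the claimed bound.

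So in cases (2) and (3) your argument proves the bound with $(b^{-1}C)^{-2L'}$. The paper's proof has the same slip: its estimate ``$C^{-|s|}|u(0)|\ge C^{-|x_s|}\|u\|_{\ell^\infty(\Lambda')}$'' never bounds $|s|$ by $L'$. The weaker exponent is harmless where the theorem is applied in the Wegner estimate, since there $L'=\widehat D$ and $\widehat D\log(1/b)\ll D_2$.

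\textbf{A smaller point.} Your remark ``if $u\equiv0$ on $\Lambda'$ there is nothing to prove'' is wrong. In that case $\{x\in S:|u(x)|>0\}=\varnothing$, while $\#\widetilde S\ge1$, so the claim would fail. You must rule this case out, and your cone properties do it:
\begin{itemize}
\item $2R$ consecutive zeros of $u$ inside $\Lambda$ propagate to the left by the right cone property (valid for $n\le L-2R$);
\item they propagate to the right by the left cone property (valid for $n\ge -L+2R$);
\item hence $u$ would vanish on all of $\Lambda$.
\end{itemize}
Since $\Lambda'$ contains a block of $2R+1$ sites, $u\neq0$ forces $\|u\|_{\ell^\infty(\Lambda')}>0$.
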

\begin{proof}[Proof of Theorem \ref{QUC for rational class, refined to free sites}.]
We first show that the renormalized pointwise equation \eqref{pointwise convolution renormalization equation} in the non-boundary problem also holds for the Dirichlet boundary problem. This is because $T_Q$ and $T_P$ are short-range. Examining the entries of the relation $T_Q T = T_P$ gives 
\begin{equation}\label{entrie renormalization relationship}
	\sum_{k:|k-n|\leq d_2} q_{n-k} f(k-m) = p_{n-m}, \ \forall n,m\in \mathbb{Z}.
\end{equation}
Since $u$ is a solution of \eqref{Dirichlet QUC equation}, for every $n\in [-L,L]$ we have 
\[
(T_{\Lambda}+V_{\Lambda})u(n) = \sum_{m\in [-L,L]} f(n-m) u(m) + V(n)u(n) = 0.
\]
We restrict $-L+D_{\mathrm{hop}}(T) \leq n \leq L-D_{\mathrm{hop}}(T)$ and extend $(T_{\Lambda}+V_{\Lambda})u = 0$ to $\ell^2(\mathbb{Z})$ by zero. Then by \eqref{entrie renormalization relationship},
\begin{align*}
0 &= T_q (T_{\Lambda}+V_{\Lambda})u 
   = \sum_{k:|k-n|\leq d_2} q_{n-k} \bigl[(T_{\Lambda}+V_{\Lambda})u(k)\bigr] \\
  &= \sum_{k:|k-n|\leq d_2} q_{n-k} \left( \sum_{m\in [-L,L]} f(k-m)u(m) + V(k)u(k) \right) \\
  &= \sum_{m\in [-L,L]} \left( \sum_{k:|k-n|\leq d_2} q_{n-k} f(k-m) \right) u(m) 
     + \sum_{k:|k-n|\leq d_2} q_{n-k} V(k) u(k) \\
  &= \sum_{m:|m-n|\leq D_{\mathrm{hop}}(T)} \bigl( p_{n-m} + q_{n-m} V(m) \bigr) u(m).
\end{align*}
That is,
\begin{equation}\label{pointwise renormalization relationship, Dirichlet}
	\sum_{|k|\leq D_{\mathrm{hop}}(T)} \bigl( p_k + q_k V(n-k) \bigr) u(n-k) = 0
\end{equation}
holds for every $-L+D_{\mathrm{hop}}(T) \leq n \leq L-D_{\mathrm{hop}}(T)$. Thus we recover \eqref{pointwise convolution renormalization equation} for \eqref{Dirichlet QUC equation} successfully.

With \eqref{pointwise renormalization relationship, Dirichlet} in hand, we only consider the case $T\in\mathscr{R}_+$, since the other two cases follow by the same argument. Let $x_0\in\La'$ be a point where $u$ attains its maximum in $\La'$, i.e., $u(x_0)=\|u\|_{\ell^\infty(\La')}$. As in the proof of Theorem \ref{QUC for rational class}, we still have \eqref{right cone prop for R+} and \eqref{left cone prop for R+}. Thus there exists a sequence 
\[\cdots < x_s < x_{s+1} < \cdots < x_{-1} < x_0 < x_1 < \cdots < x_j < x_{j+1} < \cdots \]
such that 
\[|x_s - x_{s+1}| < 2R = 2D_{\hop}(T), \qquad |u(x_s)| \ge C^{-|s|} |u(0)| \ge C^{-|x_s|} \| u\|_{\ell^\infty(\La')}.\]
Therefore $\{x_s\}$ forms a $D_{\hop}(T)$-net of $\La'$, and hence each interval $[m-D_{\hop}(T), m+D_{\hop}(T)]$ with $m\in\wtS$ contains at least one $x_s$. This proves \eqref{transversality on a one-dim set, scrR+, free sites type}.
\end{proof}

Finally, one might still want to consider the natural question: \textbf{for more general hopping operators (with more general Laurent symbols), although the QUC may fail, can we establish some ``weaker'' results in the unique continuation scheme?} There are two possible ways: one is to shift the QUC from a deterministic type to a large deviation type; that is to say, when the potential $V$ indeed involves some randomness, one can prove that the QUC holds with large probability (just as \cite{DS20,Li22,LSZ26} did). Another is to weaken the lower bound on the dimension of the transversality set. In this direction, we have the following result for all hoppings with super-exponential decay, under the additional assumption that the solution is bounded:
\begin{thm}[weak QUC for hoppings with super-exponential decay]\label{QUC for super-exponential decay}
	Assume $T$ satisfies \textup{\textbf{(A1)}, \textbf{(A2)}} and 
	\[|f(n)|\lesssim \exp\{-|n|^{\alpha}\},\ \alpha>1.\]
	Let $u$ be a solution of \eqref{equation in QUC} satisfying $\|V\|_{\infty}\leq B$ and $\|u\|_{\infty}\leq \wtB$. Then there exist $L_0=L_0(T,B,\wtB)$, $C=C(T,B)$, and $\epsilon=\epsilon(T,B)$ such that for every $L\geq L_0$, we have 
	\begin{equation}\label{transversality for super-exponential decay}
		\#\Bigl\{ x\in[-L,L] : |u(x)| > C^{-L} \Bigr\} \;\ge\; \epsilon L^{1-\frac{1}{\alpha}}.
	\end{equation}
\end{thm}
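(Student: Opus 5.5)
The plan is to mimic the cone-property argument behind Theorem \ref{QUC for rational class}, but with a finite-range truncation of $T$ whose range grows with $L$. The boundedness assumption $\|u\|_\infty\le \wtB$ is what makes the discarded tail negligible. I should say at the outset that the per-step bookkeeping below does not yet close, for the reason explained in the third paragraph.

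\emph{Truncation.} Fix $L$ large and a range $r=r(L)\sim L^{1/\alpha}$, with the implicit constant to be tuned. Write $T=T_{\le r}+T_{>r}$, where $T_{\le r}$ keeps the entries $f(k)$ with $|k|\le r$. Since $|f(k)|\lesssim e^{-|k|^\alpha}$ and $\|u\|_\infty\le\wtB$, the tail satisfies
\[
\|T_{>r}u\|_\infty\lesssim \wtB\sum_{|k|>r}e^{-|k|^\alpha}\lesssim \wtB\, e^{-c r^\alpha}.
\]
So $(T_{\le r}+V)u=g$ with $\|g\|_\infty\le \wtB e^{-c' L}$, and the constant in $r$ can be chosen so that $\|g\|_\infty$ lies far below the threshold $C^{-L}$ once $L\ge L_0(T,B,\wtB)$. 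This is the only place where $\wtB$ enters, which is consistent with $L_0$ depending on $\wtB$ while $C$ and $\epsilon$ do not.

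\emph{Step lemma.} Choose $r_0\le r$ with $|r_0|$ maximal among those $k$ with $f(k)\ne0$. If $f$ is finitely supported, $T\in\mathscr R_+$ and Theorem \ref{QUC for rational class} already gives the stronger bound. By \textbf{(A1)}, $f(-r_0)=\overline{f(r_0)}$, so the extreme coefficients at both ends are nonzero and do not involve $V$. Solving the pointwise equation at $n$ for $u(n-r_0)$, exactly as in \eqref{pigeonhole in R+}, gives
\[
|u(n-r_0)|\le \frac{1}{|f(r_0)|}\Bigl(\bigl(\|f\|_{\ell^1}+B\bigr)\max_{|k|<r_0\ \text{or}\ k=-r_0}|u(n-k)|+\|g\|_\infty\Bigr).
\]
Hence, starting from $x_0=0$ with $u(0)=1$, each step moves by at most $2r_0$ and decreases $|u|$ by at most a factor $K_r:=(\|f\|_{\ell^1}+B)/|f(r_0)|$, provided the current value stays above $\|g\|_\infty$. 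Running the chain rightwards for $N$ steps yields $N$ distinct points in $[0,2Nr]$ with $|u|\ge K_r^{-N}$. Taking $N\approx \epsilon L^{1-1/\alpha}$ keeps these points inside $[-L,L]$. The required conclusion is then $K_r^{-N}>C^{-L}$, which together with $\|g\|_\infty\ll C^{-L}$ gives \eqref{transversality for super-exponential decay}.

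\emph{Main obstacle.} Everything reduces to controlling $K_r^{N}$, and the step above does not close as written. Super-exponential decay forces $|f(r_0)|\le e^{-r_0^\alpha}$, so $K_r\ge e^{r_0^\alpha}$. With $r\sim L^{1/\alpha}$ this makes a single step already cost about $e^{L}$, and $N$ steps cost $e^{NL}$, far beyond $C^{-L}$. To repair this I would make the extreme coefficient $V$-free and bounded below not at the single top index, but in an averaged sense. Concretely, I would first try to use that the symbol $F$ is entire on $\C_\times$ and not a Laurent polynomial. A Jensen/Hadamard-type argument should then produce infinitely many indices $k$ with $|f(k)|\ge e^{-C|k|^\alpha}$, and I would choose $r_0$ among them with $r_0^\alpha\lesssim L/N=\epsilon^{-1}L^{1/\alpha}$. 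That choice makes $r_0\sim L^{1/\alpha^2}$ and the total cost $e^{N r_0^\alpha}=e^{O(L)}$. With it, the number of accessible distinct points becomes the smaller count $N\approx\epsilon L^{1-1/\alpha}$ rather than $L/r_0$, which is precisely the exponent in \eqref{transversality for super-exponential decay}. Simultaneously, the truncation range $r$ must still satisfy $e^{-cr^\alpha}\ll C^{-L}$. Reconciling these two requirements on $r_0$ and $r$ is the step I expect to be the real difficulty. It may require decoupling them, by truncating at the large range $r$ while propagating only across the top nonvanishing coefficient chosen at the smaller scale $r_0$. That in turn needs a different way to handle the intermediate coefficients between $r_0$ and $r$, since their total size is not small compared with $|f(r_0)|$.
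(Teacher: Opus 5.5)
You are right that the argument does not close. The obstruction is structural, not bookkeeping: propagating by solving the equation for one value $u(n-r_0)$ and dividing by $f(r_0)$ breaks down for genuinely infinite-range hopping.

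\textbf{The step already fails in the favourable case.} Take $|f(k)|\approx e^{-|k|^{\alpha}}$ and $r_0=r$. Your inequality yields a new point only when the current value exceeds $\|g\|_\infty/|f(r)|$, not $\|g\|_\infty$. The best available bound for this ratio is about $\wtB e^{-((r+1)^{\alpha}-r^{\alpha})}\approx e^{-cL^{1-1/\alpha}}$. After one step you only know $|u(x_1)|\gtrsim e^{-r^{\alpha}}\approx e^{-c'L}$, which lies far below that threshold. So the chain stops after one step. Since the hypothesis is only an upper bound on $|f|$, even the first step can be vacuous.

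\textbf{The proposed repair fails too.} No Jensen/Hadamard argument gives infinitely many $k$ with $|f(k)|\ge e^{-C|k|^{\alpha}}$, because $f$ may decay much faster; e.g.\ $f(k)=e^{-|k|^{2\alpha}}$ satisfies every hypothesis. More fundamentally, once $r_0$ is not the extreme index, the terms $f(k)u(n-k)$ with $r_0<|k|\le r$ are not negligible against $f(r_0)$ times your target. The dominating point produced by the equation can then lie on either side of the current one, and you lose the one-sided propagation on which the cone argument rests.

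\textbf{The missing idea} is a global substitute for ``the extreme coefficient dominates''. The paper obtains it from a Carleman estimate.
\begin{itemize}
\item The conjugate $T_\tau=e^{\tau n}Te^{-\tau n}$ is again a convolution operator, with symbol $F(e^{\tau}z)$, i.e.\ $F$ restricted to the circle $|z|=e^{\tau}$.
\item Super-exponential decay makes $F_+(z)=\sum_{k\ge 0}f(k)z^k$ entire of order $0$, and $F_-(1/z)\to 0$ as $|z|\to\infty$.
\item Wiman's minimum-modulus theorem then gives $\tau_0=\tau_0(T,B)$ with $\inf_{|z|=e^{\tau_0}}|F(z)|>2B$. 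Hence $\|e^{\tau_0 n}v\|_2\le (2B)^{-1}\|e^{\tau_0 n}Tv\|_2$ for all finitely supported $v$. For finite-range $T$ this is just the top coefficient dominating on large circles, i.e.\ the global form of your cone property.
\item Applying this to $v=\chi_N u$ absorbs $V$ and gives $\|e^{\tau_0 n}[T,\chi_N]u\|_2\ge B$.
\item Using $\|u\|_\infty\le\wtB$, every commutator contribution is negligible except those with $n$ and $k$ both within $R\sim N^{1/\alpha}$ of $N$. To the left of $-N/2$ the weight is tiny. Elsewhere $e^{-|n-k|^{\alpha}}$ beats $e^{\tau_0 n}$ once $|n-k|\ge R$.
\item This yields $\sup_{|k-N|\le R}|u(k)|\ge C^{-N}$ directly from $u(0)=1$, so losses never compound.
\end{itemize}
Counting disjoint windows of width $\sim N^{1/\alpha}$ up to $L$ then gives the required $\epsilon L^{1-1/\alpha}$ points.
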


We remark that Theorem \ref{QUC for super-exponential decay} does not hold without the boundedness condition $\| u\|_{\infty}<\wtB$. A counterexample is given by taking $f(k)=2^{-k^2}$, $k\in \mathbb{Z}$; then the Laurent symbol of $T$ becomes the Jacobi theta function 
\[
F(z)=\vartheta_3\!\left(\frac{\log z}{2i},\frac{1}{2}\right)=\prod_{m=1}^{\infty} (1-2^{-2m})(1+2^{-(2m-1)}z)(1+2^{-(2m-1)}z^{-1}),
\]
which has two essential singularities at $0$ and $\infty$, and zeros at $-2^{2m-1}$, $m\in \mathbb{Z}$. Then an argument similar to that in Counterexample \ref{oscillation counterexample} shows that \eqref{transversality for super-exponential decay} fails.

Since Theorem \ref{QUC for super-exponential decay} is independent of the proof of our main result Theorem \ref{Localization near the edge band}, we will defer its proof to Appendix \ref{appendix: proof of QUC for super-exponetial decay} for the convenience of the reader.

\section{The initial scale: elliptic analysis}\label{initial scale section}
In this section, we establish the Green's function estimate for \eqref{longrange schrodinger operator} near the edge of the spectrum at the initial scale. We adapt the method developed in \cite[Section 2]{LSZ25}, which combines the arguments from \cite{Bou04} with the periodic approximation techniques from \cite{Klopp98,Klopp02}. The argument in this section does not depend on the dimension, so we will carry it out in $\Z^d$ for comparison with the corresponding parts of previous works.

Let \eqref{longrange schrodinger operator} be defined on $\Z^d$; then the Fourier symbol of $T$ becomes 
\[ \hat{f}(\theta)=\sum_{n\in \Z^d} f(n) e^{2\pi i n\cdot \theta}, \ \theta\in \T^d. \]
We still assume that $\hat{f}$ satisfies \textbf{(A1)} and \textbf{(A2)} on $\T^d$, and, without loss of generality, we also assume that \textbf{(A3)} holds. Additionally, we make the following assumption:
\begin{itemize}
	\item[\textbf{(A4)}] $\hat{f}$ has finitely many minimum points $\theta_1,\theta_2,\dots,\theta_J$ and 
	    \begin{equation}\label{elliptic assumption}
			 \hat{f}(\theta) \geq D_T \cdot \left( \min_{1\leq j\leq J}\|\theta-\theta_j \|_{\T^d} \right)^{d_T}, \ \forall \theta\in \T^d
		\end{equation}
		for some constant $D_T>0$ and even positive integer $d_T$ depending only on $T$.
\end{itemize}

\begin{rmk}\label{remark on the assumption (A4)}
	(1) In the one-dimensional case, \textbf{\textup{(A4)}} automatically holds if \textbf{\textup{(A1)}} and \textbf{\textup{(A2)}} hold. This can be seen from the following argument: \textbf{\textup{(A2)}} ensures that $\hat{f}(\theta)$ can be holomorphically extended to $\hat{f}(z)$ in some band $\{z:|\Im z|<\rho \}$, and by the uniqueness theorem, $\hat{f}$ can have only finitely many zeros (and hence, by \textbf{\textup{(A3)}}, also finitely many minimum points) on $\{|\Im z|=0\}$, which we denote by $\theta_j$, $1\leq j\leq J$. Let the degree of $\theta_j$ be $d_j$ (which is an even positive number); then near each $\theta_j$ we have $\hat{f}(\theta)\gtrsim \|\theta-\theta_j \|_{\T}^{d_j}$. Set $d_T=\max_{1\leq j\leq J} d_j$, and then we obtain the global estimate \eqref{elliptic assumption} on $\T$.\\
    (2) However, when the dimension $d\geq 2$, \textbf{\textup{(A4)}} may fail even if \textbf{\textup{(A1)}} and \textbf{\textup{(A2)}} hold. For example, take $d=2$ and $\hat{f}(x_1,x_2)=2-2\cos(2\pi x_1)$ with $x=(x_1,x_2)\in \T^2$. Then $\hat{f}$ satisfies \textbf{\textup{(A1)}} and \textbf{\textup{(A2)}} and has minimum value zero, but it fails \textbf{\textup{(A4)}} because it attains its minimum on the whole line $\{(x_1,x_2): x_1=0\}$. Therefore, in higher dimensions we must additionally assume \textbf{\textup{(A4)}}, which is exactly what is done in \cite{LSZ25}.\\
    (3) The assumption \textbf{\textup{(A4)}} is not artificial. It is indeed an ``elliptic condition'' and ensures that $T$ acts like the Laplacian near the edge of the spectrum. This assumption can also be seen in \cite{Klopp98,Klopp02,GRM22,LSZ25}. However, unlike the Laplacian, whose Fourier symbol has only a unique minimum point, $T$ may have many minimum points. This poses some obstacles compared to the standard analysis for the Laplacian in \cite{Bou04,BK05,DS20,LZ22}, which we will discuss at the end of this section.
\end{rmk}

\subsection{LDT for the initial scales}
Let $0<\delta\ll 1$ and $N_0\gg 1$ be determined later. In the following, we only consider energies in the range $E\in [0,\delta]$. Write \eqref{longrange schrodinger operator} as 
\[
H(\omega)-E = (T-1) - \bigl(E-1-\lambda V(\omega)\bigr),
\]
and by Neumann series expansion we have 
\begin{align}\label{Neumann expansion}
   \notag  G_{N_0}(E;\omega) &= (H_{N_0}-E)^{-1} \\
   &= (\lambda V_{N_0}+1-E)^{-1} \sum_{s\geq 0} \left[ (1-T_{N_0})(\lambda V_{N_0}+1-E)^{-1} \right]^s.
\end{align}
Here, for simplicity we denote the restricted operators $H_{\Lambda_{N_0}}$ by $H_{N_0}$, and so on, and we hide the dependence on $\omega$.

By \textbf{(A3)}, the operator $1-T$ has Fourier symbol $1-\hat{f}\in [0,1]$, and therefore 
\begin{equation}\label{about 1-T}
	1-T \text{ is positive}; \quad \|1-T\|=\|1-\hat{f} \|_{\infty}=1. 
\end{equation}
Moreover, $E\in [0,\delta]$ and \eqref{Bernoulli potential} ensure that 
\begin{equation}\label{about reverse potential}
	\|(\lambda V+1-E)^{-1} \| \leq (1-\delta)^{-1}.
\end{equation}

We prove the following large deviation theorem (LDT) on the Green's function at the initial scale, which is the main part of this section: (The constant $C_{\rm in}$ will be chosen in Section~\ref{large scale section}.)
\begin{thm}\label{LDT for the initial scale, without free sites}
Assume that the hopping operator $T$ satisfies \textup{\textbf{(A1), (A2), (A3)}} and \textup{\textbf{(A4)}}. For any $C_{\rm in}\geq 2$, let $N_{\rm in}\gg_{\lambda,T,d, C_{\rm in}} 1$ be the initial scale, and let $\delta=(\log N_{\rm in})^{-6000d_T}$. Choose the scales 
\[
L=\lfloor \delta^{-\frac{1}{24}\cdot \frac{2}{d_T}}\rfloor \sim (\log N_{\rm in})^{500},\ L'=\lfloor\delta^{-\frac{1}{48}\cdot \frac{2}{d_T}} \rfloor \sim (\log N_{\rm in})^{250}.
\]
Then for any $N_{\rm in}\leq N_0 \leq N_{\rm in}^{C_{\rm in}}$ such that 
\begin{equation}\label{initial scale set Scale_in}
	N_0\in  {\rm Scale}_{\rm in}:=\left\{N\in\Z:\ N_{\rm in}\leq N\leq N_{\rm in}^{C_{\rm in}}, \ \frac{2N+1}{(2L'+1)(2L+1)}\in \Z_+  \right\}, 
\end{equation}
the following holds: for $\omega$ outside an event $\Xi_{N_0}\subset \{0,1\}^{\Lambda_{N_0}}$ with probability less than $e^{- (\log N_0)^{10}}$, we have 
\begin{equation}\label{L2 norm estimate on Green's function, initial scale}
	\|G_{N_0}(E;\omega) \| \leq (\log N_0)^{7000 d_{T}}
\end{equation}
and 
\begin{equation}\label{off diagonal decay estimate on Green's function, initial scale}
	|G_{N_0}(E;\omega)(n,n')| \leq  \exp \left\{  -\frac{|n-n'|}{(\log N_0)^{7000 d_T}} \right\}, \ \forall \ |n-n'|\geq \frac{N_0}{200}
\end{equation}
for every $E\in [0,\delta]$.
\end{thm}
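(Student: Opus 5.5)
We follow the Bourgain/Klopp scheme adapted in \cite[Section 2]{LSZ25}. The goal is to show that, outside a small event, the smallest eigenvalue of $H_{N_0}$ is at least $2\delta$. The Green's function bound \eqref{L2 norm estimate on Green's function, initial scale} then holds uniformly in $E\in[0,\delta]$, and the off-diagonal decay \eqref{off diagonal decay estimate on Green's function, initial scale} follows from a Combes--Thomas type argument using the exponential decay \eqref{exp decay hopping} of the hopping.

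Step 1: positivity reduction. Since $T\ge 0$ by \textbf{(A3)} and $V\ge0$, we have
\[
\langle (H_{N_0}-E)\phi,\phi\rangle=\langle T_{N_0}\phi,\phi\rangle+\lambda\sum V(n)|\phi(n)|^2-E.
\]
It therefore suffices to show that any normalized $\phi$ with $\langle T_{N_0}\phi,\phi\rangle\le 3\delta$ has a definite amount of mass on $\{V=1\}$. Split $\Lambda_{N_0}$ into blocks $\Lambda_L(x)$ of size $2L+1$, grouped into super-blocks of size $(2L'+1)(2L+1)$; this is the reason for the divisibility condition in ${\rm Scale}_{\rm in}$. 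By \textbf{(A4)}, a small value of $\langle T\phi,\phi\rangle$ forces the Fourier transform of $\phi$ to concentrate within distance $\delta^{1/d_T}$ of the finitely many minima $\theta_1,\dots,\theta_J$. On each $L$-block, with $L\ll\delta^{-1/d_T}$ (indeed $L\sim\delta^{-1/(12 d_T)}$), this gives the local approximation
\[
\phi|_{\Lambda_L(x)}\approx \sum_{j}c_j(x)e^{2\pi i n\cdot\theta_j},
\]
with error $O(L\delta^{1/d_T})$ relative to the local $\ell^2$ mass. The exponential tails of $T$, as in Klopp's periodic approximation, are what make this localization of the quadratic form to blocks possible.

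Step 2: probability. A block is ``good'' if the $J$-dimensional space of such plane-wave combinations cannot be nearly orthogonal to $\{V=1\}$ on it. Concretely, we require
\[
\sum_{n\in\Lambda_L(x)\cap\{V=1\}}\Bigl|\sum_j c_je^{2\pi i n\theta_j}\Bigr|^2\gtrsim L^{d}|c|^2
\]
for all $c$. This fails only when the Bernoulli pattern on the block is very degenerate, with probability at most $e^{-cL^{d}}$ for fixed $J$. A union bound over the $\lesssim N_0^d$ blocks gives total failure probability below $e^{-(\log N_0)^{10}}$, because $L^d\gtrsim(\log N_{\rm in})^{500}\gg(\log N_0)^{10}$ when $N_0\le N_{\rm in}^{C_{\rm in}}$ and $N_{\rm in}$ is large.

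Step 3: conclusion on good configurations. The mass of $\phi$ is distributed over blocks, and on each block it is close to a plane-wave combination. On a good block, a constant fraction of that mass lies on $V=1$, which gives $\lambda\sum V|\phi|^2\gtrsim\lambda$. This contradicts $\langle (H_{N_0}-E)\phi,\phi\rangle\le 3\delta$ and yields $\|G_{N_0}\|\le\delta^{-1}\ll(\log N_0)^{7000d_T}$. The intermediate scale $L'$ is used to control boundary mass between blocks: averaging the cutoff position over $L'$ shifts makes the commutator error with $T$ acceptable.

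The main obstacle is Step 1 combined with the multiple minima. The plane waves $e^{2\pi i n\theta_j}$ for different $j$ can interfere on a block, so a Bernoulli pattern may kill a specific linear combination. We will handle this by exploiting that on an $L$-block with $L\gg\max_{j\ne j'}\|\theta_j-\theta_{j'}\|^{-1}$ the plane waves are almost orthogonal. If a minimum point is rational, resonances could occur, and we will then apply the randomness argument to the finite-dimensional family uniformly, using an $\varepsilon$-net over the unit sphere in $\C^J$.
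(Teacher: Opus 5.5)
Your route differs genuinely from the paper's and is more elementary; with the corrections in the second paragraph it goes through.

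\textbf{How the paper argues.} It works by contradiction from $\|[(1-T_{N_0})(\lambda V_{N_0}+1-E)^{-1}]^2\|>1-\delta$. It builds an approximate eigenvector, passes to the periodic extension and a Floquet fiber $P+\lambda V_{N_0}$, and truncates in the Floquet basis to $K$-neighbourhoods of $(2N_0+1)\theta_j$. Klopp's discrete uncertainty principle (Lemma~\ref{discrete uncertainty principle}) then makes the rotated-basis coefficients constant on $L'$-blocks, with error $K/K'\sim L'/L$. After centering $V=(1-\widetilde V)/2$, it reduces to a large deviation of the phased block averages $\mcN(j,j',k')$, bounded by Dudley and Hoeffding. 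Both Green's function bounds come from the Neumann series, the off-diagonal one by splitting it at $A=N_0/(\log N_0)^2$.

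\textbf{How you argue.} You bound the bottom of $\spec(H_{N_0})$ variationally. Since $\langle T_{N_0}\phi,\phi\rangle=\int_{\T^d}\hatf|\hat\phi|^2$ for $\phi$ supported in $\Lambda_{N_0}$, no periodic approximation is needed, and a discrete Poincar\'e inequality on the $L$-cubes replaces the uncertainty principle. The random input is the empirical Gram matrix $\bigl(\sum_{n\in B}V(n)e^{2\pi i n\cdot(\theta_{j'}-\theta_j)}\bigr)_{j,j'}$ on each cube $B$. Its mean is $\frac12(2L+1)^d(I+O_T(L^{-1}))$ because $\theta_j\neq\theta_{j'}$, and entrywise Hoeffding gives failure probability $\lesssim J^2e^{-cL^d/J^2}$ per cube, followed by a union bound. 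Combes--Thomas, as in Appendix~\ref{appendix: proof of R net argument}, gives the off-diagonal decay. This is shorter, has a stronger probability bound, and needs no second scale $L'$; the paper's formulation is inherited from \cite{LSZ25} and keeps everything in one Neumann-series computation. Both treat multiple minima by the same mechanism, near-orthogonality of the $J$ plane waves on blocks. So rational minima cause no resonance, since only $\theta_j\neq\theta_{j'}$ matters, and no net on the sphere of $\C^J$ is needed.

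\textbf{What must be corrected in your Step~1.}
\begin{enumerate}
\item The concentration radius $\delta^{1/d_T}$ is too small. \textbf{(A4)} only bounds the mass of $\hat\phi$ outside the $r$-neighbourhoods of the minima by $3\delta/(D_Tr^{d_T})$, which is $O(1)$ at $r=\delta^{1/d_T}$. Take $\delta^{1/d_T}\ll r\ll L^{-1}$, e.g.\ $r=\delta^{1/(2d_T)}$. This gives tail mass $O(\delta^{1/2})$ and Poincar\'e error $O(Lr)=O(\delta^{5/(12d_T)})$ in $\ell^2$.
\item The plane-wave approximation holds only in aggregate: $\|\phi-\psi\|_{\ell^2(\Lambda_{N_0})}=o(1)$ with $\psi|_B=\sum_jc_j(B)e^{2\pi in\cdot\theta_j}$. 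It does not hold relative to each block's own mass; near a zero of a slowly varying envelope the relative error is of order one. The aggregate form is exactly what Step~3 uses, via $\|\psi\|^2\le 2(2L+1)^d\sum_B|c(B)|^2$ and $\sum_{V=1}|\psi|^2\ge\frac18(2L+1)^d\sum_B|c(B)|^2$ when all cubes are good.
\item Do not localize the quadratic form of $T$ to blocks. Cutoff or commutator errors, even averaged over shifts, are at best of order $L^{-2}$, far above $\delta\approx L^{-12d_T}$, and would destroy the concentration. Neither the exponential tails of $T$ nor the scale $L'$ are needed in this step; exponential decay enters only through Combes--Thomas.
\end{enumerate}
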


\begin{proof}[Proof of Theorem \ref{LDT for the initial scale, without free sites}]
We divide the proof into six steps.

\noindent \textcolor{blue}{\large \textbf{(Step 1: construct the approximate eigenvector.)}} \\
Assume to the contrary that there exists some $E\in [0,\delta]$ such that 
\begin{equation}\label{reverse assumption in initial scale}
\left\| \left[  (1-T_{N_0})(\lambda V_{N_0}+1-E)^{-1} \right]^2 \right\| >1-\delta.		
\end{equation}
By \eqref{about reverse potential} and \eqref{reverse assumption in initial scale}, we obtain 
\begin{equation}\label{sandwich operator}
	\left\|  (1-T_{N_0})(\lambda V_{N_0}+1-E)^{-1}(1-T_{N_0})  \right\| > (1-\delta)^2. 
\end{equation}
Since $(1-T_{N_0})(\lambda V_{N_0}+1-E)^{-1}(1-T_{N_0})$ is self-adjoint on $\ell^2(\Lambda_{N_0})$, by \eqref{sandwich operator} we can find a unit vector $\xi$, supported in $\Lambda_{N_0}$, such that 
\begin{equation}\label{the inner product condition of vector xi}
	\langle \xi, (1-T_{N_0})(\lambda V_{N_0}+1-E)^{-1}(1-T_{N_0}) \xi  \rangle \geq (1-\delta)^2.
\end{equation}
Using \eqref{about reverse potential} and \eqref{the inner product condition of vector xi}, we have 
\[ \|(1-T)\xi \|^2 \geq  \| R_{\Lambda_{N_0}} (1-T)\xi\|^2 = \| (1-T_{N_0})\xi \|^2 \geq  (1-\delta)^3, \]
and by the Plancherel identity, this becomes
\begin{align*}
	(1-\delta)^3 &\leq \| (1-T) \xi \|^2 =\|(1-\hat{f})\hat{\xi} \|^2\\
	         &=\left( \int_{ \hatf< \eta}+\int_{\hatf\geq \eta} \right) | \hatxi |^2 (1-\hatf)^2 d\theta \\
			 &\leq \int_{\hatf<\eta} |\hatxi |^2 d\theta + (1-\eta)^2 \int_{\hatf \geq \eta}|\hatxi|^2 d\theta \\
             &=1-(1-(1-\eta)^2) \int_{\hatf\geq \eta}|\hatxi|^2 d\theta,
\end{align*}
where $\hatxi\in L^2(\T^d)$ is the Fourier transform of $\xi$ and we used $\| \hatxi \|=\| \xi \|=1$. The above estimate is equivalent to 
\begin{equation}\label{concentration of xi}
	\int_{\hatf\geq \eta} |\hatxi|^2 d\theta \leq \mcO_+\left(\frac{\delta}{\eta}\right),
\end{equation}
which means that $\hatxi$ is concentrated in $\{ \hatf<\eta\}$, i.e., near the minimum points of $\hatf$. With \eqref{concentration of xi} in hand, we can show that $\xi$ is a good approximate eigenvector of both $T$ and $V$:
\begin{itemize}
	\item (approximate eigenvector of $T$) We have 
	           \begin{align*}
				  \|T_{N_0}\xi\|^2 & \leq \| T\xi\|^2 = \left(  \int_{\hatf< \eta}+\int_{\hatf\geq \eta} \right) |\hatf|^2 |\hatxi|^2 d\theta \\
								&\leq \eta^2 \int_{\hatf< \eta} |\hatxi|^2 d\theta +\int_{\hatf\geq \eta} |\hatxi|^2 d\theta \\
								&\leq \eta^2+\mcO_+\left(\frac{\delta}{\eta}\right). 
			   \end{align*}
			   The optimal upper bound is attained at $\eta=\delta^{\frac{1}{3}}$, and we have 
	    \begin{equation}\label{approximate eigenvector of T}
			\|T_{N_0}\xi \| \leq \|T\xi \| \leq \mcO_+ \left(\delta^{\frac{1}{3}}\right).
		\end{equation}
    \item (approximate eigenvector of $V$) Substituting \eqref{approximate eigenvector of T} back into \eqref{the inner product condition of vector xi}, we obtain 
	        \[\langle \xi, (\lambda V_{N_0}+1-E)^{-1}\xi \rangle \geq (1-\delta)^2-\mcO_+\left(\delta^{\frac13}\right)=1-\mcO_+\left(\delta^{\frac{1}{3}}\right). \]
			Therefore, together with \eqref{about reverse potential}, we have 
			\begin{align*}
				\| \xi-(\lambda V_{N_0}+1-E)^{-1}\xi \|^2 & =\| \xi\|^2+\| (\lambda V_{N_0}+1-E)^{-1}\xi \|^2-2 \langle \xi, (\lambda V_{N_0}+1-E)^{-1}\xi \rangle \\
				                               &\leq 1+(1-\delta)^{-2}-2\left(1-\mcO_+\left(\delta^{\frac{1}{3}} \right)\right) \leq \mcO_+\left(\delta^{\frac{1}{3}}\right),
			\end{align*}
			and thus 
			\begin{align}\label{approximate eigenvector of V}
             \notag				\| (\lambda V_{N_0}-E)\xi\| &= \| \xi-(\lambda V_{N_0}+1-E)^{-1}\xi \| \\
				                    &\leq \| \lambda V_{N_0}+1-E\|\cdot \| \xi-(\lambda V_{N_0}+1-E)^{-1}\xi \| \leq \mcO_{+,\lambda} \left(\delta^{\frac{1}{6}}\right).
			\end{align}   
\end{itemize}

Combining \eqref{approximate eigenvector of T} and \eqref{approximate eigenvector of V}, we obtain 
\begin{equation}\label{approximate eigenvector of H}
	\| (H-E)\xi \|\leq \| T\xi\| +\| (\lambda V_{N_0}-E)\xi \| \leq \mcO_{+,\lambda}\left(\delta^{\frac{1}{6}} \right).
\end{equation}

\noindent \textcolor{blue}{\large \textbf{(Step 2: the periodic approximation and Floquet-Bloch decomposition.)}} \\
Next, we need to use the Floquet-Bloch theory and periodic approximation to extract the hidden structure of the potential in \eqref{approximate eigenvector of V} via the structure of the eigenvector $\xi$. For some preliminaries about the Floquet-Bloch theory, one can refer to \cite[Appendix B]{LSZ25}.

We use $(\cdot)_{N_0}$ to denote the restriction of an operator to $\Lambda_{N_0}$, and we will use $(\cdot)^{N_0}$ to denote its periodic extension (not the $N_0$-th power of the operator). Define 
\[
V^{N_0}=\sum_{n\in \Lambda_{N_0}} V(n) \left(\sum_{l\in [(2N_0+1)\Z]^d} \delta_{l+n,l+n}\right)
\]
as the periodic extension of the potential $V_{N_0}$, and denote 
\[
H^{N_0}=T+\lambda V^{N_0}.
\]
Since $\xi$ is supported in $\Lambda_{N_0}$, we have $H\xi=H^{N_0}\xi$, and therefore \eqref{approximate eigenvector of H} yields 
\[
\|(H^{N_0}-E)\xi \|\leq \mcO_{+,\lambda}\left(  \delta^{\frac{1}{6}}\right).
\]
This means
\begin{equation}\label{spec information on periodic extension from approximate on H}
	\operatorname{spec}(H^{N_0})\cap \left[ E-\mcO_{+,\lambda}\left(  \delta^{\frac{1}{6}}\right),\; E+\mcO_{+,\lambda}\left(  \delta^{\frac{1}{6}}\right)\right] \neq \varnothing.
\end{equation}
Since both $T$ and $\lambda V^{N_0}$ are positive operators, \eqref{spec information on periodic extension from approximate on H} together with $E\in [0,\delta]$ implies
\begin{equation}\label{spec information on periodic extension}
	\operatorname{spec}(H^{N_0})\cap \left[ 0,\; \mcO_{+,\lambda}\left(  \delta^{\frac{1}{6}}\right)\right] \neq \varnothing.
\end{equation}
Now denote $h(\theta)=\hatf(\theta)$. Via Floquet-Bloch theory, we can decompose $H^{N_0}$ into fiber matrices 
\[
H^{N_0}\cong_{\text{unitarily}} \bigoplus\nolimits_{y\in \left(\frac{1}{2N_0+1} \T\right)^d} M^{N_0}(y),
\]
where 
\[
M^{N_0}(y)=\left(h_{k-j}(y)\right)_{\Lambda_{N_0}\times \Lambda_{N_0}} +\lambda V_{N_0}, \quad
h_k(y)=\sum_{l\in [(2N_0+1)\Z]^d } f(k+l) e^{2\pi i l\cdot y}
\]
is the fiber matrix defined on $\ell^2(\Lambda_{N_0})$ with respect to the Floquet quasi-momentum $y\in \left(\frac{1}{2N_0+1} \T\right)^d$. Therefore,
\[
\spec(H^{N_0})=\bigcup\nolimits_{y\in \left(\frac{1}{2N_0+1} \T\right)^d} \spec(M^{N_0}(y)),
\]
and \eqref{spec information on periodic extension} implies that there exists some $x\in \left(\frac{1}{2N_0+1} \T\right)^d$ such that 
\begin{equation}\label{spec information on fiber matrix at x}
		\spec(M^{N_0}(x))\cap \left[ 0,\; \mcO_{+,\lambda}\left(  \delta^{\frac{1}{6}}\right)\right] \neq \varnothing.
\end{equation}
For simplicity, we denote $P= \left(h_{k-j}(x)\right)_{\Lambda_{N_0}\times \Lambda_{N_0}}$ and therefore $M^{N_0}(x)=P+ \lambda V_{N_0}$. Since $M^{N_0}(x)$ is self-adjoint, \eqref{spec information on fiber matrix at x} enables us to find a unit vector $a\in \ell^2(\La_{N_0})$ such that 
\begin{align}\label{approximate eigenvector of fiber matrix}
	\notag 0 &\leq \langle a,M^{N_0}(x) a\rangle_{\ell^2 (\La_{N_0})}  \\
	   &=   \langle a,P a\rangle_{\ell^2 (\La_{N_0})} + \langle a,\lambda V_{N_0} a\rangle_{\ell^2 (\La_{N_0}) } \leq \mcO_{+,\lambda}\left( \delta^{\frac{1}{6}}\right). 
\end{align}
Now, since $h=\hatf\geq 0$, Floquet-Bloch theory tells us that $P$ has the following positive Floquet eigenvalues with corresponding Floquet eigenbasis:
\begin{equation}\label{Floquet eigenvalue and Floquet eigenbasis}
	E_s(x) := h\left( x+\frac{s}{2N_0+1} \right) \ \Longleftrightarrow \ \beta_s(x) := \frac{1}{(2N_0+1)^{d/2}} \sum_{k\in \Lambda_{N_0}} e^{-2\pi i  k\cdot \left(x+ \frac{s}{2N_0+1}\right)} \delta_k,\; s\in \La_{N_0}.
\end{equation}
Here we denote $\delta_k$ as the standard orthonormal basis on $\ell^2(\Lambda_{N_0})$, which can conversely be represented via $\beta_s(x)$ by 
\begin{equation}\label{represent standard basis by Floquet basis}
	\delta_k= \frac{1}{(2N_0+1)^{d/2}} \sum_{s\in \Lambda_{N_0}} e^{2\pi i  k\cdot \left(x+ \frac{s}{2N_0+1}\right)} \beta_s(x).
\end{equation}
\eqref{Floquet eigenvalue and Floquet eigenbasis} implies that $P$ is a positive operator, and consequently \eqref{approximate eigenvector of fiber matrix} implies that the vector $a$, just like $\xi$, is a good approximate eigenvector of both $P$ and $V$:
\begin{equation}\label{approximate eigenvector of P}
	0 \leq  \langle a,P a\rangle_{\ell^2 (\La_{N_0})}  \leq \mcO_{+,\lambda}\left( \delta^{\frac{1}{6}}\right),
\end{equation}
\begin{equation}\label{approximate eigenvector of periodic V}
	0\leq  \langle a,\lambda V_{N_0} a\rangle_{\ell^2 (\La_{N_0}) } \leq \mcO_{+,\lambda}\left( \delta^{\frac{1}{6}}\right).
\end{equation}
Since the Floquet eigenbasis $\{\beta_s(x)\}_{s\in \La_{N_0}}$ diagonalizes $P$, we investigate \eqref{approximate eigenvector of P} by writing 
\[
a = \sum_{k\in \Lambda_{N_0}} a_k \cdot \beta_k(x) := (a_k)^{\rm FB}_{k\in \La_{N_0}}.
\]
Here we denote by $(\cdot)^{\rm FB}_{k\in \Lambda_{N_0}}$ the coefficients with respect to the basis $\beta_s(x)$, and by $(\cdot)_{k\in\Lambda_{N_0}}$ the coefficients with respect to the basis $\delta_k$. Moreover, we define the rotated standard orthonormal basis by 
\begin{equation}\label{rotated standard basis}
v_l = e^{-2\pi i l\cdot x} \delta_l = \left( \frac{1}{(2N_0+1)^{d/2}} e^{2\pi i l\cdot \frac{k}{2N_0+1}} \right)^{\rm FB}_{k\in \La_{N_0}}, \ l\in \La_{N_0}.
\end{equation}

\noindent \textcolor{blue}{\large \textbf{(Step 3: apply the quantitative uncertainty principle.)}} \\
From now on, for simplicity we still denote the inner product $\langle\cdot,\cdot\rangle_{\ell^2(\La_{N_0})}$ by $\langle\cdot,\cdot\rangle$, and $\| \cdot \|_{\ell^2(\La_{N_0})}$ by $\|\cdot \|$ when no confusion arises. Now let the following scales (positive integers) be determined later: 
\begin{equation}\label{scales in uncertainty principle}
	1\ll L'\ll L, \ 1\ll K\ll K',\ (2N_0+1)=(2L+1)(2K+1)=(2L'+1)(2K'+1).
\end{equation}
For each $1\leq j\leq J$, take $k_j\in \Z^d$ to be the lattice point closest to $(2N_0+1)\theta_j$, i.e., 
\[
|k_j - (2N_0+1)\theta_j| \leq \frac{1}{2}.
\]
Construct the cut-off of $a$ as follows: for each $1\leq j\leq J$,
\begin{equation}\label{the vector a^j}
	a^{(j)}=(a^{(j)}_{k})^{\rm FB}_{k\in \Lambda_{N_0}}, \qquad 
	a^{(j)}_k= \begin{cases}
		a_k, & {\rm if} \ |k-k_j|\leq K;\\
		0, & {\rm otherwise.}
	\end{cases}
\end{equation}
(Here, more explicitly, we mean that $k\in\Lambda_{N_0}\simeq \left(\Z/(2N_0+1)\Z\right)^d$ and $|\cdot|$ is viewed on the discrete torus.) We claim that the supports of $a^{(j)}$ for $1\leq j\leq J$ under the Floquet eigenbasis are mutually disjoint. This is because, by our choice of $k_j$, for $j\neq j'$ we have 
\begin{align*}
      |k_j-k_{j'}|& \geq |(2N_0+1)(\theta_j-\theta_{j'})|-|k_j-(2N_0+1)\theta_j|-|k_{j'}-(2N_0+1)\theta_{j'}|\\ 
          &\geq \min_{1\leq j\neq j'\leq J}|\theta_j-\theta_{j'}|\cdot (2N_0+1)-1 \gtrsim_T N_0\gg K. 
\end{align*}
Now we define the vector 
\begin{equation}\label{hat a}
    \widehat{a}= a-\sum_{1\leq j\leq J}a^{(j)},
\end{equation}
which, under the Floquet eigenbasis, is supported in 
\begin{align*}
      \left\{k:\ |k-k_j|>K,\ \forall 1\leq j\leq J \right\} &\subset  \left\{k:\ |k-(2N_0+1)\theta_j|>K-\tfrac{1}{2},\ \forall 1\leq j\leq J \right\}\\  
      &\subset \left\{k:\ \left\| \tfrac{k}{2N_0+1}-\theta_j\right\|_{\T^d} >\tfrac{K-\frac{1}{2}}{2N_0+1},\ \forall 1\leq j\leq J \right\} \\
      &\subset \left\{k:\ \left\|\tfrac{k}{2N_0+1}-\theta_j\right\|_{\T^d}>\tfrac{1}{3(2L+1)},\ \forall 1\leq j\leq J\right\} :=\mathcal K.
\end{align*}
Since $x\in \left(\frac{1}{2N_0+1} \T\right)^d= \left[-\frac{1}{2(2N_0+1)},\frac{1}{2(2N_0+1)} \right]^d$, for any $k\in \mcK$, we have 
\[\min_{1\leq j\leq J } \left\|x+\frac{k}{2N_0+1}-\theta_j \right\|_{\T^d}>\frac{1}{3(2L+1)}-\frac{1}{2(2N_0+1)} \geq \frac{1}{6(2L+1)},\]
and by \textbf{(A4)}, $h=\hatf$ and \eqref{Floquet eigenvalue and Floquet eigenbasis}, the corresponding Floquet eigenvalues on $\mcK$ satisfy
\begin{equation}\label{Floquet eigenvalue in mcK}
    E_k(x)=h \left(x+\frac{k}{2N_0+1}\right) \geq D_T \cdot \left(\frac{1}{6(L+1)}\right)^{d_T}, \forall k\in \mathcal K. 
\end{equation}
Therefore,
\begin{align*}
      \mcO_{+,\lambda}\left(\delta^{\frac16}\right)&\geq  \langle a, P a\rangle  \\
           & = \left\langle \widehat{a} ,P  \widehat{a}\right\rangle+  \left\langle \sum_{1\leq j\leq J}a^{(j)} ,P \sum_{1\leq j\leq J}a^{(j)} \right\rangle \\
           &\geq  \left\langle \widehat{a} ,P  \widehat{a}\right\rangle  \  \geq \ \min_{k\in \mathcal K} E_k(x)\cdot  \left\|  \widehat{a} \right\| ^2 \\
           &\geq D_T \cdot\left(\frac{1}{6(2L+1)}\right)^{d_T}  \left\|  \widehat{a} \right\| ^2,
\end{align*} 
which is equivalent to 
\begin{equation}\label{concentration of widehat a}
	\| \widehat{a} \| \leq \mcO_{+,\lambda,T} \left(\delta^{\frac{1}{12}} L^{\frac{d_T}{2}}\right).
\end{equation}
\eqref{concentration of widehat a} means that $a$ is concentrated in the union of the supports of $a^{(j)}$, which in some sense can be viewed as the neighbourhoods of $\theta_j$ on $\Lambda_{N_0}$. Substituting \eqref{concentration of widehat a} back into \eqref{approximate eigenvector of periodic V}, we obtain 
\begin{align}\label{the relation of a^j and potential}
	\notag  0& \leq \left\langle \sum_{1\leq j\leq J} a^{(j)},\lambda V_{N_0} \sum_{1\leq j\leq J} a^{(j)} \right\rangle = \left\langle a-\widehat{a},\lambda V_{N_0} (a-\wha)\right\rangle \\
	\notag    &= \left\langle a,\lambda V_{N_0} a\right\rangle +\left\langle \widehat{a},\lambda V_{N_0} \wha\right\rangle - 2\left\langle \wha,\lambda V_{N_0} a\right\rangle\\
	       &\leq \mcO_{+,\lambda} \left(\delta^{\frac{1}{6}} \right) + \mcO_{+,\lambda,T} \left(\delta^{\frac{1}{12}} L^{\frac{d_T}{2}}\right) = \mcO_{+,\lambda,T} \left(\delta^{\frac{1}{12}} L^{\frac{d_T}{2}}\right).
\end{align}
By our construction, $a^{(j)}$ is supported near $k_j$ under the Floquet eigenbasis, but the basis that diagonalizes $V_{N_0}$ is the (rotated) standard eigenbasis; therefore we must represent $a^{(j)}$ under the basis \eqref{rotated standard basis} as 
\begin{equation}\label{represent a^j by rotated standard basis}
	a^{(j)}=\sum_{l\in\La_{N_0}} \langle v_l,a^{(j)}\rangle v_l.
\end{equation}

In this representation, \eqref{the relation of a^j and potential} becomes
{\small

\begin{equation}\label{LDT event, without centering}
	0\leq \lambda\sum_{j=1}^{J} \sum_{l\in \Lambda_{N_0}}  V(l) \cdot |\langle v_l,a^{(j)}\rangle|^2 
                                      +2\lambda \ \Re \left( \sum_{1\leq j< j'\leq J} \sum_{l\in \Lambda_{N_0}}  V(l) \cdot \langle v_l,a^{(j)} \rangle \cdot \overline{\langle v_l,a^{(j')}\rangle}    \right) \leq \mcO_{+,\lambda,T} \left(\delta^{\frac{1}{12}} L^{\frac{d_T}{2}}\right) . 
\end{equation}
}
However, since \eqref{Bernoulli potential} gives $\E V(n)=1/2\neq 0$, we need to center the random potential in order to investigate hypercontractivity in the following steps. Let 
\begin{equation}\label{centering random potential}
	V=\frac{1-\wtV}{2}, \qquad \P(\wtV(n)=\pm 1)=\frac{1}{2} \quad \text{i.i.d.}
\end{equation}

Then $\E\wtV(n)=0$ and \eqref{the relation of a^j and potential} becomes 
\begin{align*}
	\left\langle \sum_{1\leq j\leq J} a^{(j)},\lambda \wtV_{N_0} \sum_{1\leq j\leq J} a^{(j)} \right\rangle & \geq \lambda \|a-\wha \|^2- \mcO_{+,\lambda,T} \left(\delta^{\frac{1}{12}} L^{\frac{d_T}{2}}\right) \\
	                &\geq \lambda(1-\| \wha\|^2)-\mcO_{+,\lambda,T} \left(\delta^{\frac{1}{12}} L^{\frac{d_T}{2}}\right)\\
					&\overset{\text{\eqref{concentration of widehat a}}}{\geq} \lambda -\mcO_{+,\lambda,T} \left(\delta^{\frac{1}{12}} L^{\frac{d_T}{2}}\right),
\end{align*}
which, again under the representation \eqref{represent a^j by rotated standard basis}, reads 
{\small
\begin{equation}\label{LDT event, centering}
	\lambda\sum_{j=1}^{J} \sum_{l\in \Lambda_{N_0}}  \wtV(l) \cdot |\langle v_l,a^{(j)}\rangle|^2 
                                      +2\lambda \ \Re \left( \sum_{1\leq j< j'\leq J} \sum_{l\in \Lambda_{N_0}}  \wtV(l) \cdot \langle v_l,a^{(j)} \rangle \cdot \overline{\langle v_l,a^{(j')}\rangle}    \right) \geq\lambda - \mcO_{+,\lambda,T} \left(\delta^{\frac{1}{12}} L^{\frac{d_T}{2}}\right) . 
\end{equation}
}

Since $\supp(a^{(j)})= \Lambda_K(k_j)$ (under the Floquet basis), we construct another $\widetilde{a}^{(j)}$ by shifting the center $k_j$ of the support to the origin, namely,
\begin{equation}\label{widetilde a^j}
    \widetilde{a}^{(j)}=\sum_{k\in \Lambda_K}a_{k+k_j} \cdot \beta_k(x) =(a^{(j)}_{k+k_j})^{\rm FB}_{k\in \La_{N_0}}.
\end{equation}
By \eqref{rotated standard basis} and \eqref{widetilde a^j}, simple computations show that 
\begin{equation}\label{rotation relationship between coefficients of a^j and widetilde a^j}
    \langle v_l,a^{(j)}\rangle = e^{-2\pi i l\cdot \frac{k_j}{2N_0+1} } \langle v_l,\widetilde{a}^{(j)}\rangle.
\end{equation}
Substituting \eqref{rotation relationship between coefficients of a^j and widetilde a^j} into \eqref{LDT event, centering} yields 
{\small
\begin{align}\label{LDT under coefficients of widetilde a^j, centering}
 \notag     \sum_{j=1}^{J} \sum_{l\in \Lambda_{N_0}} \wtV(l) \cdot |\langle v_l,\widetilde{a}^{(j)}\rangle|^2   +  2\Re \left( \sum_{1\leq j< j'\leq J} \sum_{l\in \Lambda_{N_0}} e^{-2\pi i  l\cdot\frac{k_j-k_{j'}}{2N_0+1}}  \ \wtV(l) \cdot \langle v_l,\widetilde{a}^{(j)}\rangle  \cdot \overline{\langle v_l,\wta^{(j')}\rangle}    \right) \\
         \hfill  \geq 1- \mcO_{+,\lambda,T} \left(\delta^{\frac{1}{12}} L^{\frac{d_T}{2}}\right) .
\end{align}
}

Next, we apply the uncertainty principle Lemma \ref{discrete uncertainty principle} to each $\wta^{(j)}$. From \eqref{rotated standard basis} and \eqref{discrete Fourier transform}, it is easy to see that 
\[
\langle v_l,\wta^{(j)}\rangle = (\mcF_{N_0} \wta^{(j)})_l
\]
is exactly the discrete Fourier transform of $\wta^{(j)}$ (under the coefficients of the Floquet eigenbasis) on $\La_{N_0}\simeq \Z^d_{2N_0+1}$.
Recall that we assumed \eqref{scales in uncertainty principle}. Since $\supp(\wta^{(j)}) \subset \Lambda_K$ for $1\leq j\leq J$, applying Lemma \ref{discrete uncertainty principle} yields some $b^{(j)}\in \ell^2(\Lambda_{N_0})$ such that
\begin{enumerate}
	\item $\|\wta^{(j)}\| = \|b^{(j)}\|$;
	\item $\|\wta^{(j)} - b^{(j)}\| \leq \mcO_+ (K/K')$;
	\item For $l' \in \Lambda_{L'}$ and $k' \in \Lambda_{K'}$, we have $\langle v_{l'+k'(2L'+1)}, b^{(j)}\rangle = \langle v_{k'(2L'+1)}, b^{(j)}\rangle$.
\end{enumerate}
Substituting $\wta^{(j)}$ by $b^{(j)}$ in \eqref{LDT under coefficients of widetilde a^j, centering} and using the Cauchy-Schwarz inequality yields (since the number of minimum points $J$ depends essentially on $T$)  
\begin{align}\label{LDT under coefficients of b^j, centering}
   \notag \sum_{j=1}^{J} \sum_{l\in \Lambda_{N_0}} \wtV(l) \cdot |\langle v_l,b^{(j)} \rangle |^2  & +2\Re \left( \sum_{1\leq j< j'\leq J} \sum_{l\in \Lambda_{N_0}} e^{-2\pi i  l\cdot\frac{k_j-k_{j'}}{2N_0+1}} \ \wtV(l) \cdot \langle v_l,b^{(j)} \rangle  \cdot \overline{\langle v_l,b^{(j')}\rangle}    \right) \\
      \notag     &\geq 1- \mcO_{+,\lambda,T} \left(\delta^{\frac{1}{12}} L^{\frac{d_T}{2}}\right)-3 J^2 \cdot \| \wtV\|_{\infty} \cdot \max_{1\leq j\leq J} \|\wta^{(j)} - b^{(j)}\| \\
       &\overset{\text{property (2) of $b^{(j)}$}}{\geq}  1- \mcO_{+,\lambda,T} \left(\delta^{\frac{1}{12}} L^{\frac{d_T}{2}}\right)- \mcO_{+,T} \left(K/K' \right).
\end{align}
Moreover, on the left-hand side of \eqref{LDT under coefficients of b^j, centering}, writing the summation index uniquely as 
\[\Lambda_{N_0}\ni l = l' + k'(2L'+1),\quad l'\in \Lambda_{L'},\; k'\in \Lambda_{K'}\]
and applying property (3) of $b^{(j)}$ yields 
{\small
\begin{align}\label{LDT under mcS, centering}
  \notag  \sum_{j=1}^{J} \sum_{k'\in \Lambda_{K'}} &  \mcS(j,j,k')  \cdot (2L'+1)^d |\langle v_{k'(2L'+1)},b^{(j)}\rangle |^2  \\
    \notag    +&\sum_{1\leq j< j'\leq J} \sum_{k'\in \Lambda_{K'}} 2\Re \left( \mcS(j,j',k')  \cdot (2L'+1)^d e^{-2\pi i k'\cdot \frac{k_j-k_{j'}}{2K'+1}} \langle v_{k'(2L'+1)},b^{(j)} \rangle \cdot \overline{\langle v_{k'(2L'+1)},b^{(j')}\rangle } \right)  \\
       &\geq  1- \mcO_{+,\lambda,T} \left(\delta^{\frac{1}{12}} L^{\frac{d_T}{2}}\right)- \mcO_{+,T} \left(K/K' \right),
  \end{align}
}
where 
\begin{equation}\label{mcS summation}
  \mcS(j,j',k')= \frac{1}{(2L'+1)^d}\sum_{l'\in \Lambda_{L'}} e^{-2\pi i l'\cdot \frac{k_j-k_{j'}}{2N_0+1}} \  \wtV(l'+k'(2L'+1)). 
\end{equation}
In addition, the left-hand side of \eqref{LDT under mcS, centering} can be controlled by: 
\begin{equation}\label{abs control of LDT under mcS}
  \text{LHS of } \eqref{LDT under mcS, centering} \leq \sum_{1\le j,j'\leq J} \sum_{k'\in \Lambda_{K'}}  |\mcS(j,j',k')|\cdot (2L'+1)^d \left|\langle v_{k'(2L'+1)},b^{(j)}\rangle \right| \cdot \left| \overline{\langle v_{k'(2L'+1)},b^{(j')}\rangle}\right|. 
\end{equation}
Now, recalling $|\frac{k_j}{2N_0+1}-\theta_j|\leq \frac{1}{2(2N_0+1)}$, we can define
\begin{equation}\label{mcN summation}
  \mcN(j,j',k')=\frac{1}{(2L'+1)^d}\sum_{l'\in \Lambda_{L'}} e^{-2\pi i  l' \cdot(\theta_j-\theta_{j'}) } \ \wtV(l'+k'(2L'+1)),
\end{equation}
and thus for each $j,j'$ and $k'$,
\begin{align}\label{difference between mcS and mcN}
   \notag   |\mcS(j,j',k')-\mcN(j,j',k')| & \lesssim_d \frac{1}{(2L'+1)^d} \sum_{l'\in \Lambda_{L'}}|l'|\cdot \left |\frac{k_j-k_{j'}}{2N_0+1}-(\theta_j-\theta_{j'})\right| \\
                         & = \mcO_{+,d}\left(\frac{L'}{2N_0+1}\right) = \mcO_{+,d} \left(\frac{1}{K'}\right).
\end{align}
So combining \eqref{LDT under mcS, centering}, \eqref{abs control of LDT under mcS} and \eqref{difference between mcS and mcN} shows  
\begin{align*}
 &\sum_{1\le j,j'\leq J} \sum_{k'\in \Lambda_{K'}}  |\mcN(j,j',k')|\cdot (2L'+1)^d \left|\langle v_{k'(2L'+1)},b^{(j)}\rangle \right| \cdot \left| \overline{\langle v_{k'(2L'+1)},b^{(j')}\rangle}\right|\\
   &\geq 1- \mcO_{+,\lambda,T} \left(\delta^{\frac{1}{12}} L^{\frac{d_T}{2}}\right)- \mcO_{+,T} \left(K/K' \right)\\
                        &\qquad -\mcO_{+,d}\left(\frac{1}{K'}\right) \cdot \sup_{1\leq j,j'\leq J} \sum_{k'\in \Lambda_{K'}}  (2L'+1)^d \left|\langle v_{k'(2L'+1)},b^{(j)}\rangle \right| \cdot \left| \overline{\langle v_{k'(2L'+1)},b^{(j')}\rangle}\right|.   
\end{align*}
It can be easily seen from properties (1) and (3) of $b^{(j)}$ that 
\begin{equation}\label{weight on b^j}
\sup_{1\leq j,j'\leq J} \sum_{k'\in \Lambda_{K'}}  (2L'+1)^d \left|\langle v_{k'(2L'+1)},b^{(j)}\rangle \right| \cdot \left| \overline{\langle v_{k'(2L'+1)},b^{(j')}\rangle}\right|   \leq \sup_{1\leq j,j'\leq J} \|\wta^{(j)}\| \cdot \|\wta^{(j')}\| \leq 1.
\end{equation}
Thus,
\begin{align}\label{LDT under mcN, centering}
 \notag \sum_{1\le j,j'\leq J} \sum_{k'\in \Lambda_{K'}} & |\mcN(j,j',k')|\cdot (2L'+1)^d \left|\langle v_{k'(2L'+1)},b^{(j)}\rangle \right| \cdot \left| \overline{\langle v_{k'(2L'+1)},b^{(j')}\rangle}\right|\\
   &\geq 1- \mcO_{+,\lambda,T} \left(\delta^{\frac{1}{12}} L^{\frac{d_T}{2}}\right)- \mcO_{+,T} \left(K/K' \right)  -\mcO_{+,d}\left(\frac{1}{K'}\right) .
\end{align}
Finally, by \eqref{weight on b^j} we have 
\begin{equation}
	 \notag \sum_{1\le j,j'\leq J} \sum_{k'\in \Lambda_{K'}} (2L'+1)^d \left|\langle v_{k'(2L'+1)},b^{(j)}\rangle \right| \cdot \left| \overline{\langle v_{k'(2L'+1)},b^{(j')}\rangle}\right| \leq J^2,
\end{equation}
and therefore by the pigeonhole principle applied to \eqref{LDT under mcN, centering} we finally deduce that 
\begin{equation}\label{final deduction on mcN}
  \sup_{1\leq j,j'\leq J \atop k'\in \Lambda_{K'}} |\mcN(j,j',k')|\geq \frac{1}{J^2} \left(    1- \mcO_{+,\lambda,T} \left(\delta^{\frac{1}{12}} L^{\frac{d_T}{2}}\right)- \mcO_{+,T} \left(K/K' \right)  -\mcO_{+,d}\left(\frac{1}{K'}\right)      \right)
\end{equation}
under the assumption \eqref{reverse assumption in initial scale}.

\noindent \textcolor{blue}{\large \textbf{(Step 4: determine the scales and parameters.)}} \\
In view of \eqref{final deduction on mcN}, we must choose the parameters $\delta,N_0$ and the scales in \eqref{scales in uncertainty principle} as follows: 
  \begin{enumerate}
    \item $2N_0+1=(2L+1)(2K+1)=(2L'+1)(2K'+1)$;
    \item $K/K'\ll_T 1$, $K'\gg_d 1$;
    \item $\delta^{\frac{1}{12}} L^{\frac{d_T}{2}}\ll_{\lambda,T} 1$. 
  \end{enumerate} 
 The above conditions can be fulfilled by taking the initial scale $N_{\rm in}\gg_{\lambda,T,d,C_{\rm in}} 1$ and by the following choice of parameters and scales:
  \begin{itemize} 
	\item $\delta=(\log N_{\rm in})^{-6000d_T}\ll_{\lambda,T,d} 1$;
    \item $L=\lfloor \delta^{-\frac{1}{24}\cdot \frac{2}{d_T}}\rfloor \sim (\log N_{\rm in})^{500} \ \Longrightarrow \delta^{\frac{1}{12}} L^{\frac{d_T}{2}} \sim  \delta^{\frac{1}{24}} =(\log N_{\rm in})^{-250 d_T}\ll_{\lambda,T} 1$;
    \item $L'=\lfloor\delta^{-\frac{1}{48}\cdot \frac{2}{d_T}} \rfloor \sim (\log N_{\rm in})^{250} \ \Longrightarrow \frac{K}{K'}\sim \frac{L'}{L}\sim \delta^{\frac{1}{48}\cdot \frac{2}{d_T}}\sim (\log N_{\rm in})^{-250} \ll_T 1$;
    \item $N_0\in {\rm Scale}_{\rm in}$ with 
          \[  {\rm Scale}_{\rm in}:=\left\{N\in\Z:\ N_{\rm in}\leq N\leq N_{\rm in}^{C_{\rm in}}, \ \frac{2N+1}{(2L'+1)(2L+1)}\in \Z_+  \right\}, \]
		  so that $2N_0+1$ satisfies the divisibility condition (1) with $(2L+1)$ and $(2L'+1)$;
	\item $K\sim\frac{N_0}{L} \sim_{C_{\rm in}} N_0 (\log N_0)^{-500}, \ K'\sim \frac{N_0}{L'}\sim_{C_{\rm in}} N_0 (\log N_0)^{-250}\gg_d 1$.
   \end{itemize}
  With the above chosen parameters and scales, \eqref{final deduction on mcN} becomes 
  \begin{equation}\label{final deduction on mcN, after choice of parameters}
  \sup_{1\leq j,j'\leq J \atop k'\in \Lambda_{K'}} |\mcN(j,j',k')|\geq \frac{1}{2J^2}.
 \end{equation}

\noindent \textcolor{blue}{\large \textbf{(Step 5: estimate the probability.)}} \\
Since we have already shown that \eqref{reverse assumption in initial scale} implies \eqref{final deduction on mcN, after choice of parameters}, we define 
\begin{equation}\label{final event Omega_N0 at initial scale}
  \Xi_{N_0}=\left\{ \omega :\  \sup_{1\leq j,j'\leq J \atop k'\in \Lambda_{K'}}|\mcN(j,j',k')|\geq \frac{1}{2J^2}   \right\},
\end{equation}
which depends only on the randomness in $\La_{N_0}$, and therefore
\begin{equation}\label{reverse assumption deduce Omega_N0}
  \left\{ \omega: \exists \ E \in [0,\delta] \ {\rm such \ that \ \eqref{reverse assumption in initial scale} \ holds} \right\} \subset \Xi_{N_0}. 
\end{equation}
Next we use some hypercontractivity inequality for sub-Gaussian distributions to estimate the probability of $\Xi_{N_0}$. For some preliminary knowledge, one can refer to \cite[Appendix D]{LSZ25}.

Now recall the centered random potential \eqref{centering random potential} and \eqref{mcN summation}. The standard Dudley $L^{\psi_2}$ estimate (\cite[Theorem D.2]{LSZ25}) yields that  
\begin{equation*}
	  \left\|  \sup_{1\leq j,j'\leq J \atop k'\in \Lambda_K'}|\mcN(j,j',k')|  \right\|_{\psi_2}  \lesssim \sqrt{\log(J^2 \cdot \#\Lambda_{K'})} \cdot \sup_{1\leq j,j'\leq J \atop k'\in \Lambda_K'} \|\mcN(j,j',k')\|_{\psi_2},
\end{equation*}
and the orthogonality of independent mean-zero sub-Gaussian random variables (\cite[Theorem D.3]{LSZ25}) yields that 
\begin{align*}
	\|\mcN(j,j',k')\|_{\psi_2} &= \frac{1}{(2L'+1)^d} \left\| \sum_{l'\in \Lambda_{L'}} e^{-2\pi i  l' \cdot(\theta_j-\theta_{j'}) } \ \wtV(l'+k'(2L'+1)) \right\|_{\psi_2} \\
	     &\lesssim (2L'+1)^{-d}  \left( \sum_{l'\in \La_{L'}} \|\wtV(l'+k'(2L'+1)) \|_{\psi_2}^2\right)^{\frac{1}{2}}\\
		 & \lesssim (2L'+1)^{-d/2}
\end{align*} 
for every $j,j'$ and $k'$. Thus,
\begin{equation}\label{Orlicz norm of sup of mcN}
	\left\|  \sup_{1\leq j,j'\leq J \atop k'\in \Lambda_K'}|\mcN(j,j',k')|  \right\|_{\psi_2} \lesssim_{T,d} \sqrt{\log K'} (2L'+1)^{-d/2},
\end{equation}
and the Hoeffding inequality (\cite[Theorem D.1]{LSZ25}), together with our choice of parameters, yields
\begin{equation}\label{probability estimate on Omega_N0}
	\P(\Xi_{N_0}) \leq 2\exp\left\{ -\frac{1/(2J^{2})}{\mcO_{+,T,d}\left(   \sqrt{\log K'} (2L'+1)^{-d/2}  \right)}\right\}\leq e^ { - (\log N_0)^{10} }.
\end{equation}

\noindent \textcolor{blue}{\large \textbf{(Step 6: return to the Neumann series expansion \eqref{Neumann expansion}.)}} \\
\eqref{reverse assumption deduce Omega_N0} reveals that for $\omega\notin \Xi_{N_0}$, 
\begin{equation}\label{the previous estimate to apply Neumann series expansion}
\left\| \left[  (1-T_{N_0})(\lambda V_{N_0}+1-E)^{-1} \right]^2 \right\| \leq 1-\delta		
\end{equation}
holds uniformly for all $E\in [0,\delta]$. Now, substituting \eqref{the previous estimate to apply Neumann series expansion} into the Neumann series expansion \eqref{Neumann expansion} and applying \eqref{about 1-T} and \eqref{about reverse potential} gives  
\begin{equation}\label{L2 norm from Neumann}
     \| G_{N_0} (E)\|   \leq \left( \frac{1}{1-\delta}+\frac{1}{(1-\delta)^2}\right)\cdot  \sum_{s\geq 0}(1-\delta)^s \lesssim \delta^{-1}\sim (\log N_0)^{6000d_T}.
\end{equation}
This proves \eqref{L2 norm estimate on Green's function, initial scale}.

Next, let $A$ be determined later and decompose the sum into the following two parts:
\begin{align*}
      |G_{N_0}(E)(n,n')| &\leq \sum_{s\geq 0} \left| (\lambda V_{N_0}+1-E)^{-1} \big((1-T_{N_0})(\lambda V_{N_0}+1-E)^{-1} \big)^s (n,n') \right|\\
           &= \left(\sum_{s< A}+\sum_{s \geq  A} \right)\cdots. 
\end{align*}
For the $s\geq A$ part, an argument similar to that used to deduce \eqref{L2 norm from Neumann} yields
\begin{equation}\label{estimate for s geq A}
	\sum_{s\geq A} \cdots \lesssim \sum_{s\geq A/2}(1-\delta)^s \lesssim \delta^{-1}(1-\delta)^{A/2} \leq \delta^{-1} e^{-\delta A/2}.
\end{equation}
For the $s< A$ part, assumption \textbf{(A2)} ensures \eqref{exp decay hopping}, and therefore  
\[
|(1-T_{N_0})(m,m')|\leq (C_T+1)e^{-c_T |m-m'|}.
\] 
Thus,
\begin{align*}
       &\ \ \  \left| (\lambda V_{N_0}+1-E)^{-1} \big((1-T_{N_0})(\lambda V_{N_0}+1-E)^{-1} \big)^s (n,n') \right| \\
          & \leq \left(\frac{1}{1-\delta} \right)^{s+1}  \sum_{n_1,n_2, \cdots,n_{s-1}\in \Lambda_{N_0}}  |(1-T_{N_0})(n,n_1)|\cdot |(1-T_{N_0})(n_1,n_2)|\cdots |(1-T_{N_0})(n_{s-1},n')| \\
          &\leq  \left(\frac{1}{1-\delta}\right)^{s+1} (C_T+1)^s  \sum_{n_1,n_2 \cdots,n_{s-1}\in \Lambda_{N_0}}  \exp\Bigl\{ -c_T\bigl(|n-n_1|+|n_1-n_2|+\cdots+|n_{s-1}-n'|\bigr) \Bigr\} \\
          &\lesssim \left(\frac{C_T+1}{1-\delta}\right)^s (\# \Lambda_{N_0})^{s-1} e^{-c_T|n-n'|}  \ \leq  \mcO_{+,T,d}\left(N_0^d\right)^s e^{-c_T |n-n'|},
\end{align*}
which implies
\begin{equation}\label{estimate for s < A}
      \sum_{s< A} \cdots  \leq \sum_{s < A} \mcO_{+,T,d}\left(N_0^d\right)^s  e^{-c_T|n-n'|} \leq  \mcO_{+,T,d}\left(N_0^d\right)^A e^{-c_T|n-n'|}.
\end{equation}
Combining \eqref{estimate for s geq A} and \eqref{estimate for s < A} and setting $A= \frac{N_0}{(\log N_0)^2}$ yields that for every $|n-n'|\geq \frac{N_0}{200}$, 
\begin{align}\label{off diagonal decay from Neumann}
    \notag  |G_{N_0}(E)(n,n')| &\lesssim \frac{1}{\delta} e^{-\delta A/2} + \mcO_{+,T,d}\left(N_0^d\right)^A e^{-c_T|n-n'|}\\
       \notag &= \exp\bigg\{ -\delta A/2 + 6000d_T \log\log N_0  \bigg\} + \exp \bigg\{ -c_T|n-n'| + A \cdot \mcO_{+,T,d}(\log N_0) \bigg\} \\
        & \leq \exp\bigg\{ -\frac{2N_0}{(\log N_0)^{7000 d_T}} \bigg\} \leq \exp\bigg\{ -\frac{|n-n'|}{(\log N_0)^{7000 d_T}} \bigg\},
\end{align}
where in the above inequality we used $N_0\geq N_{\rm in}\gg_{\lambda,T,d,C_{\rm in}} 1$ and $\frac{N_0}{200}\leq |n-n'|\leq 2N_0$. This proves \eqref{off diagonal decay estimate on Green's function, initial scale}.

\end{proof}

\subsection{Effect of multiple minima: comparison with previous works}\label{subsection: multiple minima}
We make some remarks below to compare the proof of Theorem \ref{LDT for the initial scale, without free sites} with the elliptic analysis at the initial scale in previous works.

It is easy to see from \textcolor{blue}{\textbf{(Step 6)}} that, in order to ensure \eqref{L2 norm estimate on Green's function, initial scale} and \eqref{off diagonal decay estimate on Green's function, initial scale}, it suffices to ensure \eqref{the previous estimate to apply Neumann series expansion}. Simultaneously, the proof also shows that the failure of \eqref{the previous estimate to apply Neumann series expansion} (i.e., the validity of \eqref{reverse assumption in initial scale}) implies \eqref{LDT event, without centering}.

Now, via an argument similar to that used to deduce \eqref{LDT under mcS, centering} from \eqref{LDT event, centering}, one may prove from \eqref{LDT event, without centering} that 
{\small
\begin{align}\label{LDT under mfS, without centering}
  \notag  0\leq \sum_{j=1}^{J}  &  \sum_{k'\in \Lambda_{K'}}   \mfS(j,j,k')  \cdot (2L'+1)^d |\langle v_{k'(2L'+1)},b^{(j)}\rangle |^2  \\
    \notag    +&\sum_{1\leq j< j'\leq J} \sum_{k'\in \Lambda_{K'}} 2\Re \left( \mfS(j,j',k')  \cdot (2L'+1)^d e^{-2\pi i k'\cdot \frac{k_j-k_{j'}}{2K'+1}} \langle v_{k'(2L'+1)},b^{(j)} \rangle \cdot \overline{\langle v_{k'(2L'+1)},b^{(j')}\rangle } \right)  \\
       &\leq  \mcO_{+,\lambda,T} \left(\delta^{\frac{1}{12}} L^{\frac{d_T}{2}}\right)+ \mcO_{+,T} \left(K/K' \right),
  \end{align}
} 
where 
\begin{equation}\label{mfS summation, without centering}
  \mathfrak{S}(j,j',k')= \frac{1}{(2L'+1)^d}\sum_{l'\in \Lambda_{L'}} e^{-2\pi i l'\cdot \frac{k_j-k_{j'}}{2N_0+1}} \  V(l'+k'(2L'+1)). 
\end{equation}
This is what we obtain \textcolor{red}{\textbf{without centering the random potential as \eqref{centering random potential}}}.

When the hopping operator $T$ is the simplest case, i.e., the free Laplacian on $\Z^d$ considered in \cite{BK05,DS20,LZ22}, an important feature is that its Fourier symbol has only a unique minimum point $\theta_1 = 0$, i.e., $J=1$ in \textbf{(A4)}. In this unique minimum case, \eqref{LDT under mfS, without centering} becomes 
\begin{equation}\label{LDT under mfS, without centering, unique minimum}
	0\leq \sum_{k'\in \Lambda_{K'}}   \mfS(1,1,k')  \cdot (2L'+1)^d |\langle v_{k'(2L'+1)},b^{(1)}\rangle |^2 \leq  \mcO_{+,\lambda,T} \left(\delta^{\frac{1}{12}} L^{\frac{d_T}{2}}\right)+ \mcO_{+,T} \left(K/K' \right),
\end{equation}
and \eqref{mfS summation, without centering} becomes
\[
\mfS(1,1,k')= \frac{1}{(2L'+1)^d}\sum_{l'\in \Lambda_{L'}}  V(l'+k'(2L'+1)). 
\]
Moreover, properties (1) and (3) of $b^{(1)}$, together with \eqref{concentration of widehat a}, yield
\begin{equation}\label{norm of b^1}
\sum_{k'\in \Lambda_{K'}}  (2L'+1)^d \left|\langle v_{k'(2L'+1)},b^{(1)}\rangle \right|^2 = \|\wta^{(1)}\|^2 = 1 - \mcO_{+,\lambda,T} \left(\delta^{\frac{1}{12}} L^{\frac{d_T}{2}}\right) \geq \frac{1}{2},
\end{equation}
Hence we can apply the pigeonhole principle to \eqref{LDT under mfS, without centering, unique minimum} together with \eqref{norm of b^1} to deduce 
\begin{equation}\label{final event without centering}
	\inf_{k'\in \La_{K'}} \mfS(1,1,k') \leq \mcO_{+,\lambda,T} \left(\delta^{\frac{1}{12}} L^{\frac{d_T}{2}}\right)+ \mcO_{+,T} \left(K/K' \right)\leq  (\log N_0)^{-200}
\end{equation}
by our choice of parameters and scales in \textcolor{blue}{\textbf{(Step 4)}}. In a nutshell, we have shown that in the unique minimum case, if \eqref{L2 norm estimate on Green's function, initial scale} or \eqref{off diagonal decay estimate on Green's function, initial scale} fails, then we obtain the event \eqref{final event without centering}. Hence, we have proved that 
\begin{equation}\label{average summation event in BK05, without centering}
	\inf_{k'\in \La_{K'}} \frac{1}{(2L'+1)^d}\sum_{n\in \Lambda_{L'}(k'(2L'+1))}  V(n) > (\log N_0)^{-200}
\end{equation}
implies the validity of \eqref{L2 norm estimate on Green's function, initial scale} and \eqref{off diagonal decay estimate on Green's function, initial scale}.

We compare \eqref{average summation event in BK05, without centering} with the condition to deduce the Green's function estimate at the initial scales in previous works: 
\begin{itemize}
	\item (Compared with \cite{BK05}) The event \eqref{average summation event in BK05, without centering} is exactly same with \cite[(4.13)]{BK05}.
	\item (Compared with \cite{DS20,LZ22}) We recall the following result on free Laplacian:
	  \begin{thm}\label{R net theorem from DS20}
			\textup{(\cite[Lemma 7.2]{DS20} and \cite[Corollary B.3]{LZ22})} Let $T$ be the free Laplacian on $\Z^d$. Assume that the set $\{V=1\}$ is an $R$-net in $\Lambda_{N_0}$. Then we have
			\begin{enumerate}
				\item The principal eigenvalue of $H_{N_0}$ satisfies 
				     \[E_{\rm prin} \left(H_{N_0}\right) \geq  L(\lambda,R,d),\]
					 where 
					 \begin{equation}\label{principle eigenvalue}
						 L(\lambda, R,d):=\begin{cases}
							 C_1(2) \lambda \cdot R^{-2}(\log R)^{-1}, & d=2;\\
							 C_1(d) \lambda \cdot R^{-d}, & d\geq 3
						 \end{cases}
					 \end{equation}
					 with some numerical constant $C_1(d)>0$ depending only on the dimension $d$.
				\item For any energy $0\leq E\leq L(\lambda,R,d)/2$, we have the Green's function estimate 
				    \begin{equation}\label{principle Green's function}
						\left| G_{N_0}(E) (x,y)\right|\leq \frac{2}{L(\lambda,R,d)}\exp \left\{ -c\,L(\lambda,R,d) |x-y| \right\} ,\ \forall x,y\in \La_{N_0}
					\end{equation}
					for some numerical constant $c>0$.
			\end{enumerate}
	  \end{thm}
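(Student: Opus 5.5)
The proof splits into a variational lower bound for the quadratic form of $H_{N_0}$, which gives part (1), followed by a Combes--Thomas argument, which gives part (2). Since the free Laplacian $\Delta_{\rm free}$ (as normalized in the Notations) is nonnegative and Dirichlet restriction only removes positive boundary contributions, it suffices to prove the following for every $u\in\ell^2(\La_{N_0})$ extended by zero:
\[
\langle u,\Delta_{\rm free}u\rangle+\lambda\sum_{x\in\La_{N_0},\,V(x)=1}|u(x)|^2\;\ge\;L(\lambda,R,d)\,\|u\|^2 .
\]
Here $\langle u,\Delta_{\rm free}u\rangle=\sum_{x\sim y}|u(x)-u(y)|^2$ is the Dirichlet energy $\mathcal E(u)$. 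Without loss of generality $\lambda\le 1$; otherwise I replace $\lambda$ by $1$ and absorb the difference into $C_1(d)$.

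\textbf{Local estimate.} I cover $\La_{N_0}$ by cubes $Q$ of side $\sim 3R$ with bounded overlap (depending only on $d$). By the $R$-net assumption, each $Q$ contains a point $p_Q$ with $V(p_Q)=1$. On each cube I write $|u(x)|^2\le 3|u(x)-\bar u_Q|^2+3|\bar u_Q-u(p_Q)|^2+3|u(p_Q)|^2$, where $\bar u_Q$ is the average of $u$ over $Q$, and I control the three terms as follows.
\begin{enumerate}
\item The discrete Poincar\'e inequality on a cube gives $\sum_{x\in Q}|u(x)-\bar u_Q|^2\lesssim_d R^2\,\mathcal E_Q(u)$.
\item A pointwise-from-energy bound gives $|u(p_Q)-\bar u_Q|^2\lesssim_d g_R\,\mathcal E_Q(u)$, with $g_R=1$ for $d\ge3$ and $g_R=\log R$ for $d=2$. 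This comes from representing $u(p_Q)-\bar u_Q$ through the Neumann Green's function of the cube, equivalently from the discrete Sobolev embedding in $d\ge3$ and its logarithmic failure in $d=2$. It is the source of the $(\log R)^{-1}$ in \eqref{principle eigenvalue}.
\item The last term is kept as is.
\end{enumerate}
Summing over $x\in Q$ gives
\[
\|u\|_{\ell^2(Q)}^2\lesssim_d \bigl(R^2+R^dg_R\bigr)\mathcal E_Q(u)+R^d|u(p_Q)|^2\lesssim_d R^dg_R\bigl(\mathcal E_Q(u)+\lambda|u(p_Q)|^2\bigr)\lambda^{-1}.
\]
Summing over cubes with bounded overlap then yields the quadratic form bound with $L(\lambda,R,d)=C_1(d)\lambda R^{-d}g_R^{-1}$, which is part (1).

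\textbf{Green's function estimate.} For $0\le E\le L/2$ we have $H_{N_0}-E\ge L/2$, so $\|G_{N_0}(E)\|\le 2/L$. For the decay, fix $y$ and set $\rho(x)=|x-y|_1$. I conjugate $H_{N_0}$ by $e^{\mu\rho}$; since $\rho$ is $1$-Lipschitz and the hopping is nearest-neighbour, $e^{\mu\rho}H_{N_0}e^{-\mu\rho}=H_{N_0}+B_\mu$ with $\|B_\mu\|\le C(d)\mu$ for $\mu\le1$. Choosing $\mu=c\,L$ with $c$ small makes $\|B_\mu\|\le L/4$, hence $\|(e^{\mu\rho}H_{N_0}e^{-\mu\rho}-E)^{-1}\|\le 4/L$. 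Taking the $(x,y)$ matrix entry gives $|G_{N_0}(E)(x,y)|\le (4/L)e^{-cL|x-y|}$. To recover the stated prefactor $2/L$ I would either use the pointwise bound $|G(x,y)|\le\|G\|\le2/L$ for small $|x-y|$ and split into cases, or shrink $c$ so that $\|B_\mu\|\le\varepsilon L$ and accept a slightly smaller decay constant; either way this is routine bookkeeping.

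\textbf{Main obstacle.} I expect the hardest step to be the sharp pointwise-from-energy bound in $d=2$ with the correct $\log R$ loss. I would prove it by bounding the difference between $u(p_Q)$ and its average over dyadic sub-cubes centred at $p_Q$. Poincar\'e on each dyadic shell controls the jump between consecutive scales by $\mathcal E$ restricted to that shell, with a scale-free constant in $d=2$. Summing over the $\log R$ dyadic scales and applying Cauchy--Schwarz then produces the factor $g_R=\log R$.
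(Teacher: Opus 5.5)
Your proof is correct in substance, but it does not follow the paper, because the paper gives no proof of this statement: it is quoted from \cite{DS20} and \cite{LZ22}. The closest argument in the paper is the proof of Theorem~\ref{R net theorem for general hopping with unique minimum} in Appendix~\ref{appendix: proof of R net argument}.
\begin{itemize}
\item There, part (1) is obtained by reduction to the present statement: $\hat f\ge\kappa g$ gives $T+\lambda V\ge(\kappa\wedge1)(\Delta_{\rm free}+\lambda V)$, and then min--max is applied.
\item Part (2) is obtained by exactly your Combes--Thomas argument: conjugation by $e^{\alpha|n-y|}$, a Schur bound $\lesssim\alpha$ on the perturbation, and a Neumann series.
\end{itemize}
So your part (2) is the paper's. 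Your part (1) supplies from scratch what the paper outsources, namely local Poincar\'e plus a point-to-average bound on $R$-cubes. This makes visible that the $(\log R)^{-1}$ is the two-dimensional capacity of a point. Your use of disjoint dyadic shells with Cauchy--Schwarz is exactly what gives the sharp $\log R$ rather than the $(\log R)^2$ that nested cubes would give. Because your argument only uses the quadratic form, it also proves the appendix generalization directly for any $T\ge\kappa\Delta_{\rm free}$.

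\textbf{First correction: the reduction to $\lambda\le1$.} Your step ``without loss of generality $\lambda\le1$, absorbing the difference into $C_1(d)$'' is not valid. An unbounded factor $\lambda$ cannot be absorbed into a constant depending only on $d$. In fact the bound with the factor $\lambda$ fails for large $\lambda$. Take $\{V=1\}=R\Z^d\cap\La_{N_0}$ with $N_0\gg R^d$, and the test function equal to $0$ on $\{V=1\}$ and $1$ elsewhere in $\La_{N_0}$. This gives $E_{\rm prin}(H_{N_0})\lesssim_d R^{-d}$ uniformly in $\lambda$. What you actually prove is the bound with $\lambda\wedge1$ in place of $\lambda$. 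That is the correct form of the statement, and it is all that matters here since $\lambda$ is fixed.

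\textbf{Second correction: the prefactor $2/L$.} Neither of your two fixes gives exactly $2/L$. Shrinking $c$ only yields $2/((1-2\varepsilon)L)$. And for $x\neq y$, the bound $\|G_{N_0}(E)\|\le 2/L$ does not dominate $(2/L)e^{-cL|x-y|}$. If you want the exact constant, use the off-diagonal bound $|G_{N_0}(E)(x,y)|\le 1/L$ for $x\neq y$. It follows from writing $G_{N_0}(E)(x,y)=\sum_j\bigl((E_j-E)^{-1}-L^{-1}\bigr)\psi_j(x)\psi_j(y)$, which is valid because $\sum_j\psi_j(x)\psi_j(y)=0$ for $x\neq y$, and then using $(E_j-E)^{-1}\in(0,2/L]$. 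Combine this with your Combes--Thomas bound at twice the rate. In any case this is cosmetic, since the paper's own appendix version only obtains $\lesssim_{T,d}L^{-1}$.
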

         Now we show that, in some sense, \eqref{average summation event in BK05, without centering} is a weaker condition than the R-net condition in Theorem \ref{R net theorem from DS20}. Assume $\{V=1\}$ is an R-net in $\La_{N_0}$. Since by \eqref{Bernoulli potential} each $V(n)$ is either $1$ or $0$, we can deduce that  
          \begin{align*}
	         \inf_{k'\in \La_{K'}} \frac{1}{(2L'+1)^d}\sum_{n\in \Lambda_{L'}(k'(2L'+1))}  V(n) &= \inf_{k'\in \La_{K'}} \frac{\# \left(\Lambda_{L'}(k'(2L'+1))\cap \{V=1\}\right)}{\# \Lambda_{L'}(k'(2L'+1)) }\\
	           &\gtrsim_d R^{-d} >(\log N_0)^{-200}
         \end{align*}
            if we take $R=\lfloor (\log N_0)^{\frac{100}{d}}\rfloor$. Therefore, \eqref{average summation event in BK05, without centering} is fulfilled, and then the Green's function estimates \eqref{L2 norm estimate on Green's function, initial scale} and \eqref{off diagonal decay estimate on Green's function, initial scale} can be established.
\end{itemize}
However, the above discussion is valid only in the case of a unique minimum point; it breaks down when there are multiple minima. In the multiple minima case, \textcolor{red}{\textbf{resonances among the different minima give rise to complex-valued coupling coefficients}} $e^{-2\pi i k'\cdot \frac{k_j-k_{j'}}{2K'+1}}$ in \eqref{LDT under mfS, without centering}, which prevent us from applying the corresponding pigeonhole principle directly. This forces us to first center the random potential.\\

Similar issues arise in the R-net argument. Indeed, we prove (in Appendix \ref{appendix: proof of R net argument}) the following generalization of Theorem \ref{R net theorem from DS20} to all hopping operators $T$ (not only the free Laplacian) whose Fourier symbol has only a unique minimum.
\begin{thm}\label{R net theorem for general hopping with unique minimum}
			Let $T$ on $\Z^d$ satisfy \textup{\textbf{(A1), (A2)}}, and let its Fourier symbol $\hatf(\theta)\geq 0$ attain its minimum uniquely at $\hatf(0)=0$ with the elliptic condition
                      \[
                      \operatorname{Hess}(\hatf)(0)>0.
                      \]             
			Assume that the set $\{V=1\}$ is an $R$-net in $\Lambda_{N_0}$. Then the results in Theorem \ref{R net theorem from DS20} also hold.
\end{thm}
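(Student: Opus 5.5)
The plan is to reduce to the free Laplacian case of Theorem \ref{R net theorem from DS20} by a comparison of quadratic forms. Since $\hatf\ge 0$ is real-analytic with a unique zero at $0$ and $\operatorname{Hess}(\hatf)(0)>0$, Taylor expansion near $0$ and compactness of $\T^d$ away from $0$ give
\[
\hatf(\theta)\;\ge\; c_T \sum_{i=1}^d \bigl(2-2\cos(2\pi\theta_i)\bigr),\qquad \theta\in\T^d,
\]
for some $c_T>0$: near $0$ both sides are comparable to $|\theta|^2$, and away from $0$ the left side is bounded below while the right side is at most $4d$. By Plancherel this reads $T\ge c_T\Delta_{\rm free}$ as operators on $\ell^2(\Z^d)$. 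Restricting to $\ell^2(\La_{N_0})$, i.e. testing on vectors supported in $\La_{N_0}$, yields $T_{N_0}\ge c_T(\Delta_{\rm free})_{N_0}$ with the same Dirichlet-type restriction used in \cite{DS20,LZ22}. Hence
\[
H_{N_0}=T_{N_0}+\lambda V_{N_0}\;\ge\; c_T\Bigl((\Delta_{\rm free})_{N_0}+\tfrac{\lambda}{c_T}V_{N_0}\Bigr).
\]
Applying part (1) of Theorem \ref{R net theorem from DS20} with coupling $\lambda/c_T$ then bounds the principal eigenvalue of $H_{N_0}$ from below by $c_T L(\lambda/c_T,R,d)$, which has the same form \eqref{principle eigenvalue} after adjusting $C_1(d)$ to a constant depending on $T$. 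If one insists on a purely numerical constant, I would instead rerun the DS20 argument directly, since it only uses a Poincaré/capacity-type inequality for the Dirichlet form, and the comparison transfers that inequality.

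For part (2), I would take $0\le E\le L/2$, where $L=L(\lambda,R,d)$ is the bound just obtained. Then $H_{N_0}-E\ge L/2$, so $\|G_{N_0}(E)\|\le 2/L$. For the off-diagonal decay the short-range argument (Combes–Thomas) no longer applies verbatim, because $T$ has infinite range. I would use a Combes–Thomas estimate adapted to exponentially decaying hopping. Conjugate by $e^{\mu\langle\cdot, e\rangle}$ with $\mu=cL$. The hopping bound \eqref{exp decay hopping} gives
\[
\|e^{\mu x\cdot e}\,T\,e^{-\mu x\cdot e}-T\|\lesssim_T \mu
\]
for $\mu\le c_T/2$, since this is a Schur-test bound over $\sum_n |f(n)|(e^{\mu|n|}-1)$. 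A perturbation then keeps the conjugated operator invertible with norm at most $4/L$ once $\mu\lesssim_T L$. This yields \eqref{principle Green's function} with $c$ depending on $T$.

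The main obstacle is whether the stated constants are meant to be numerical or only $T$-dependent. With the comparison approach they inevitably become $T$-dependent, through $c_T$ and the Combes–Thomas constant, and I would state this explicitly. A secondary check is that the comparison $T\ge c_T\Delta_{\rm free}$ survives restriction to $\La_{N_0}$ in the precise boundary convention used by \cite{DS20}. If their Laplacian includes boundary terms, the inequality still holds because both operators are positive and are restricted by the same compression $R_{\La_{N_0}}\cdot R_{\La_{N_0}}$.
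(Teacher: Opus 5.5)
Your proposal is correct and follows the paper's proof essentially step by step. It uses the same three ingredients: the symbol comparison $\hatf\ge\kappa\sum_i(2-2\cos 2\pi\theta_i)$, the min--max transfer of the principal-eigenvalue bound from Theorem~\ref{R net theorem from DS20}, and a Combes--Thomas argument where the Schur test gives $\sum_k|f(k)|(e^{\alpha|k|}-1)\lesssim_T\alpha$, with all constants depending on $T$. The differences are cosmetic: the paper writes $\kappa\Delta_{\rm free}+\lambda V\ge(\kappa\wedge1)(\Delta_{\rm free}+\lambda V)$ rather than rescaling the coupling to $\lambda/c_T$, which avoids invoking Theorem~\ref{R net theorem from DS20} at a possibly large coupling, and it uses the radial weight $\alpha|n-y|$ instead of a linear weight.
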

However, Theorem \ref{R net theorem for general hopping with unique minimum} also fails in the multiple minima case, because \textcolor{red}{\textbf{multiple minima also reduce the connectivity of the hopping acting on $\Z^d$}}. This can be seen from the following counterexample:

\begin{ctex}
Let
\[
Tu(n)=\sum_{j=1}^{d}\bigl(2u(n)-u(n+2\mathbf{e}_j)-u(n-2\mathbf{e}_j)\bigr),
\]
where $\mathbf{e}_j$, $1\leq j\leq d$, are the standard basis vectors of $\Z^d$. In this case, the Fourier symbol becomes
\[
\hatf(\theta)=\sum_{j=1}^{d}(2-2\cos(4\pi\theta_j)) \geq 0,
\]
which has $2^d$ distinct minima at $\{\theta:\theta_j=0 \text{ or } 1/2\}$. Indeed, the action of $T$ is disconnected on $\Z^d$: it has $2^d$ connected components, namely $n+(2\Z)^d\simeq \Z^d$ for each $n\in \{0,1\}^d$, and on each of them $T$ acts as the free Laplacian $\Delta_{\rm free}$ on $\Z^d$.

Now we take 
\[
V\equiv 1 \text{ on } \Z^d\setminus (2\Z)^d, \qquad V\equiv 0 \text{ on } (2\Z)^d.
\]
Then $\{V=1\}$ is a $1$-net in $\Z^d$. However, the operator $H=T+\lambda V$ acts as the free Laplacian on $(2\Z)^d$, and therefore for every $x,y\in \La_{N_0}\cap (2\Z)^d$, the Green's function has only the following decay rate:
\begin{equation*}
 |G_{N_0}(0)(x,y)|=\left|\Delta_{\mathrm{free},\lfloor N_0/2\rfloor}^{-1}(x,y) \right|  \lesssim_d \begin{cases}
	                          \log(N_0/|x-y|), & d=2; \\
							  |x-y|^{2-d}, & d\geq 3
 \end{cases}
\end{equation*}
by the standard results on the Dirichlet Laplacian. Thus, the exponential decay \eqref{principle Green's function} does not hold in this case, and Theorem \ref{R net theorem for general hopping with unique minimum} fails.

\end{ctex}
In light of the above discussion, it can be seen that, relatively speaking, our elliptic method and equation \eqref{LDT under mfS, without centering} are more intrinsic and robust in dealing with the multiple minima case.

\subsection{Free sites argument for initial scales in one dimension}
Now we return to the setting of Theorem \ref{Localization near the edge band}, namely, we consider the one-dimensional case $d=1$ and assume that $T\in \mathscr{R}$ satisfies \textbf{(A3)}. Then, by Remark \ref{remark on the assumption (A4)}(1), assumption \textbf{(A4)} is automatically fulfilled for $T$, and hence Theorem \ref{LDT for the initial scale, without free sites} (along with its proof) is applicable.

Recalling \eqref{max deg of P,Q}, we let $\Dbig(T)  =20 J^2 \cdot \Dhop(T) $. Set 
\begin{equation}\label{center of initial free sites}
	\wtS_{\rm in} := \Dbig(T)\Z \subset \Z, \qquad S_{\rm in}=\bigcup_{m\in \wtS_{\rm in}}[m-\Dhop(T),m+\Dhop(T)].
\end{equation}
We will show that the event $\Xi_{N_0}$ and the probability estimate in Theorem \ref{LDT for the initial scale, without free sites} can be made independent of the randomness on $S_{\rm in}$; hence we may take $S_{\rm in}$ as our initial set of free sites. Let $t\in [0,1]^{S_{\rm in}\cap \La_{N_0}}$ denote the variables on these sites, and let $\baromega \in \{0,1\}^{\La_{N_0}\setminus S_{\rm in}}$ denote the randomness outside $S_{\rm in}$. We will write $H_{N_0}(\baromega,t), G_{N_0}(\baromega,t)$, etc., for the corresponding operators with potential
\begin{equation}\label{potential under the initial free sites}
V(n)=
\begin{cases}
\baromega_n, & n\in \La_{N_0}\setminus S_{\rm in},\\
t_n, & n\in \La_{N_0}\cap S_{\rm in}.
\end{cases}
\end{equation}

\begin{thm}\label{LDT for the initial scale, free sites}
Assume $d=1$, $T\in \mathscr{R}$, and $T$ satisfies \textup{\textbf{(A3)}}. With the same choice of parameters and scales as in Theorem \ref{LDT for the initial scale, without free sites}, the following holds: for any $N_0\in  {\rm Scale}_{\rm in}$, and for $\baromega$ outside an event $\Xi'_{N_0}\subset \{0,1\}^{\La_{N_0}\setminus S_{\rm in}}$ that is independent of the randomness on $S_{\rm in}$ and has probability at most $e^{-(\log N_0)^{10}}$, we have 
\begin{equation}\label{L2 norm estimate on Green's function, initial scale, free site type}
	\|G_{N_0}(E;\baromega,t) \| \leq (\log N_0)^{7000 d_{T}}
\end{equation}
and 
\begin{equation}\label{off diagonal decay estimate on Green's function, initial scale, free site type}
	|G_{N_0}(E;\baromega,t)(n,n')| \leq  \exp \left\{  -\frac{|n-n'|}{(\log N_0)^{7000 d_T}} \right\}, \  \forall |n-n'|\geq \frac{N_0}{200}
\end{equation}
for every $E\in [0,\delta]$ and every $t\in  [0,1]^{S_{\rm in}\cap \La_{N_0}}$.
\end{thm}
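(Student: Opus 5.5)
We will rerun the proof of Theorem \ref{LDT for the initial scale, without free sites} with the potential \eqref{potential under the initial free sites}. The only place where randomness entered was Step 5. Steps 1--4 are purely deterministic and only use $0\le V\le 1$; this remains true for $t\in[0,1]^{S_{\rm in}\cap\La_{N_0}}$. Indeed \eqref{about reverse potential}, the positivity of $\lambda V^{N_0}$ and $P$, and the Neumann expansion in Step 6 all hold verbatim. Hence if \eqref{reverse assumption in initial scale} holds for some $E\in[0,\delta]$ and some $t$, we again reach \eqref{LDT event, without centering}, with $V(l)=t_l$ on $S_{\rm in}$.

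The modification is in the centering. We center only the Bernoulli sites, writing $V=\frac{1-\wtV}{2}$ on $\La_{N_0}\setminus S_{\rm in}$ and $V=t$ on $S_{\rm in}$. Then \eqref{LDT under mcN, centering} acquires, in each $\mcN(j,j',k')$, an extra deterministic-type term
\[
\frac{1}{2L'+1}\sum_{l'\in\La_{L'},\,l'+k'(2L'+1)\in S_{\rm in}} e^{-2\pi i l'(\theta_j-\theta_{j'})}\bigl(1-2t_{l'+k'(2L'+1)}\bigr).
\]
Its modulus is bounded by the density of $S_{\rm in}$ in any $L'$-interval. That density is at most $\frac{2\Dhop(T)+1}{\Dbig(T)}+\mcO(1/L')\le \frac{3}{20J^2}+o(1)$, by the choice $\Dbig(T)=20J^2\Dhop(T)$. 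This is exactly why $\Dbig$ carries the factor $J^2$.

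Concretely, we split $\mcN=\mcN_{\rm rand}+\mcN_{\rm free}$ with $|\mcN_{\rm free}|\le \frac{1}{5J^2}$, say. Then \eqref{final deduction on mcN, after choice of parameters} gives
\[
\sup|\mcN_{\rm rand}(j,j',k')|\ge \frac{1}{2J^2}-\frac{1}{5J^2}\ge\frac{1}{4J^2}.
\]
Here $\mcN_{\rm rand}$ is the sum over the non-free sites only, normalized by $(2L'+1)^{-1}$. We define
\[
\Xi'_{N_0}=\Bigl\{\baromega:\sup_{j,j',k'}|\mcN_{\rm rand}(j,j',k')|\ge \tfrac{1}{4J^2}\Bigr\}.
\]
This event depends only on $\baromega$, and it contains every $\baromega$ for which some $(E,t)$ violates the Neumann bound \eqref{the previous estimate to apply Neumann series expansion}.

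The probability estimate then follows Step 5 verbatim. Each $\mcN_{\rm rand}$ is a weighted sum of at most $2L'+1$ independent centered $\pm1$ variables with coefficients of modulus at most $(2L'+1)^{-1}$, so $\|\mcN_{\rm rand}\|_{\psi_2}\lesssim (2L')^{-1/2}$. The Dudley bound over $J^2\#\La_{K'}$ indices and Hoeffding then give $\P(\Xi'_{N_0})\le e^{-(\log N_0)^{10}}$, because $L'\sim(\log N_{\rm in})^{250}$; the constant $1/(4J^2)$ in place of $1/(2J^2)$ is harmless.

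For $\baromega\notin\Xi'_{N_0}$, Step 6 applies uniformly in $E\in[0,\delta]$ and $t$. It gives \eqref{L2 norm estimate on Green's function, initial scale, free site type} and \eqref{off diagonal decay estimate on Green's function, initial scale, free site type}, since the $s<A$ part only uses the hopping decay and $\|(\lambda V+1-E)^{-1}\|\le(1-\delta)^{-1}$.

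The main point to check is the first step: that the uncertainty-principle reduction (Steps 2--4) never used $V\in\{0,1\}$ beyond $0\le V\le1$. It uses $\|\wtV\|_\infty\le 1$ in \eqref{LDT under coefficients of b^j, centering}, and this still holds since $|1-2t|\le1$. We must also confirm that the density bound on $S_{\rm in}$ is small enough compared with $1/(2J^2)$ in \eqref{final deduction on mcN, after choice of parameters}.
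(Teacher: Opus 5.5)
Your proposal is correct and follows the paper's proof essentially step for step. The paper likewise reruns Steps 1--4 with $\wtV = 1-2t$ on $S_{\rm in}$. It then splits $\mathfrak{N}(j,j',k')$ into a free-site part, bounded via the density of $S_{\rm in}$ by $5\Dhop(T)/\Dbig(T)=1/(4J^2)$, and a non-free part $\widetilde{\mathfrak{N}}$. It defines $\Xi'_{N_0}$ by $\sup|\widetilde{\mathfrak{N}}|\ge 1/(4J^2)$ and takes the probability bound verbatim from Step 5. Your density bound $\frac{2\Dhop(T)+1}{\Dbig(T)}+\mcO(1/L')$ is simply a slightly sharper form of the paper's $5\Dhop(T)/\Dbig(T)$.
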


\begin{proof}[Proof of Theorem \ref{LDT for the initial scale, free sites}]
	From \textcolor{blue}{\textbf{(Step 6)}} in the proof of Theorem \ref{LDT for the initial scale, without free sites}, in order to ensure \eqref{L2 norm estimate on Green's function, initial scale, free site type} and \eqref{off diagonal decay estimate on Green's function, initial scale, free site type}, it suffices to prove \eqref{the previous estimate to apply Neumann series expansion} under the configuration \eqref{potential under the initial free sites} for all $E\in [0,\delta]$ and $t\in  [0,1]^{S_{\rm in}\cap \La_{N_0}}$. Assume to the contrary that there exist some $E$ and $t$ such that 
    \begin{equation}\label{reverse assumption, free sites}
        \left\| \left[  (1-T_{N_0})(\lambda V_{N_0}+1-E)^{-1} \right]^2 \right\| >1-\delta		
    \end{equation}
	for the potential \eqref{potential under the initial free sites}. Then the proof proceeds exactly as in Theorem \ref{LDT for the initial scale, without free sites}, except that we will finally deduce from \eqref{reverse assumption, free sites} that (compared with \eqref{final deduction on mcN, after choice of parameters})	
     \begin{equation}\label{final deduction on mfN, free sites}
  \sup_{1\leq j,j'\leq J \atop k'\in \Lambda_{K'}} |\mfN(j,j',k')|\geq \frac{1}{2J^2}
 \end{equation} 
 with 
\begin{equation}\label{mfN summation, free sites}
  \mfN(j,j',k')=\frac{1}{2L'+1}\sum_{l'\in \Lambda_{L'}} e^{-2\pi i  l' \cdot(\theta_j-\theta_{j'}) } \ \wtV(l'+k'(2L'+1)),
\end{equation}
where in \eqref{mfN summation, free sites} the centered potential $\wtV$ is given by 
\begin{equation}\label{centering potential under the initial free sites}
\wtV(n)=
\begin{cases}
1-2\baromega_n, & n\in \La_{N_0}\setminus S_{\rm in},\\
1-2t_n, & n\in \La_{N_0}\cap S_{\rm in}.
\end{cases}
\end{equation}
Therefore, 
\[
\P(\wtV(n)=\pm 1) =\frac{1}{2} \quad \text{i.i.d. for } n\in  \La_{N_0}\setminus S_{\rm in}
\]
and $\| \wtV\|_{\ell^{\infty}(\La_{N_0}\cap S_{\rm in})} \leq 1$. Now for all $j,j'$ and $k'$, 
\begin{align*}
	&| \mfN(j,j',k') | = \left| \frac{1}{2L'+1} \left(\sum_{l'\in \Lambda_{L'}(k'(2L'+1)) \cap S_{\rm in}} +\sum_{l'\in \Lambda_{L'}(k'(2L'+1))\setminus S_{\rm in}} \right) e^{-2\pi i  l' \cdot(\theta_j-\theta_{j'}) } \ \wtV(l') \right| \\
	     \leq &\left| \frac{1}{2L'+1}  \sum_{l'\in \Lambda_{L'} (k'(2L'+1))\setminus S_{\rm in}}  e^{-2\pi i  l' \cdot(\theta_j-\theta_{j'}) } \ \wtV(l) \right|  +  \| \wtV\|_{\ell^{\infty}(\La_{N_0}\cap S_{\rm in})} \cdot \frac{\# (\La_{N_0}(k'(2L'+1))\cap S_{\rm in})}{\# \La_{N_0}(k'(2L'+1))}\\
		 	     \leq &\left| \frac{1}{2L'+1}  \sum_{l'\in \Lambda_{L'} (k'(2L'+1))\setminus S_{\rm in}}  e^{-2\pi i  l' \cdot(\theta_j-\theta_{j'}) } \ \wtV(l) \right|  +   5  \ \frac{\Dhop(T)}{\Dbig(T)} .
\end{align*}
Therefore, if we define 
\begin{equation}\label{wtmfN summation, free sites}
  \widetilde{\mfN} (j,j',k')=\frac{1}{2L'+1}  \sum_{l'\in \Lambda_{L'} (k'(2L'+1))\setminus S_{\rm in}}  e^{-2\pi i  l' \cdot(\theta_j-\theta_{j'}) } \ \wtV(l),
\end{equation}
then \eqref{final deduction on mfN, free sites} yields 
   \begin{equation}\label{final deduction on wtmfN, free sites}
  \sup_{1\leq j,j'\leq J \atop k'\in \Lambda_{K'}} |\widetilde{\mfN}(j,j',k')|\geq \frac{1}{2J^2} -  5\ \frac{\Dhop(T)}{\Dbig(T)}  \geq \frac{1}{4J^2}
\end{equation} 
since we choose $\Dbig (T)= 20J^2\cdot  \Dhop(T)$. Thus we take the event 
\begin{equation}\label{final event Omega'_N0, free sites}
  \Xi'_{N_0}=\left\{ \omega :\  \sup_{1\leq j,j'\leq J \atop k'\in \Lambda_{K'}}|\widetilde{\mfN} (j,j',k')|\geq \frac{1}{4J^2}   \right\} 
\end{equation} 
which is independent of the randomness on $S_{\rm in}$. By the same probability estimate as in \textcolor{blue}{\textbf{(Step 5)}} of the proof of Theorem \ref{LDT for the initial scale, without free sites}, we have 
\begin{equation}\label{Orlicz norm of sup of wtmfN}
	\left\|  \sup_{1\leq j,j'\leq J \atop k'\in \Lambda_K'}|\widetilde{\mfN}(j,j',k')|  \right\|_{\psi_2} \lesssim_{T} \sqrt{\log K'} (2L'+1)^{-1/2},
\end{equation}
and 
\begin{equation}\label{probability estimate on Omega'_N0}
	\P(\Xi'_{N_0}) \leq 2\exp\left\{ -\frac{1/(4J^{2})}{\mcO_{+,T}\left(   \sqrt{\log K'} (2L'+1)^{-1/2}  \right)}\right\}\leq e^ { - (\log N_0)^{10} }.
\end{equation}
This completes the proof.
\end{proof}

\section{The large scale: multi-scale analysis}\label{large scale section}
This section aims to establish the LDT for the Green's function at large scales via multi-scale analysis (MSA). Since our initial set of free sites $S_{\rm in}$ in \eqref{center of initial free sites} is more similar to the one in \cite{BK05} (of the form $Q\Z, Q\gg 1$) than to the one in \cite{DS20} (of the form $\Z\setminus Q\Z, Q\gg 1$), our iteration will proceed in a manner closer to that of \cite{BK05}.

Since $\lambda$ and $T$ are fixed, in the following we will omit the dependence of constants on $\lambda$ and $T$ for simplicity. 

Let $\varepsilon,\rho$ be numerical constants such that $0<12\rho\leq \varepsilon\le 10^{-10}\ll 1$, and let $M\in \mathbb Z_+$ be such that $\kappa =(1-\varepsilon)^M\le \varepsilon/10$. We set the constant
\begin{equation}\label{C_in}
C_{\rm in}=\frac{3}{2}(1-\varepsilon)^{-M}	
\end{equation}

to be used in Theorem \ref{LDT for the initial scale, without free sites} and Theorem \ref{LDT for the initial scale, free sites}. We prove the following results:

\begin{thm}\label{LDT for the large scales}
Assume that $T\in\mathscr{R}$ satisfies \textbf{\textup{(A3)}}. Let $\delta$ and $N_{\rm in}$ be as in Theorem \ref{LDT for the initial scale, without free sites}, and set $\gamma_0 = (\log N_{\rm in})^{-8000 d_T}$. Moreover, let $E\in [2^{-6000d_T}\delta,\delta]$ be an arbitrary \textbf{fixed} energy. For any scale $N\in {\rm Scale}_{\rm in}\cup (N_{\rm in}^{C_{\rm in}},\infty)$, there exists an event $ \Omega_N \subset \{0,1\}^{\Lambda}$, where $\Lambda = \Lambda_N$, such that:

\begin{enumerate}
    \item
    \begin{equation}\label{probability estimate for large scales}
        1-\P\left(\Omega_N \right) < N^{-1/3}.
    \end{equation}

    \item
    $\Omega_N$ is obtained as a disjoint union of ``cylinders'' of the form
    \begin{equation}\label{form of cylinder}
		        \mcC = \{(\omega_j)_{j\in \Lambda\setminus S}\} \times \{0,1\}^{S} \subset \{0,1\}^{\Lambda},
	\end{equation}
    where the index set $S$ (which we call the set of free sites) and the element $\baromega=(\omega_j)_{j\in \Lambda\setminus S}$ depend on $\mcC$. Moreover, $S$ is a union of disjoint intervals of length $2D_{\hop}(T)+1$, and the set $\wtS$ of the centers of those intervals satisfies the density assumption
    \begin{equation}\label{ample of free sites}
		   \# (\Lambda' \cap \wtS) \geq (N')^{1-\rho} \quad \text{for any } N'\text{-interval } \Lambda' \subset \Lambda, \ \forall N'\ge N^{1-\varepsilon/4}.
	\end{equation}

    \item
    For a cylinder $\mcC$ as above, the Green's function $G_\Lambda(E)=G_\Lambda(E;\bar{\omega}, t_j\,(j\in S))$, with arbitrary $t_j\in[0,1]$, satisfies
    \begin{align}
      \label{L2 estimate for large scales}  &\|G_\Lambda(E)\| < e^{N^{1-\varepsilon/4}}, \\
      \label{Off diagonal estimate for large scales}  &\|G_\Lambda(E)(x,x')\| < e^{-\gamma_N |x-x'|} \quad \text{for } x,x'\in\Lambda,\; |x-x'|\geq \frac{N}{200},
    \end{align}
    for some rate $\gamma_N\ge \gamma_0/2$. Moreover, the above estimate is stable under energy perturbations: for any $\wtE$ such that $|\wtE-E|\le e^{-N^{1-\varepsilon/2}}$, we still obtain 
	\begin{align}
      \label{L2 estimate for large scales, perturbation energy}  &\|G_\Lambda(\wtE)\| < e^{N^{1-\varepsilon/4}}, \\
      \label{Off diagonal estimate for large scales, perturbation energy}  &\|G_\Lambda(\wtE)(x,x')\| < e^{-\gamma_N |x-x'|} \quad \text{for } x,x'\in\Lambda,\; |x-x'|\geq \frac{N}{200}.
    \end{align}
\end{enumerate}
\end{thm}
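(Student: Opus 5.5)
The proof will be a multi-scale induction in the style of \cite{BK05}. The scales are $N_{k+1}\sim N_k^{1/(1-\varepsilon)}$ up to the $M$ generations that $C_{\rm in}$ allows. The base of the induction is the whole range ${\rm Scale}_{\rm in}$, which Theorem \ref{LDT for the initial scale, free sites} handles directly. For $N_0\in{\rm Scale}_{\rm in}$ we take cylinders with free set $S=S_{\rm in}\cap\Lambda_{N_0}$ and $\baromega\notin\Xi'_{N_0}$. The density \eqref{ample of free sites} holds trivially, since $\wtS_{\rm in}=\Dbig(T)\Z$ has density $1/\Dbig(T)\gg (N')^{-\rho}$. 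The probability bound $e^{-(\log N_0)^{10}}<N_0^{-1/3}$ is immediate. The bounds \eqref{L2 estimate for large scales}--\eqref{Off diagonal estimate for large scales} follow from $(\log N_0)^{7000d_T}\le e^{N^{1-\varepsilon/4}}$ and $(\log N_0)^{-7000d_T}\ge\gamma_0$. Stability under $|\wtE-E|\le e^{-N^{1-\varepsilon/2}}$ is a resolvent-identity perturbation: note $\wtE$ may leave $[0,\delta]$ only negligibly, or one simply applies the first resolvent identity with $\|G\|\,|\wtE-E|\ll1$.

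For the inductive step from scales $N'\in[N^{1-\varepsilon},N^{1-\varepsilon/2}]$ to $N$, I will cover $\Lambda_N$ by $N'$-intervals and call $\omega$ good on an interval if it lies in $\Omega_{N'}$ there.

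\textbf{Step 1 (bad intervals are sparse).} Using independence on disjoint intervals together with $1-\P(\Omega_{N'})<N'^{-1/3}$, outside probability $\ll N^{-1/3}$ at most $\mcO(1)$ pairwise disjoint bad $N'$-intervals occur in $\Lambda_N$. This uses $1/3$ exponents with $N'\ge N^{1-\varepsilon}$, possibly after first passing through several intermediate scales. On the good intervals we keep the free sites of the corresponding cylinders. The union $S$ of these free sites still satisfies \eqref{ample of free sites} at scale $N$ with exponent $\rho$, because the bad region has size $\mcO(N')\ll N^{1-\varepsilon/4}$.

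\textbf{Step 2 (Wegner estimate via free sites).} This is the main obstacle. I have to show that, for the fixed $E$, $\mathrm{dist}(E,\spec H_{\Lambda_N})>e^{-N^{1-\varepsilon/3}}$ outside a small event depending on the free variables in the bad neighbourhood. I would follow the eigenvalue-variation argument of \cite{DS20} combined with the Sperner/free-site lemma of \cite{Bou04,BK05}. Let $u$ be a normalized eigenfunction with eigenvalue near $E$. The resolvent bounds on good intervals force $u$ to localize near the bad set $\mcB$ up to $e^{-\gamma N'}$ errors. Theorem \ref{QUC for rational class, refined to free sites} is then applied on an interval $\Lambda'\supset\mcB$ of size $\sim N^{1-\varepsilon/4}$. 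I need the potential lower bounds \eqref{lower bound on potential, R0}/\eqref{lower bound on potential, R-} there; I would secure these by rewriting the eigen-equation as $(T+\lambda V-E)u=0$ and, if necessary, choosing the convention $V\mapsto\lambda V-E$ so that $\lambda\ge b$ on one of the two Bernoulli values. Alternatively, in the bad cases I would absorb a shift so that $|\lambda t-E+p_R/q_R|$ stays bounded below for $t$ near $0$ or $1$. The theorem yields many free sites $x$ with $|u(x)|\ge C^{-N^{1-\varepsilon/4}}$. By first-order perturbation, $\partial E/\partial t_x=\lambda|u(x)|^2\ge e^{-2N^{1-\varepsilon/4}\log C}$, so eigenvalues move monotonically with speed bounded below in $\gtrsim (N^{1-\varepsilon/4})^{1-\rho}$ directions. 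Sperner's lemma (Bourgain's version for the Boolean cube $\{0,1\}^{S}$) then gives that the proportion of $t\in\{0,1\}^S$ with an eigenvalue in $[E-\eta,E+\eta]$ is at most $N^{\mcO(1)}\,(\#\wtS)^{-1/2}\le N^{-1/3}/2$. Here $\eta=e^{-N^{1-\varepsilon/3}}$, and $\#\wtS\ge N^{(1-\varepsilon/4)(1-\rho)}$ since $12\rho\le\varepsilon$. We then fix only those free sites needed for the cylinder structure, keeping a sub-collection of free sites away from $\mcB$ as the new $S$ so that the cylinders remain of the form \eqref{form of cylinder} and the bounds hold for all $t$ on $S$.

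\textbf{Step 3 (resolvent expansion).} The standard geometric resolvent identity combines the good $N'$-interval bounds, the Wegner bound on the $\mcO(1)$ bad blocks, and the exponential decay \eqref{exp decay hopping} of the long-range hopping. This last point requires summing the tails $e^{-c_T|n-m|}$ across block boundaries, which costs only $e^{-c_TN'/2}$. The result is \eqref{L2 estimate for large scales} and off-diagonal decay with rate $\gamma_N\ge\gamma_{N'}-N^{-\varepsilon/10}$, so $\prod(1-\cdot)$ keeps $\gamma_N\ge\gamma_0/2$. Energy stability again follows from the resolvent identity, since $e^{-N^{1-\varepsilon/2}}e^{N^{1-\varepsilon/4}}$ is not small. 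I therefore expect to need the Wegner margin $e^{-N^{1-\varepsilon/3}}$ to be replaced by one exceeding $e^{-N^{1-\varepsilon/2}}$, which I would arrange by applying Step 2 with the interval $[E-2e^{-N^{1-\varepsilon/2}},E+2e^{-N^{1-\varepsilon/2}}]$ directly.
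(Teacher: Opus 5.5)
Your overall architecture is the paper's: initial scales from Theorem \ref{LDT for the initial scale, free sites}, sparse bad blocks, a Wegner estimate from QUC plus Sperner on the free sites, then the coupling lemma. But Step 2 fails as set up, because the scales do not match. You apply Theorem \ref{QUC for rational class, refined to free sites} on an interval of length $\sim N^{1-\varepsilon/4}$. That only guarantees $|u(x)|^2$ of order $(b^{-1}C)^{-2N^{1-\varepsilon/4}}$ on free sites, which is far below your window $\eta=e^{-N^{1-\varepsilon/3}}$, and further below the larger window $e^{-N^{1-\varepsilon/2}}$ you propose in Step 3. Flipping such a site moves the eigenvalue by much less than the window, so configurations need not leave the event, and you get no Sperner (antichain) structure. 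Moreover, a derivative bound $\partial_{t_x}\lambda=\lambda|u(x)|^2$ at a single $t$ does not control the motion over $t_x\in[0,1]$, for three reasons:
\begin{itemize}
\item the good sites depend on $t$;
\item nearby eigenvalues can absorb the motion (interlacing);
\item for $\mathscr{R}_0,\mathscr{R}_-$ the lower-bound hypothesis of QUC can fail at intermediate $t_x$, e.g.\ $\lambda t_x=E$.
\end{itemize}
Your ways of securing \eqref{lower bound on potential, R0}, \eqref{lower bound on potential, R-} do not give an infimum over all sites. What is actually used is $E\ge 2^{-6000d_T}\delta$, which yields $b\sim\delta$ when $V$ takes Bernoulli values.

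The paper avoids all of this as follows. It proves the Wegner estimate not on $\Lambda_N$ but on blocks $Q_s$ of size $D_0=\whL\sim N^{1-\varepsilon/3}$, with windows $s_\ell\le e^{-D_1/2}$. It applies QUC only at Bernoulli configurations, on sub-blocks of size $\whD\sim\sqrt{D_2D_3}$ containing the bad $L_1$-blocks. This gives $|\psi_i(x)|^2\ge e^{-2D_2}$, which dominates the window, on $\gtrsim \whD^{1-\rho}\gg D_0^{1/2}\gtrsim\sqrt{\#S}$ free sites. The flip is then handled by the rank-one Lemma \ref{rank-one perturbation}; its hypothesis (5) comes from Claim \ref{decay of eigenvector}, since eigenvectors near $E$ are $\le e^{-2\gamma_0 D_3}$ on the free sites.

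Second, the prefactor $N^{\mcO(1)}$ in your Sperner bound must in fact be $N^{o(1)}$, since it has to beat $N^{1/6}$. It counts the eigenvalues near $E$, cubed in the multi-window scheme of Claim \ref{Wegner set inclusion}. With a one-level cover, eigenvectors are only known to concentrate on a bad region of size $\gtrsim N'\ge N^{1-\varepsilon}$, so up to $\sim N^{1-\varepsilon}$ eigenvalues may lie near $E$ and the estimate collapses. The missing ingredient is the $M$-generation hierarchy of hereditary bad subintervals. Outside probability $L_1^{-10}$, each bad $L_1$-block has at most $K_2$ hereditary bad $L_M$-subblocks. Hence eigenvectors with eigenvalue within $e^{-D_5}$ of $E$ carry half their mass on $\mcG$ with $\#\mcG\lesssim L_M=N^{\kappa}$, $\kappa\le\varepsilon/10$. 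This bounds $\#\mcK$ as in \eqref{cardinality of mcK}, and the bootstrap \eqref{bootstrap eigenvalue} makes $\mcK$ independent of $t$.

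Finally, two bookkeeping points.
\begin{itemize}
\item Letting the Wegner event depend on all free sites in a region of length $\gtrsim N^{1-\varepsilon/4}$ breaks \eqref{ample of free sites} for $N'$ near $N^{1-\varepsilon/4}$. The paper only discards free sites inside the $Q_s$.
\item Energy stability comes not from a wider Wegner window on $\Lambda_N$. It comes from $e^{-N^{1-\varepsilon/2}}\ll e^{-\whL^{1-\varepsilon/3}}$, together with the inductive stability of the good $L_1$-blocks, which lets one rerun the coupling lemma at $\wtE$.
\end{itemize}
In the base case, the first resolvent identity alone cannot preserve \eqref{Off diagonal estimate for large scales, perturbation energy}, since $e^{-N^{1-\varepsilon/2}}\gg e^{-\gamma_0 N}$. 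Your other option, uniformity of Theorem \ref{LDT for the initial scale, free sites} on $E\in[0,2\delta]$, is the one that works.
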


\begin{rmk}\label{rmk on LDT for large scales}
(1) It might cause confusion that in \eqref{L2 estimate for large scales, perturbation energy} and \eqref{Off diagonal estimate for large scales, perturbation energy}, the perturbation range is $e^{-N^{1-\varepsilon/2}}$, which is larger than the order $e^{-N^{1-\varepsilon/4}}$ appearing in the upper bound \eqref{L2 estimate for large scales}. This seems counterintuitive, since the Neumann series argument
\begin{equation}\label{perturbation energy Neumann}
			G_{\La}(\wtE)= \sum_{s\geq 1}G_{\Lambda} (E) [(\wtE-E)G_{\La}(E)]^s 
\end{equation}
can only preserve the form of \eqref{L2 estimate for large scales, perturbation energy} from \eqref{L2 estimate for large scales} under perturbations of size at most $(1-)e^{-N^{1-\varepsilon/4}}$. This size is what \cite[Lemma 6.4]{DS20} admits. 
(In fact, if one wants to preserve \eqref{Off diagonal estimate for large scales, perturbation energy} from \eqref{Off diagonal estimate for large scales} via \eqref{perturbation energy Neumann}, the perturbation must be at most $e^{-N}$ in scale, due to the distance lower bound $N/200$.) 
The reason why the perturbation can be as large as $e^{-N^{1-\varepsilon/2}}$ is that our proof is inductive. Although $e^{-N^{1-\varepsilon/2}}$ is too large for the scale $N$ itself, it is sufficiently small relative to the previous (smaller) scales, and hence preserves the Green's function estimates at those scales, thereby maintaining the ``goodness'' of the intervals at previous scales. This in turn preserves the Green's function estimate at scale $N$.

This phenomenon can also be understood from two aspects in the following proofs. First, for the initial scales, the Green's function estimate is indeed established uniformly for all $E\in[0,\delta]$ in Theorem \ref{LDT for the initial scale, free sites}. Second, for the large scales, thanks to the hierarchical resonant structure, the Green's function estimate depends essentially on the resonances in some smaller intervals $Q_s$; a perturbation of size $e^{-N^{1-\varepsilon/2}}$ remains sufficiently small to preserve the Wegner estimate on each $Q_s$ (see \eqref{Qs inequality, perturbation wegner}).\\

(2) Although Theorem \ref{LDT for the initial scale, without free sites} and Theorem \ref{LDT for the initial scale, free sites} hold for all $E\in [0,\delta]$, in Theorem \ref{LDT for the large scales} we only consider energies $E\in [2^{-6000d_T}\delta,\delta]$. The additional lower bound away from zero is mainly imposed to match the lower bound conditions \eqref{lower bound on potential, R0} and \eqref{lower bound on potential, R-} in the QUC, as will be seen later in the proof (see \eqref{low bound quanti in R0} and \eqref{lower bound on potential, R-}).

	Indeed, to prove localization in the full region $[0,\delta]$, Theorem \ref{LDT for the large scales} is sufficient, since we can use the following trick: decompose 
	\[
	(0,\delta] = \bigcup_{j=0}^{\infty} [2^{-(j+1)\times 6000d_T}\delta,\, 2^{-j\times 6000d_T}\delta].
	\]
	Suppose Theorem \ref{LDT for the large scales} holds. For example, consider the interval $[2^{-2\times 6000d_T}\delta,2^{-6000d_T}\delta]$. We set 
	\[
	\delta' = 2^{-6000d_T}\delta,\qquad N'_{\rm in}=N_{\rm in}^2,
	\]
	then 
	\[
	\delta' = 2^{-6000d_T}(\log N_{\rm in})^{-6000d_T} = (\log N'_{\rm in})^{-6000d_T}.
	\]
	Therefore, Theorem \ref{LDT for the large scales} holds for energies in $[2^{-2\times 6000d_T}\delta,2^{-6000d_T}\delta]$ with $\delta$ replaced by $\delta'$ and $N_{\rm in}$ replaced by $N'_{\rm in}$. The same argument applies to the other intervals, and thus we essentially obtain the Green's function estimate in Theorem \ref{LDT for the large scales} for all energies $E\in (0,\delta]$ (with the caveat that the closer $E$ is to zero, the larger the initial scale $N_{\rm in}$ must be chosen).

	Concerning localization, Theorem \ref{LDT for the large scales} will imply Anderson localization in the spectral region $[2^{-6000d_T}\delta,\delta]$. Similarly, by rechoosing the parameters and initial scales as discussed above, we also obtain Anderson localization in each interval $[2^{-(j+1)\times 6000d_T}\delta,2^{-j\times 6000d_T}\delta]$, and hence establish localization on the whole interval $[0,\delta]$.\\

(3) Actually, the MSA argument in the proof of Theorem \ref{LDT for the large scales} is independent of dimension and can be generalized to arbitrary dimensions $\Z^d$. The only dependence on the dimension in our proof comes from the fact that the quantitative unique continuation is established in one dimension.

\end{rmk}

\subsection{Proof of LDT for large scales}
\begin{proof}[Proof of Theorem \ref{LDT for the large scales}] $\quad$\\

\noindent \textcolor{black}{\large \textbf{Initial step}} \\
For scales $N=N_0\in {\rm Scale}_{\rm in}$, Theorem \ref{LDT for the large scales} is a direct consequence of Theorem \ref{LDT for the initial scale, free sites}. For such scales we set $\Omega_{N_0} = (\Xi'_{N_0})^c$, and hence
\[
1-\P(\Omega_{N_0}) = \P(\Xi'_{N_0}) \le e^{-(\log N_0)^{10}} \ll N_0^{-1/3}.
\]
Moreover, since $\Omega_{N_0}$ is independent of the randomness on $S_{\rm in}\cap \Lambda_{N_0}$, we may write it as
\[
\Omega_{N_0} = \bigcup_{\substack{
   \bar{\omega} \in \operatorname{proj}_{\Lambda_{N_0}\setminus S_{\rm in}}(\Omega_{N_0})
}} 
 \{(\omega_j)_{j\in \Lambda_{N_0}\setminus S_{\rm in}}\} \times \{0,1\}^{S_{\rm in}\cap \Lambda_{N_0}}.
\]
For each cylinder, we take $S=S_{\rm in}\cap \La_{N_0}$, with $S_{\rm in}$ defined in \eqref{center of initial free sites}. (If some interval of $S_{\rm in}$ intersects the boundary of $\La_{N_0}$, we remove it from $S$; this does not affect the proof. Similar boundary issues also arise when constructing $S_{\rm remaining}$ and $\wtS_{\rm remaining}$ in \eqref{S remaining} and \eqref{wtS remaining}.) The set of centers of $S$ is $\wtS=\Lambda_{N_0}\cap \wtS_{\rm in}$, and therefore
\[
\#(\Lambda' \cap \wtS) = \#(\Lambda' \cap \wtS_{\rm in}) \ge \frac{2N'+1}{2\Dbig(T)} \gtrsim N'\geq (N')^{1-\rho}
\]
for any $N'$-interval $\Lambda' \subset \Lambda_{N_0}$ with $N'\ge N_0^{1-\varepsilon/4}$. Finally, taking
\[
\gamma_{N_0}\equiv \gamma_0 = (\log N_{\rm in})^{-8000d_T} \ll (\log N_0)^{-7000d_T},
\]
the estimates \eqref{L2 estimate for large scales} and \eqref{Off diagonal estimate for large scales} follow from \eqref{L2 norm estimate on Green's function, initial scale, free site type} and \eqref{off diagonal decay estimate on Green's function, initial scale, free site type}, respectively.
Moreover, since in Theorem \ref{LDT for the initial scale, free sites} the estimates \eqref{L2 norm estimate on Green's function, initial scale, free site type} and \eqref{off diagonal decay estimate on Green's function, initial scale, free site type} hold uniformly for all $E\in [0,\delta]$ (and indeed can be extended to the region $[0,2\delta]$, as is readily seen from the proof of Theorem \ref{LDT for the initial scale, without free sites}), the estimates \eqref{L2 estimate for large scales, perturbation energy} and \eqref{Off diagonal estimate for large scales, perturbation energy} also hold, since $\wtE \leq E+|E-\wtE|\leq \delta+e^{-N^{1-\varepsilon/2}}\leq 2\delta$.\\

\noindent \textcolor{black}{\large \textbf{Inductive step}} \\
For scales $N>N_{\rm in}^{C_{\rm in}}$, suppose that Theorem \ref{LDT for the large scales} has been established for all previous scales in ${\rm Scale}_{\rm in} \cup (N_{\rm in}^{C_{\rm in}}, N-1]$. 

Since our choice of $C_{\rm in}$ in \eqref{C_in} ensures that 
\begin{equation}\label{L_M larger than N_in}
N^{(1-\varepsilon)^{M}} > N_{\rm in}^{C_{\rm in} (1-\varepsilon)^M} \ge N_{\rm in}^{\frac{3}{2}} \gg N_{\rm in},
\end{equation}
we may define $L_0=N$ and $L_k$ for $1\le k\le M$ by
\begin{equation}\label{various iteration scales}
       L_k=\begin{cases}
		       \lfloor L_0^{(1-\varepsilon)^k} \rfloor, & \text{if } L_0^{(1-\varepsilon)^k} > N_{\rm in}^{C_{\rm in}},\\
			   \text{the scale in ${\rm Scale}_{\rm in}$ nearest to } L_0^{(1-\varepsilon)^k}, & \text{otherwise.}
	   \end{cases}
\end{equation}
Therefore, $L_k\in {\rm Scale}_{\rm in} \cup (N_{\rm in}^{C_{\rm in}}, N-1]$. From the definition \eqref{initial scale set Scale_in}, it is easy to see that any two consecutive elements of ${\rm Scale}_{\rm in}$ differ by $(2L'+1)(2L+1)$ (which is $\sim (\log N_{\rm in})^{750}$ by our choice of scales in Theorem \ref{LDT for the initial scale, without free sites}). Therefore, regardless of which case occurs in \eqref{various iteration scales}, we have 
\[
L_k - L_0^{(1-\varepsilon)^k} = \mcO\left((\log N_{\rm in})^{750}\right),
\]
and hence 
\begin{equation}\label{L_k approximate}
	L_k = \left(1 - \mcO\left(N_{\rm in}^{-3/2}(\log N_{\rm in})^{750}\right)\right) L_0^{(1-\varepsilon)^k}.
\end{equation}
Since $N_{\rm in}\gg 1$, \eqref{L_k approximate} shows that $L_k$ is equal to $L_0^{(1-\varepsilon)^k}$ up to a factor very close to $1$. It will be clear that our subsequent argument is robust with respect to such a small perturbation of the factor; thus, for simplicity of the proof, we will simply treat 
\[
L_k = L_0^{(1-\varepsilon)^k}, \quad 1\le k\le M.
\]

First, since $L_1 = L_0^{1-\varepsilon} \ll L_0 = N$, we can cover $\La = \Lambda_N$ by a family $\mcF^{(1)}$ of $L_1$-intervals (contained in $\Lambda$) with the following structure:
\begin{itemize}
	\item $\Lambda = \bigcup_{\La_{L_1}(r) \in \mcF^{(1)}} \La_{L_1}(r)$, and $\#\mcF^{(1)} \lesssim L_0/L_1 = L_0^{\varepsilon}$;
	\item For any distinct $\La_{L_1}(r), \La_{L_1}(r') \in \mcF^{(1)}$, we have $\Lambda_{L_1/10}(r')\cap \Lambda_{L_1}(r)=\varnothing$; 
	\item For every $x\in \Lambda$, there exists $\La_{L_1}(r) \in \mcF^{(1)}$ such that $\Lambda_{L_1/10}(x)\cap \La \subset \Lambda_{L_1}(r)$. 
\end{itemize}
Then, for each $\La_{L_1}(r)\in \mcF^{(1)}$, since Theorem \ref{LDT for the large scales} holds at scale $L_1$ and is shift-invariant with respect to the randomness, there exists an event $\Omega_{L_1}(r)\subset \{0,1\}^{\La_{N_1}(r)}$ with probability greater than $1-L_1^{-1/3}$, which can be decomposed into disjoint cylinders 
\begin{equation}\label{cylinder decomp of L_1 r_1}
	\Omega_{L_1}(r)= \bigcup_{\alpha \in \mathfrak{A}_r } \mcC_{r,\alpha},
\end{equation}
where each cylinder $\mcC_{r,\alpha}$ is of the form \eqref{form of cylinder}. For the randomness in each $\mcC_{r,\alpha}$, the estimates \eqref{L2 estimate for large scales} and \eqref{Off diagonal estimate for large scales} hold. If the randomness lies in $\Omega_{L_1}(r)$, we call the interval $\La_{L_1}(r)$ ``good''; otherwise, we call it ``bad''. The same terminology and notations apply to other scales.

Let $K_1$ be a large positive integer to be chosen later, and consider the probability
\begin{align}\label{event at least K_1 bad L_1 cube}
\P\left( \text{there are at least } K_1 \text{ bad } L_1\text{-intervals in } \mcF^{(1)} \right).
\end{align}
Since our construction ensures that any two distinct $L$-intervals in $\mcF^{(1)}$ are either adjacent or disjoint (see \cite{LSZ25}, Section 3.3, Figure 1 for an illustration), the probability in \eqref{event at least K_1 bad L_1 cube} can be bounded by
\begin{align}\label{prob on event at least K_1 bad L_1 cube}
	\eqref{event at least K_1 bad L_1 cube}
	& \leq \left(\#\mcF^{(1)} \right)^ {K_1} \cdot L_1^{-\frac{1}{3}\cdot \frac{1}{2}K_1}
	\leq L_1^{\left( \varepsilon+ \mcO((\log L_1)^{-1}) -\frac{1}{6}\right)K_1}
	\leq L_1^{-K_1/7}.
\end{align}
That is, we can ensure that there are at most $K_1$ bad $L_1$-intervals in $\mcF^{(1)}$, except on an event of probability less than $L_1^{-K_1/7}$.

Next, similarly to the construction of $\mcF^{(1)}$, we continue to pave each $L_1$-interval with smaller $L_2$-intervals. Specifically, for each $\La_{L_1}(r)\in \mcF^{(1)}$, we cover it by a family $\mcF^{(2)}_r$ of $L_2$-intervals (contained in $\Lambda_{L_1}(r)$) with the following structure:
\begin{itemize}
	\item $\Lambda_{L_1}(r) = \bigcup_{\La_{L_2}(y) \in \mcF^{(2)}_r} \La_{L_2}(y)$, and $\#\mcF^{(2)}_r \lesssim L_1/L_2 = L_0^{\varepsilon}$;
	\item For any distinct $\La_{L_2}(y), \La_{L_2}(y') \in \mcF^{(2)}_r$, we have $\Lambda_{L_2/10}(y')\cap \Lambda_{L_2}(y)=\varnothing$; 
	\item For every $x\in \Lambda_{L_1}(r)$, there exists $\La_{L_2}(y) \in \mcF^{(2)}_r$ such that $\Lambda_{L_2/10}(x)\cap \Lambda_{L_1}(r) \subset \Lambda_{L_2}(y)$. 
\end{itemize}
Indeed, the union $\mcF^{(2)}= \bigcup_{\Lambda_{L_1}(r) \in\mcF^{(1)}} \mcF^{(2)}_r$ gives a covering of the whole $\La$ by $L_2$-intervals satisfying the above properties. We repeat this construction for each $\La_{L_2}(r)\in \mcF^{(2)}$, now using $L_3$-intervals, and so on, until we obtain a covering by $L_M$-intervals. Thus we obtain 
\[
\mcF^{(3)}, \mcF^{(4)} ,\cdots,\mcF^{(M)}.
\]
These coverings give the \textbf{hierarchical structure} in our multi-scale proof.\\

Now for every $\La_{L_1}(r_1)$ in $\mcF^{(1)}$, we will call an $L_M$-interval $\La_{L_M}(r_M)\in \mcF^{(M)}$ a \textbf{``hereditary bad subinterval''} of $\La_{L_1}(r_1)$ (borrowing the terminology from \cite{DS20}) if there exists a nested sequence
\[
 \La_{L_1}(r_1) \supset \La_{L_2}(r_2) \supset \cdots \supset \La_{L_M}(r_M)
\]
such that for each $2\le k\le M$, the interval $\La_{L_k}(r_k)$ is bad. Let \textcolor{red}{$K_2=(K')^{M-1}$}, where $K'$ is a large positive integer to be chosen later. Denote the event
\begin{equation}\label{reverse event of hierarchical resonant structure}
A_{r_1}= \left\{
  \begin{aligned}
    &\exists \ 2\le s\le M \text{ and } \La_{L_1}(r_1)\supset \La_{L_{s-1}}(x_{s-1})\in \mcF^{(s-1)} \\
    &\text{such that it has more than } K' \text{ bad } L_s\text{-subintervals in } \mcF^{(s)}_{x_{s-1}}
  \end{aligned}
\right\}.
\end{equation}
By the pigeonhole principle, we have 
\begin{align}\label{less than K2 hereditary bad block}
\notag	\P & \left(\La_{L_1}(r_1) \text{ has more than } K_2 \text{ hereditary bad subintervals}\right) \leq \P(A_{r_1})\\
\notag	\leq & \sum_{2\leq s\leq M} \#\mcF^{(s-1)}\cdot (\# \mcF^{(s)}_{x_{s-1}})^{K'} \cdot L_{s}^{-\frac{1}{3}\cdot \frac{1}{2}K'} \\
\notag	\leq & \sum_{2\leq s\leq M}  \left(C\frac{L_1}{L_{s-1}}\right)\cdot \left(C\frac{L_{s-1}}{L_s}\right)^{K'} \cdot L_{s}^{-\frac{1}{3}\cdot \frac{1}{2}K'} \qquad (\text{$C$ is some numerical constant}) \\
\notag	\leq & L_1 \sum_{2\leq s\leq M}L_{s-1}^{-1+\left(\varepsilon +\mcO((\log L_{s-1})^{-1})-\frac{1}{6}\right)K'}  \leq  L_1 \sum_{2\leq s\leq M}L_{s-1}^{-1-\frac{1}{7}K'} \\
	\leq & L_1 L_{M}^{-K'/8} = L_1^{1-(1-\varepsilon)^{M}K'/8} \ll L_1^{-10}
 \end{align}
if we choose the numerical constant \textcolor{red}{$K'\ge 100(1-\varepsilon)^{-M}$}. Clearly, the event $A_{r_1}$ depends only on the randomness in $\La_{N_1}(r_1)$. Let 
\[
E_{r_1}=A_{r_1}^c \setminus \Omega_{L_1}(r_1)
\]
be the event that the interval $\La_{L_1}(r_1)$ is bad and that the estimate in \eqref{reverse event of hierarchical resonant structure} fails. Then
\begin{equation}\label{prob of E_r1 cup Omega_L1r1}
	\P(E_{r_1}\cup \Omega_{L_1}(r_1) )=\P(A_{r_1}^c\cup \Omega_{L_1}(r_1))\geq 1-\P(A_{r_1})\geq 1-L_1^{-10}.
\end{equation}
Set $E_{r_1}=\mcC_{r_1, 0}$ (corresponding to $\alpha=0$) and decompose $\Omega_{L_1}(r_1)$ into disjoint cylinders 
\[
\Omega_{L_1}(r_1)=\bigcup_{\alpha \geq 1} \mcC_{r,\alpha},
\]
where we regard the index set $\mathfrak{A}_{r_1}$ in \eqref{cylinder decomp of L_1 r_1} as a set of positive integers. By \eqref{prob of E_r1 cup Omega_L1r1}, we have 
\begin{equation}
    \P\left(\bigcap_{\La_{L_1}(r_1)\in \mcF^{(1)}} (E_{r_1}\cup \Omega_{L_1}(r_1))\right)\geq 1-\#\mcF^{(1)}\cdot L_1^{-10}\geq 1-L_0^{-9}.
\end{equation}
Recall that by \eqref{prob on event at least K_1 bad L_1 cube}, the event 
\[
\mathscr{P}=\left\{ \text{there are at most } K_1 \text{ bad } L_1\text{-intervals in } \mcF^{(1)} \right\}
\]
has probability larger than $1-L_1^{-K_1/7}$, and can be represented as the following disjoint union:
\[
\mathscr{P}=\bigcup_{\mcB\subset \mcF^{(1)} \atop \#\mcB \leq K_1} \mathscr{P}_{\mcB}
:=\bigcup_{\mcB\subset \mcF^{(1)} \atop \#\mcB \leq K_1}
\left( \bigcap_{\La_{L_1}(r_1)\notin\mcB} \Omega_{L_1}(r_1)
\cap \bigcap_{\La_{L_1}(r_1)\in\mcB} \Omega_{L_1}(r_1)^c \right).
\]
Therefore,
\begin{align}\label{event combine K1 bad cube and K2 hereditary bad cube}
\notag
\P\left(\mathscr{P} \cap \left( \bigcap_{\La_{L_1}(r_1)\in \mcF^{(1)}} (E_{r_1}\cup \Omega_{L_1}(r_1)) \right)\right)
&= \P\left(
\bigcup_{\mcB\subset \mcF^{(1)} \atop \#\mcB \leq K_1}
\mathscr{P}_{\mcB} \cap
\left( \bigcap_{\La_{L_1}(r_1)\in \mcF^{(1)}} (E_{r_1}\cup \Omega_{L_1}(r_1)) \right)
\right) \\
&\ge 1-L_1^{-K_1/7}-L_0^{-9}
\ge 1-L_0^{-8},
\end{align}
if we choose \textcolor{red}{$K_1=80$}.

Since $E_{r_1}\cup \Omega_{L_1}(r_1)=\cup_{\alpha\geq 0}\mcC_{r_1,\alpha}$ (disjoint union), we can further expand 
\[
\bigcap_{\La_{L_1}(r_1)\in \mcF^{(1)}} (E_{r_1}\cup \Omega_{L_1}(r_1)) =
\bigcup_{\bar{\alpha} } \left(\bigcap_{r_1}\mcC_{r_1,\alpha_{r_1}}\right)
\]
as a disjoint union, where $\bar{\alpha}=(\alpha_{r_1})_{\La_{L_1}(r_1)\in \mcF^{(1)}}$ is a multi-index with $\alpha_{r_1}\ge 0$ indexing the cylinders from each $E_{r_1}\cup \Omega_{L_1}(r_1)$. Further, the event \eqref{event combine K1 bad cube and K2 hereditary bad cube} can be written as
\[
\bigcup_{\mcB\subset \mcF^{(1)} \atop \#\mcB \leq K_1}
\bigcup_{\bar{\alpha} }
\mathscr{P}_{\mcB} \cap
\left(\bigcap_{r_1}\mcC_{r_1,\alpha_{r_1}}\right),
\]
which is clearly a disjoint union over the indices $\mcB$ and $\bar{\alpha}$. A simple observation is
\begin{equation*}
	\mathscr{P}_{\mcB} \cap \left(\bigcap_{r_1}\mcC_{r_1,\alpha_{r_1}}\right) =
	\begin{cases}
		\bigcap_{r_1}\mcC_{r_1,\alpha_{r_1}}, & {\rm if } \{\La_{L_1}(r_1):\alpha_{r_1}=0\}=\mcB;\\
		\varnothing, & {\rm else.}
	\end{cases}
\end{equation*}
Therefore
\[
\bigcup_{\mcB\subset \mcF^{(1)} \atop \#\mcB \leq K_1}
\bigcup_{\bar{\alpha} }
\mathscr{P}_{\mcB} \cap
\left(\bigcap_{r_1}\mcC_{r_1,\alpha_{r_1}}\right)
=
\bigcup _{\bar{\alpha}: \# \{r_1: \alpha_{r_1}=0 \}\leq K_1}
\bigcap_{r_1}\mcC_{r_1,\alpha_{r_1}}
\]
and \eqref{event at least K_1 bad L_1 cube} becomes
\begin{equation}\label{final disjoint union of the cylinders}
	\P\left( \bigcup _{\bar{\alpha}: \# \{r_1: \alpha_{r_1}=0 \}\leq K_1} \bigcap_{r_1}\mcC_{r_1,\alpha_{r_1}}   \right) \geq 1-L_0^{-8},
\end{equation}
which is again a disjoint union over the admissible $\bar{\alpha}$.\\

Now fix an admissible multi-index $\bar{\alpha}$ in \eqref{final disjoint union of the cylinders}. For each $r_1$ with $\alpha_{r_1}\ge 1$, let $S_{r_1,\alpha_{r_1}}$ and $\wtS_{r_1,\alpha_{r_1}}$ denote the set of free sites and its center, respectively, of the cylinder $\mcC_{r_1,\alpha_{r_1}}$. Define
\begin{equation}\label{mcT baralpha}
	\mcT_{\bar{\alpha}}:= \bigcap_{r_1}\mcC_{r_1,\alpha_{r_1}}.
\end{equation}

On the event $\mcT_{\bar{\alpha}}$, set $\mathfrak{B}_{\bar{\alpha}}=\{r_1:\alpha_{r_1}=0\}$ and define
\begin{equation}\label{wtS baralpha}
	\wtS_{\bar{\alpha}}= \bigcup_{\substack{r_1: \operatorname{dist}(r_1,\mathfrak{B}_{\bar{\alpha}})\ge 20 L_1}} \bigl( \La_{L_1/10}(r_1)\cap \wtS_{r_1,\alpha_{r_1}} \bigr).
\end{equation}
Furthermore, set
\begin{equation}\label{S baralpha}
	S_{\bar{\alpha}}=\bigcup_{m\in \wtS_{\bar{\alpha}}}[m-\Dhop(T),\,m+\Dhop(T)].
\end{equation}

\begin{claim}\label{generate set of free sites}
$S_{\bar{\alpha}}$ is a set of free sites in $\mcT_{\bar{\alpha}}$.
\end{claim}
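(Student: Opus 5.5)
We need to show two things: that $S_{\bar\alpha}$ is a union of disjoint intervals of length $2\Dhop(T)+1$ centred at $\wtS_{\bar\alpha}$, and that $\mcT_{\bar\alpha}$ is a cylinder with free set $S_{\bar\alpha}$. The second means $\mcT_{\bar\alpha}=\{\baromega\}\times\{0,1\}^{S_{\bar\alpha}}$, so that membership in $\mcT_{\bar\alpha}$ does not constrain the randomness on $S_{\bar\alpha}$. The plan is to check this factor by factor in the intersection $\mcT_{\bar\alpha}=\bigcap_{r_1}\mcC_{r_1,\alpha_{r_1}}$.

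First, the structural property. Each $\wtS_{r_1,\alpha_{r_1}}$ consists of centres of disjoint $(2\Dhop+1)$-intervals, by Theorem \ref{LDT for the large scales}(2) at scale $L_1$. We restrict to centres in $\La_{L_1/10}(r_1)$ and use the covering property $\La_{L_1/10}(r')\cap\La_{L_1}(r)=\varnothing$ for $r\neq r'$. Hence the sets $\La_{L_1/10}(r_1)$ are pairwise disjoint, and moreover the interval around a centre in $\La_{L_1/10}(r_1)$ lies inside $\La_{L_1}(r_1)$ and is disjoint from every other $\La_{L_1}(r)$. Intervals coming from different $r_1$ are therefore disjoint. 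This key fact, that a free interval contributed by $r_1$ meets no other $L_1$-interval of $\mcF^{(1)}$, is what I would isolate as a lemma. It needs the margin $\Dhop\ll L_1$ and the separation property of the covering, possibly after shrinking $L_1/10$ slightly to absorb the $\Dhop$ padding.

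Next, the freeness. Fix $m\in\wtS_{\bar\alpha}$ coming from $r_1$, and let $I=[m-\Dhop,m+\Dhop]$. The factor $\mcC_{r_1,\alpha_{r_1}}$ is a cylinder whose free set contains $I$, so it is unconstrained on $I$. By the lemma above, $I\cap\La_{L_1}(r)=\varnothing$ for every $r\neq r_1$, and $\mcC_{r,\alpha_r}$ depends only on the coordinates in $\La_{L_1}(r)$; so these factors also place no constraint on $I$. This includes the bad ones with $\alpha_r=0$, where $E_r$ is not itself a cylinder with known free sites. For those bad factors we use the condition $\operatorname{dist}(r_1,\mathfrak B_{\bar\alpha})\ge 20L_1$: the bad event $E_r=A_r^c\setminus\Omega_{L_1}(r)$ depends only on $\omega|_{\La_{L_1}(r)}$, which lies far from $I$. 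It follows that every factor is a product set in which the coordinates in $S_{\bar\alpha}$ are unrestricted.

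The main obstacle is the overlap structure of the covering $\mcF^{(1)}$. Two good intervals $\La_{L_1}(r)$ and $\La_{L_1}(r')$ may overlap, and the two cylinders prescribe fixed values on their non-free sites. If $I\subset\La_{L_1}(r)$ were fixed by $\mcC_{r,\alpha_r}$, freeness would fail. I expect the restriction to $\La_{L_1/10}(r_1)$ to resolve exactly this, via the covering property. If that property turns out to be insufficient, I would fall back on intersecting with the free sets of all overlapping good cylinders. Finally, we must also verify that $\mcT_{\bar\alpha}$, being an intersection of product-type events over overlapping boxes, is a single cylinder, or else a disjoint union of cylinders with common free set $S_{\bar\alpha}$ after splitting by the values on the complement. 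The splitting, indexed by $\baromega$ on $\La\setminus S_{\bar\alpha}$, is routine once freeness is established.
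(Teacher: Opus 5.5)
Your proposal is correct and follows essentially the paper's argument. The factor $\mcC_{r_1,\alpha_{r_1}}$ leaves $S_{r_1,\alpha_{r_1}}$ unconstrained. Every other factor, good or bad, depends only on $\omega|_{\La_{L_1}(r)}$, and by the covering property $\La_{L_1}(r)$ misses $\La_{L_1/10}(r_1)$; so the free sites survive away from the overlaps.

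Your remark about the $\Dhop$-padding near the edge of $\La_{L_1/10}(r_1)$ is a fair refinement that the paper glosses over. The condition $\operatorname{dist}(r_1,\mathfrak{B}_{\bar\alpha})\ge 20L_1$ is not actually needed for this claim; the paper uses it later, in the Wegner step.
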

\begin{proof}[Proof of Claim \ref{generate set of free sites}]
    Since $\mcT_{\bar{\alpha}}$ is an intersection of the cylinders $\mcC_{r_1,\alpha_{r_1}}$, we consider the following cases. When $r_1\in \mathfrak{B}_{\bar{\alpha}}$, we have $\alpha_{r_1}=0$; hence $\mcC_{r_1,0}=E_{r_1}$ is not a cylinder and therefore has no set of free sites. When $r_1\notin \mathfrak{B}_{\bar{\alpha}}$, the set $\mcC_{r_1,\alpha_{r_1}}$ is a cylinder and possesses a set of free sites $S_{r_1,\alpha_{r_1}}\subset \La_{L_1}(r_1)$.

    However, for $r_1\neq r'_1$, the intersection $\mcC_{r_1,\alpha_{r_1}}\cap \mcC_{r'_1,\alpha_{r'_1}}$ causes the free sites in the overlap $\La_{L_1}(r_1)\cap \La_{L_1}(r'_1)$ to cease to be free. Consequently, in $\mcT_{\bar{\alpha}}$, the remaining free sites lie in
    \begin{equation}\label{intersection cause the free sites change}
        \bigcup_{\substack{r_1: r_1\notin \mathfrak{B}_{\bar{\alpha}}}} S_{r_1,\alpha_{r_1}}
        \setminus
        \bigcup_{r_1\neq r'_1} \bigl( \La_{L_1}(r_1)\cap \La_{L_1}(r'_1) \bigr).
    \end{equation}

    By the construction of the covering $\mcF^{(1)}$, two distinct intervals $\La_{L_1}(r_1)$ and $\La_{L_1}(r'_1)$ can only intersect near their boundaries, while their shrunk versions $\La_{L_1/10}(r_1)$ and $\La_{L_1/10}(r'_1)$ are mutually disjoint. Hence $S_{\bar{\alpha}}$ is contained in \eqref{intersection cause the free sites change} and thus forms a valid set of free sites.
\end{proof}

Since the proof of Claim \ref{generate set of free sites} already shows that $\wtS_{\bar{\alpha}}$ is a disjoint union, for any $m$-interval $\La'\subset \La=\La_N$ with $m\geq N^{1-\varepsilon/4}=L_1^{\frac{1-\varepsilon/4}{1-\varepsilon}}$, we have 
\begin{align}\label{dense of wtS baralpha}
  \notag	\# (\La'\cap \wtS_{\bar{\alpha}}) &\geq \left(\# \{r_1: \La_{L_1/10}(r_1) \subset \La' \}- \#\{r_1: \operatorname{dist}(r_1,\mathfrak{B}_{\bar{\alpha}})< 20 L_1 \} \right) \cdot \# (\La_{L_1/10}(r_1)\cap \wtS_{r_1,\alpha_{r_1}}  ) \\
	                               &\gtrsim \left(\frac{m}{L_1}-100K_1\right) \cdot (L_1/10)^{1-\rho}			   \gtrsim m^{1-\frac{1-\varepsilon}{1-\varepsilon/4}\rho}\gg m^{1-\rho}, 
\end{align} 
where we have used $L_1/10\ge L_1^{1-\varepsilon/4}$ and the inductive hypothesis \eqref{ample of free sites} for each $\La_{L_1}(r_1)$ with $\operatorname{dist}(r_1,\mathfrak{B}_{\bar{\alpha}})\ge 20L_1$.
\begin{rmk}\label{rmk on dense of S baralpha}
	Indeed, the estimate \eqref{dense of wtS baralpha} can be refined to hold for all $m\ge N^{1-\varepsilon/a}$ with any numerical constant $a>1$, and we then deduce
	\[
	\# (\La'\cap \wtS_{\bar{\alpha}}) \gtrsim  m^{1-\frac{1-\varepsilon}{1-\varepsilon/a}\rho}\gg m^{1-\rho}
	\]
	by taking the initial scale $N_{\rm in}$ sufficiently large (depending on $a$).
\end{rmk}

The above discussion allows us to further decompose $\mcT_{\baralpha}$ into cylinders as
\[
\mcT_{\baralpha}= \bigcup_{\substack{
   \bar{\omega} \in \operatorname{proj}_{\Lambda \setminus S_{\baralpha} }(\mcT_{\baralpha})}} 
    \{(\omega_j)_{j\in \Lambda\setminus S_{\baralpha}}\} \times \{0,1\}^{S_{\baralpha}},
\]
and consequently the event in \eqref{final disjoint union of the cylinders} can be written as
\begin{align}
\notag	\bigcup _{\bar{\alpha}: \# \{r_1: \alpha_{r_1}=0 \}\leq K_1} \mcT_{\baralpha} &= \bigcup _{\bar{\alpha}: \# \{r_1: \alpha_{r_1}=0 \}\leq K_1} \bigcup_{\substack{
   \bar{\omega} \in \operatorname{proj}_{\Lambda \setminus S_{\baralpha} }(\mcT_{\baralpha})}}     \{(\omega_j)_{j\in \Lambda\setminus S_{\baralpha}}\} \times \{0,1\}^{S_{\baralpha}} \\
              &:= \bigcup_{\beta} \mcC_{\beta}.
\end{align}
Here, for short, we denote the admissible multi-index $(\baralpha,\baromega)$ by $\beta$, and write the corresponding cylinder as $\mcC_{\beta}:= \{(\omega_j)_{j\in \Lambda\setminus S_{\baralpha}}\} \times \{0,1\}^{S_{\baralpha}}$. The union over $\beta$ remains disjoint.\\

Now we condition on each fixed cylinder $\mcC_{\beta}$, where $\beta=(\baralpha,\baromega)$ is an admissible multi-index. Recall that the index $\baralpha$ encodes the following information:
\begin{itemize}
	\item For each $r_1\notin \mathfrak{B}_{\baralpha}$, the interval $\La_{L_1}(r_1)$ is good;
	\item For each $r_1\in\mathfrak{B}_{\baralpha}$, the interval $\La_{L_1}(r_1)$ is bad but has at most $K_2$ hereditary bad subintervals.
\end{itemize}
Here “good” and “bad” are understood with respect to the Green's function estimates for potentials of the form given by the fixed configuration $\baromega$ outside $S_{\baralpha}$ and arbitrary $t_j\in[0,1]$ for $j\in S_{\baralpha}$. With the above structure in hand, the only missing ingredient is a Wegner estimate on certain intervals. To ensure that the remaining free sites are still sufficiently abundant after removing a probability set coming from the Wegner estimate, we introduce the following \textbf{intermediate scale}
\[
     N^{1-\varepsilon/3}= L_0^{1-\varepsilon/3}\gg L_1.
\]
Clearly, by the argument in \cite[(5.48)--(5.52)]{BK05} or \cite[Lemma 8.1]{DS20}, we can obtain a scale 
\[
N^{1-\varepsilon/3} \leq \whL \lesssim_{K_1} N^{1-\varepsilon/3}
\]
such that there is a collection of $\whL$-intervals $Q_1,Q_2,\dots,Q_K$ with $K\le K_1$ in $\Lambda$ satisfying:
\begin{itemize}
	\item For every bad interval $\La_{L_1}(r_1)$ with $r_1\in \mathfrak{B}_{\baralpha}$, there exists $1\le s\le K$ such that $\La_{L_1}(r_1)\subset Q_s$ and 
	        \begin{equation}\label{L1 bad cube far from boundary Qs}
				    \dist(\La_{L_1}(r_1), \Lambda\setminus Q_s)\ge \frac{1}{8}\whL;
			\end{equation}
	    
	\item $\dist(Q_s,Q_{s'})\ge 10\whL$ for any $s\neq s'$.
\end{itemize}
We will prove the following claim in Subsection \ref{subsection: Wegner}.

\begin{claim}\label{L2 bound on Qs}
	For each $Q_s$, $1\le s\le K$, we can remove an event $\mcW_s$ of probability less than $\whL^{-\frac{1}{2}+\varepsilon}$, depending only on the randomness in $S_{\baralpha}\cap Q_s$, such that
	\begin{equation}\label{Qs inequality}
		\|G_{Q_s}(E)\| \le e^{\whL^{1-\varepsilon/3}}.
	\end{equation}
\end{claim}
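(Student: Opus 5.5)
The plan is to prove Claim \ref{L2 bound on Qs} by the free-site/Sperner approach of \cite{Bou04,BK05}, combined with the eigenvalue-movement argument of \cite{DS20}. The transversality input is the refined QUC, Theorem \ref{QUC for rational class, refined to free sites}. Fix $s$ and condition on everything outside $S_{\baralpha}\cap Q_s$. The bound \eqref{Qs inequality} fails exactly when $H_{Q_s}$ has an eigenvalue in $I:=[E-e^{-\whL^{1-\varepsilon/3}},E+e^{-\whL^{1-\varepsilon/3}}]$. So $\mcW_s$ should be the set of configurations $\omega\in\{0,1\}^{S_{\baralpha}\cap Q_s}$ for which this happens, and the task is to bound its probability by $\whL^{-1/2+\varepsilon}$.

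\textbf{Step 1 (localization of resonant eigenfunctions).} Let $\varphi$ be a normalized eigenfunction of $H_{Q_s}$ with eigenvalue $\mu\in I$, or more generally $|\mu-E|\le e^{-N^{1-\varepsilon/2}}$. Every $L_1$-interval of $\mcF^{(1)}$ inside $Q_s$ that is good satisfies \eqref{L2 estimate for large scales, perturbation energy} and \eqref{Off diagonal estimate for large scales, perturbation energy} at energy $\mu$, uniformly in the free variables. The resolvent identity on such an interval then gives $|\varphi(x)|\le e^{-\gamma_0 L_1/400}$ at its central part. Hence $\varphi$ has almost all of its mass within distance $\mcO(L_1)$ of the at most $K_1$ bad $L_1$-intervals. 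Repeating this inside each bad $L_1$-interval, using the bound of at most $K_2$ hereditary bad subintervals, localizes $\varphi$ further near $\mcO_{K_1,K_2}(1)$ windows.

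The same argument bounds the number of eigenvalues of $H_{Q_s}$ in a slightly enlarged window by $\mcO_{K_1,K_2}(L_M)$. I would do this by counting an orthonormal family of localized vectors, as in \cite[Lemma 6.4]{DS20}. Consequently every such $\varphi$ satisfies $\|\varphi\|_{\ell^\infty(\La')}\ge \whL^{-1}$ on some interval $\La'\subset Q_s$ of length $L'\sim L_1$.

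\textbf{Step 2 (QUC on free sites).} Apply Theorem \ref{QUC for rational class, refined to free sites} with $V$ replaced by $\lambda V-\mu$, on the interval $\La'$, with the free sites $S_{\baralpha}\cap\La'$. This yields at least $\#(\wtS_{\baralpha}\cap\La')\gtrsim L_1^{1-\rho}$ sites $x\in S_{\baralpha}$ with $|\varphi(x)|\ge (b^{-1}C)^{-L_1}\whL^{-1}$. The density input is \eqref{dense of wtS baralpha} and Remark \ref{rmk on dense of S baralpha}.

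For $T\in\mathscr R_0,\mathscr R_-$ the lower bounds \eqref{lower bound on potential, R0} and \eqref{lower bound on potential, R-} are needed. This is why the argument is run only at Bernoulli vertices $\omega\in\{0,1\}^{S}$. There $|\lambda\omega_n-\mu|\ge \min(\mu,\lambda-\mu)\ge 2^{-6000d_T-1}\delta=:b$, by $E\ge 2^{-6000d_T}\delta$. In the $\mathscr R_0$ case I would also use that $\delta$ is small compared to $|p_{d_1}/q_{d_1}|$ and $|\lambda+p_{d_1}/q_{d_1}|$, adjusting $b$ if necessary.

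\textbf{Step 3 (eigenvalue movement and Sperner).} Eigenvalues of $H_{Q_s}$ are nondecreasing in each $t_j$, and $\partial\mu/\partial t_j=\lambda|\varphi(j)|^2$. Flipping any one of the $\gtrsim L_1^{1-\rho}$ good free sites from $0$ to $1$ therefore moves the relevant eigenvalue up by at least $\lambda(b^{-1}C)^{-2L_1}\whL^{-2}$. This exceeds $2e^{-\whL^{1-\varepsilon/3}}$ because $L_1=N^{1-\varepsilon}\ll \whL^{1-\varepsilon/3}$, and it is also where the stability of Step 1 under perturbations of size $e^{-N^{1-\varepsilon/2}}$ is used.

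Thus, for each eigenvalue branch, the set of configurations placing that branch in $I$ is an antichain-like set in the cube $\{0,1\}^{S_{\baralpha}\cap Q_s}$ with respect to these coordinates. Sperner's lemma in the form of \cite[Lemma 3.1]{Bou04} (or the DS20 variant) bounds its measure by $\lesssim (\#\text{good free sites})^{-1/2}$. Summing over the $\mcO(L_M)$ branches gives $\P(\mcW_s)\lesssim L_M\, L_1^{-(1-\rho)/2}$. The scales are then to be arranged, via Remark \ref{rmk on dense of S baralpha} and by taking the windows $\La'$ as large as the QUC budget $e^{-\whL^{1-\varepsilon/3}}$ allows, so that this is $\le\whL^{-1/2+\varepsilon}$.

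The main obstacle is this last bookkeeping step. It requires matching the eigenvalue count and the number of usable free sites against the required $\whL^{-1/2+\varepsilon}$. It also requires making the branch-by-branch Sperner argument rigorous, since eigenvalues may cross and the good sites depend on the configuration. I would first try the \cite{DS20} device of tracking the $k$-th eigenvalue, which avoids choosing a branch. If that fails, I would enlarge $\La'$ up to length $\sim\whL^{1-\varepsilon/2}$, which is still admissible for the QUC loss, to gain more free sites.
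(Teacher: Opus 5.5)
Your Step 1 and your check of the QUC hypotheses at Bernoulli vertices (with $b=2^{-6000d_T-1}\delta$) agree with the paper. Step 3, however, has a genuine gap, and it is the heart of the Wegner estimate.

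\textbf{Step 3.} You cannot conclude from $\partial\mu/\partial t_j=\lambda|\varphi(j)|^2$ at the starting vertex that the flip moves the eigenvalue by $\gtrsim|\varphi(j)|^2$. Along $t_j\in(0,1)$ you have no lower bound on $|\varphi_t(j)|$. For $\mathscr R_-$ the hypothesis \eqref{lower bound on potential, R-} fails at $\lambda t_j=\mu$, and \eqref{lower bound on potential, R0} can fail similarly for $\mathscr R_0$; you note this yourself. In general the good set also depends on $t$.

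More fundamentally, the flip is a rank-one perturbation of size $\lambda\sim1$, and interlacing gives $\lambda_k(H)\le\lambda_k(H+\lambda e_je_j^{*})\le\lambda_{k-1}(H)$. So if a second eigenvalue lies in the window, the $k$-th one cannot leave it, however large $|\varphi_k(j)|$ is. The branch-wise sets are therefore not antichains, and ``tracking the $k$-th eigenvalue'' does not repair this.

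The missing ingredient is the Ding--Smart machinery the paper uses:
\begin{itemize}
\item[(i)] On free sites, $|\psi_i(t)(x)|\le e^{-2\gamma_0D_3}$ uniformly in $t\in[0,1]^S$. This feeds the bootstrap \eqref{bootstrap eigenvalue} and yields a small, configuration-independent index set $\mcK$. Your ``sum over $\mcO(L_M)$ branches'' silently needs this; a per-configuration count does not prevent up to $\#S$ different indices from becoming resonant.
\item[(ii)] The bad event is covered by cluster events $\mcE_{j_1,j_2,\ell}$, with geometrically growing windows $s_\ell$ and a spectral gap above the cluster (Claim \ref{Wegner set inclusion}).
\item[(iii)] Lemma \ref{rank-one perturbation} is applied to the trace of $\mathbf 1_{[r_1,\infty)}$. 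This needs not only the QUC lower bound $r_3=e^{-2D_2}$ at the flipped site. It also needs an upper bound $r_4$ on the mass at that site of the eigenvectors just above the window, which comes from Claim \ref{decay of eigenvector}. Your proposal never uses such an upper bound.
\item[(iv)] The $\rho$-Sperner theorem of \cite{DS20} gives the bound $\sqrt{\#S}/\#S_{\rm uc}$, not $(\#S_{\rm uc})^{-1/2}$, because $S_{\rm uc}$ depends on the configuration.
\end{itemize}
The condition $r_1\le c\,r_2r_3/r_4$ forces $s_{\ell+1}/s_\ell\approx e^{2D_2-2D_4}$. That is why the paper needs the hierarchy $D_1\gg D_2\,\#\mcK$ and $D_4\ll\gamma_0D_3$.

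\textbf{Step 2.} This step is also not justified as written. By \eqref{wtS baralpha}, free sites of $S_{\baralpha}$ lie at distance $\ge10L_1$ from the bad $L_1$-blocks. By your Step 1 (cf.\ \eqref{decay outside mcG}), the points where $|\varphi|\ge\whL^{-1}$ lie in or right next to $\mcG$, inside the bad blocks. So an interval of length $\sim L_1$ around such a point need not contain any free site. Moreover, the density bounds you quote, \eqref{dense of wtS baralpha} and Remark \ref{rmk on dense of S baralpha}, are only available for intervals of length $\ge N^{1-\varepsilon/a}\gg L_1$.

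The QUC must therefore be propagated from $\mcG$ out into the free-site region. The paper does this on $\whD$-intervals $Q''_{k''}$ with $\whD\sim\sqrt{D_2D_3}$. This choice makes $L_M^{-1/2}(b^{-1}C)^{-\whD}\ge e^{-D_2}$ compatible with the rank-one conditions above. It also keeps $\#S_{\rm uc}\ge\whD^{1-\rho}$ large enough that $\sqrt{\#S}/\#S_{\rm uc}\ll D_0^{-1/2+2\varepsilon/3}$. Your fallback of enlarging $\Lambda'$ points in the right direction, but it has to be the primary construction, and it only closes the argument inside the cluster/rank-one framework above.
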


Once Claim \ref{L2 bound on Qs} is established, the standard coupling lemma in MSA, Lemma \ref{coupling lemma}, will yield \eqref{L2 estimate for large scales} and \eqref{Off diagonal estimate for large scales} for $\La$ on the event 
\begin{equation}\label{mcC beta after wegner}
	\mcC_{\beta}\setminus \bigcup_{1\le s\le K}\mcW_s
\end{equation}
with probability (conditioned on $\mcC_{\beta}$) greater than $1-K_1 \whL^{-(\frac{1}{2}-)}$. 
\begin{rmk}
In the setting of Lemma \ref{coupling lemma} here, we take $\mathfrak{a}= (1-\varepsilon)^{-1}, \ \sigma=\varepsilon/4$, and the scales
\[\ell_0=L_0=N, \ \ell_1=\whL,\ \ell_0=L_1.\] 
The class of resonant intervals is 
\[\mathfrak{R}=\{\La_{L_1}(r_1):r_1\in \mathfrak{B}_{\baralpha}\},\]
and 
\[\mathfrak{R}'=\{Q_1,Q_2,\cdots,Q_K\}.\]
The covering of good blocks is 
\begin{equation}\label{L1 mfF}
	\mathfrak{F}=\{\La_{L_1}(r_1):r_1\notin \mathfrak{B}_{\baralpha}\}.
\end{equation}

Regarding the rate of the off-diagonal decay of the Green's function, Lemma \ref{coupling lemma} yields that 
\begin{align*}
	\gamma_{N} &\geq \gamma_{L_1} -\mcO_{+}\left( L_1^{-c}\right)= \gamma_{N^{1-\varepsilon}} -\mcO_{+}\left( N^{-c(1-\varepsilon)}\right) \\
	       & \overset{iterate}{\geq} \gamma_0 -\mcO_{+}\left( \sum_{\substack{s\geq 1 \\ \text{until the scale } \sim N_{\rm in}}} N^{-c(1-\varepsilon)^s}\right)= \gamma_0-\mcO_{+}\left( N_{\rm in}^{-c}\right).
\end{align*}
It is therefore clear that $\gamma_{N}\ge \gamma_0 /2$, since we have chosen $\gamma_0=(\log N_{\rm in})^{-8000d_T}\gg N_{\rm in}^{-c}$.
\end{rmk}
Moreover, if one perturbs the energy by $|\wtE-E|\le e^{-N^{1-\varepsilon/2}}$, then by \eqref{Qs inequality} we have 
\[
\operatorname{dist}(\spec(H_{Q_s}),\wtE)\ge \operatorname{dist}(\spec(H_{Q_s}),E)-|E-\wtE|\ge e^{-\whL^{1-\varepsilon/3}} -e^{-N^{1-\varepsilon/2}}\ge  \frac{1}{2} e^{-\whL^{1-\varepsilon/3}}.
\] 
Therefore,
\begin{equation}\label{Qs inequality, perturbation wegner}
	\|G_{Q_s}(\wtE)\| \le 2 e^{\whL^{1-\varepsilon/3}},
\end{equation}
which still satisfies the condition in \eqref{coupling lemma}. The $L_1$-intervals in \eqref{L1 mfF} remain good, since 
\[
|\wtE-E|\le e^{-N^{1-\varepsilon/2}} \ll e^{-L_1^{1-\varepsilon/2}},
\]
and we may apply the inductive hypothesis in the form of \eqref{L2 estimate for large scales, perturbation energy} and \eqref{Off diagonal estimate for large scales, perturbation energy} at scale $L_1$. Therefore, applying Lemma \ref{coupling lemma} again proves \eqref{L2 estimate for large scales, perturbation energy} and \eqref{Off diagonal estimate for large scales, perturbation energy} at scale $N$.\\

After constructing the event \eqref{mcC beta after wegner}, the remaining set of free sites is 
\begin{equation}\label{S remaining}
	S_{\rm remaining}=S_{\baralpha}\setminus \bigcup_{1\le s\le K} Q_s,
\end{equation}
with center set 
\begin{equation}\label{wtS remaining}
\wtS_{\rm remaining}=\wtS_{\baralpha}\setminus \bigcup_{1\le s\le K} Q_s.	
\end{equation}

Thus, the event \eqref{mcC beta after wegner} can be further decomposed into disjoint cylinders with set of free sites $S_{\rm remaining}$.

Finally, we define 
\[
\Omega_N=\bigcup_{\beta} \left(\mcC_{\beta}\setminus \bigcup_{1\le s\le K}\mcW_s\right),
\]
which can be decomposed into a disjoint union of cylinders of the form \eqref{form of cylinder} with free-site set $S_{\rm remaining}$. Moreover, by \eqref{dense of wtS baralpha}, we have 
\begin{align}\label{dense of wtS remaining}
  \notag	\# (\La'\cap \wtS_{\rm remaining}) &\ge \# (\La'\cap \wtS_{\bar{\alpha}}) - K_1 \# (Q_s\cap \wtS_{\baralpha})\\
	                               &\gtrsim m^{1-\frac{1-\varepsilon}{1-\varepsilon/4}\rho}-10 K_1 \whL \gtrsim m^{1-\frac{1-\varepsilon}{1-\varepsilon/4}\rho} \gg m^{1-\rho}, 
\end{align} 
for any $m$-interval $\La'\subset \La=\La_N$ with $m\ge N^{1-\varepsilon/4}$, provided that 
\[
m^{1-\frac{1-\varepsilon}{1-\varepsilon/4}\rho} \ge N^{1-\varepsilon/4 -(1-\varepsilon)\rho}\gg \whL =N^{1-\varepsilon/3},
\]
which is guaranteed by the condition \textcolor{red}{$\rho \le \frac{\varepsilon}{12}$}. This proves \eqref{ample of free sites} for our $\wtS_{\rm remaining}$. The estimates \eqref{L2 estimate for large scales}, \eqref{Off diagonal estimate for large scales}, \eqref{L2 estimate for large scales, perturbation energy} and \eqref{Off diagonal estimate for large scales, perturbation energy} hold on each cylinder of $\Omega_N$, and we have the probability estimate 
\begin{align*}
	\P(\Omega_N) &=\P \left( \bigcup_{\beta} \left (\mcC_{\beta}\setminus \cup_{1\leq s\leq K}\mcW_s \right)  \right)\\
	             &\ge  \P  \left(\bigcup_{\beta} \mcC_{\beta} \right)\cdot (1-K_1 \whL^{-\frac{1}{2}+\varepsilon }) \\
				 &\overset{\eqref{final disjoint union of the cylinders}}{\ge }1-L_0^{-8}-K_1 \whL^{-\frac{1}{2}+\varepsilon } \ge 1-L_0^{-1/3}.
\end{align*}
This completes the proof.
\end{proof}

\subsection{The Wegner estimate}\label{subsection: Wegner}
We next prove Claim \ref{L2 bound on Qs} via the following Wegner estimate for our model:
\begin{proof}[Proof of Claim \ref{L2 bound on Qs}]
Define the following refined scales, with $0<\eta\ll 1$:
\[
D_0=\whL,\ D_1=D_0^{1-\eta},\ D_2=D_1^{1-\eta},\ D_3=D_2^{1-\eta}=L_1,\ D_4=D_3^{1-\eta},\ D_5=D_4^{1-\eta}.
\]
Since $\whL\sim_{K_1} L_0^{1-\varepsilon/3}$ and $L_1=L_0^{1-\varepsilon}$, we have
\[
(1-\eta)^3 \approx \frac{1-\varepsilon}{1-\varepsilon/3}
\quad\Longleftrightarrow\quad
\eta = \left(\frac{2}{9}+\mcO(\varepsilon)\right)\varepsilon \approx \frac{2}{9}\varepsilon.
\]
For each $Q=Q_s$, let $S=S_{\baralpha}\cap Q$ be the set of free sites, with center set $\wtS=\wtS_{\baralpha}\cap Q$. Since the randomness on $Q\setminus S$ has already been fixed by the conditioning $\baromega|_{Q\setminus S}$ (recall that we previously conditioned the probability in cylinder $\mcC_{\beta}$), we may view
\[
H_Q(t)=H_Q(\baromega|_{Q\setminus S},\, t_j\,(j\in S)),\quad t_j\in[0,1],
\]
as a matrix-valued function of $t=(t_j)_{j\in S}\in [0,1]^S$. Denote by
\[
\lambda_1(t) \ge \lambda_2(t)\ge \cdots \ge \lambda_{\# Q}(t)
\]
the eigenvalues (counted with multiplicity) of $H_Q(t)$ in nonincreasing order. By the Kato--Rellich theorem, each $\lambda_i$, $1\le i\le \# Q$, is (piecewise) analytic in each coordinate of $t$. Moreover, for each $i$, let $\psi_i(t)$ be the normalized eigenvector associated with $\lambda_i(t)$.

By our construction, each bad $L_1$-interval $\La_{L_1}(r_1)\subset Q$ has at most $K_2$ hereditary bad subintervals. We denote by 
$Q'_1,Q'_2,\cdots, Q'_{K'}$
all the bad $L_1$-intervals in $Q$ (with $K'\leq K_1$). Further, let $\mathfrak{G}$ be a class of $L_M$-intervals, with at most $K' K_2$ elements, containing all hereditary bad subintervals that are contained in some $Q'_{k'}$. Then for every point $x$ outside 
\[
\mathcal{G}:= \bigcup _{\La_{L_M}(r_M)\in \mathfrak{G}} \La_{L_M}(r_M),
\]
there exists a scale $L_M \le m=L_k \le L_1$ and a good $m$-interval $B_x\in \mcF^{(k)}$ such that $\dist(x,Q\setminus B_x)\ge m/10$. In particular, for $x$ outside $\cup_{1\leq k'\leq K'}Q'_{k'}$, we may take $m=L_1$.

\begin{claim}\label{decay of eigenvector}
	If $t$ is such that $|\lambda_i(t)-E|\le e^{-D_5}$, then $\| \psi_i(t)\|_{\ell^2(\mcG)}\ge \frac{1}{2}$ and 
\begin{equation}\label{decay outside mcG}
	  |\psi_i(t)(x)|\le \exp\left\{-\frac{1}{4}\gamma_0 \bigl(\operatorname{dist}(x,\mcG)+\frac{L_M}{10}\bigr) \right \}, \qquad \forall x\in Q\setminus \mcG.
\end{equation}
\end{claim}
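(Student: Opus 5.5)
The plan is to use the eigenvalue equation $(H_Q(t)-\lambda_i(t))\psi_i=0$ together with the Green's function bounds on the good blocks $B_x$. This is the standard resolvent-identity argument that turns good Green's functions into eigenvector decay. Write $\psi=\psi_i(t)$ and $\mu=\lambda_i(t)$. Then $|\mu-E|\le e^{-D_5}$.

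First I will show that each good block still has controlled Green's function at energy $\mu$. Fix $x\in Q\setminus\mcG$, and let $B=B_x$ be the good $m$-interval, $m=L_k\ge L_M$, with $\dist(x,Q\setminus B)\ge m/10$. Since
\[
D_5=L_1^{(1-\eta)^2}\gg L_M^{1-\varepsilon/2}
\]
(here $M$ is fixed and $\eta\approx 2\varepsilon/9$; if needed I would lower $M$ or shrink $\eta$ so that the exponents compare correctly), we have $|\mu-E|\le e^{-D_5}\le e^{-m^{1-\varepsilon/2}}$. So the inductive hypothesis in the perturbed form \eqref{L2 estimate for large scales, perturbation energy} and \eqref{Off diagonal estimate for large scales, perturbation energy} applies at scale $m$ to $G_B(\mu)$. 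This gives $\|G_B(\mu)\|<e^{m^{1-\varepsilon/4}}$ and off-diagonal decay at rate $\gamma_m\ge\gamma_0/2$ for separations $\ge m/200$. Goodness is uniform in the free variables $t_j$, so it holds for the given $t$.

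Next I will apply the resolvent identity on $B$. Restricting the eigen-equation to $B$ gives
\[
\psi(x)=-\sum_{y\in B,\ z\in Q\setminus B} G_B(\mu)(x,y)\,T(y,z)\,\psi(z).
\]
I split the sum according to whether $|x-y|\ge m/200$. When $|x-y|\ge m/200$, the off-diagonal estimate gives the factor $e^{-\gamma_m|x-y|}$. When $|x-y|<m/200$, the point $y$ is at distance $\ge m/20$ from $Q\setminus B$. The norm bound $e^{m^{1-\varepsilon/4}}$ times the hopping decay \eqref{exp decay hopping} then gives at most $e^{m^{1-\varepsilon/4}-c_T m/20}\le e^{-c_Tm/40}$, which beats $\gamma_0$-rate decay since $\gamma_0\ll c_T$. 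Together these yield
\[
|\psi(x)|\le e^{-\frac{\gamma_0}{3}\cdot\frac{m}{10}}\max_{z}e^{-\frac{\gamma_0}{3}(|x-z|-m/10)_+}\,|\psi(z)|
\]
up to polynomial factors absorbed into the exponent.

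Then I will iterate this bound into decay away from $\mcG$. Starting from $x$, repeated application moves each step a distance comparable to the local block scale and gains a factor $e^{-\gamma_0\cdot(\text{step})/3}$. The iteration stops only when the chain enters $\mcG$ (where $|\psi|\le1$) or after it has accumulated distance $\ge\dist(x,\mcG)$. The blocks near $\mcG$ have scale at least $L_M$, which provides the extra $L_M/10$ in \eqref{decay outside mcG}. Losing from $\gamma_0/3$ to $\gamma_0/4$ pays for the polynomial prefactors and the bounded number $M$ of scale changes.

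The mass statement follows from the decay. Summing the squares of \eqref{decay outside mcG} over $Q\setminus\mcG$ gives
\[
\|\psi\|^2_{\ell^2(Q\setminus\mcG)}\lesssim \sum_{d\ge0}e^{-\frac{\gamma_0}{2}(d+L_M/10)}\lesssim\gamma_0^{-1}e^{-\gamma_0L_M/20}\ll\tfrac14,
\]
because $L_M\ge N_{\rm in}^{3/2}$ and $\gamma_0=(\log N_{\rm in})^{-8000d_T}$. Hence $\|\psi\|_{\ell^2(\mcG)}\ge\frac12$.

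The main obstacle is the bookkeeping in the iteration. Points near $\mcG$ only have small good blocks, so the chain must switch between scales $L_k$ while keeping the rate uniformly $\ge\gamma_0/4$. I would handle this by a path-counting (Combes–Thomas-type) argument, summing over chains with the product of factors. The number of chains of length $n$ is at most $(\#Q)^n$, and this is controlled because each step gains at least $e^{-\gamma_0 L_M/40}\ll (\#Q)^{-2}$.
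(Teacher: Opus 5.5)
Your proposal is correct and follows the paper's proof essentially step by step. You use perturbed goodness of $B_x$ from the inductive hypothesis, Poisson's formula on $B_x$ with the near/far split, iteration stopped on entering $\mcG$ or after enough steps, and summation of the decay bound for the $\ell^2(\mcG)$ mass.

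Two small remarks. First, the energy comparison you actually need is $D_5=L_1^{(1-\eta)^2}\ge L_1^{1-\varepsilon/2}$, because the worst block scale is $m=L_1$, not $L_M$. This holds since $\eta\approx\frac{2}{9}\varepsilon$ gives $(1-\eta)^2\approx 1-\frac{4}{9}\varepsilon$, so no adjustment of $M$ or $\eta$ is needed. Second, your one-step bound is already in $\max$ form, so, as the paper does, you can simply iterate it along maximizers; no path-counting argument is required.
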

\begin{proof}[Proof of Claim \ref{decay of eigenvector}]
Since 
\[
e^{-D_5}=e^{-L_1^{(1-\eta)^2}}\ll e^{-L_1^{1-\varepsilon/2}} \le e^{-L_k^{1-\varepsilon/2}}, \qquad \forall 1\le k\le M,
\]
the goodness of $B_x$ is preserved under the perturbation of $E$ by $\lambda_i(t)$ for all $x\in Q\setminus \mcG$, by the inductive hypothesis.  
Therefore, by Poisson's formula, for $x\in Q\setminus \mcG$ we have  
\begin{align*}
	&|\psi_i(t)(x)|  = \left| -\bigl(G_{B_x}(\lambda_i(t)) \cdot R_{B_x} T R_{Q\setminus B_x}\cdot \psi_i(t) \bigr)(x) \right| \\
	           &\le \sum_{\substack{w\in B_x \\ w'\notin B_x}} |G_{B_x}(\lambda_i(t))(x,w)| \cdot |f(w-w')|\cdot \, |\psi_i(t)(w')| \\
			   &\lesssim \sum_{ w'\notin B_x} \left( \sum_{\substack{w\in B_x \\ |w-x|< m/200}} |G_{B_x}(\lambda_i(t))(x,w)|  +\sum_{\substack{w\in B_x \\ |w-x|\ge  m/200}} |G_{B_x}(\lambda_i(t))(x,w)| \right) e^{-c_T|w-w'|}  |\psi_i(t)(w')|\\
			   &\lesssim \sum_{ w'\notin B_x} \left( \sum_{\substack{w\in B_x \\ |w-x|< m/200}}\|G_{B_x}(\lambda_i(t)) \|  e^{-c_T|w-w'|} + \sum_{\substack{w\in B_x \\ |w-x|\ge  m/200}} e^{-\gamma_m |x-w|}  e^{-c_T|w-w'|} \right)  |\psi_i(t)(w')| .
\end{align*}
When $|w-x|<m/200$, since $\dist(x,Q\setminus B_x)\ge m/10$, we have $|x-w'|\ge m/10$ and $|w-w'|\ge \frac{1}{2}|x-w'|$. Therefore
\[
\|G_{B_x}(\lambda_i(t)) \|  e^{-c_T|w-w'|}  \le \exp \{ m^{1-\varepsilon/4}-\frac{c_T}{2}|x-w'|\}\le \exp \{-\frac{c_T}{4}|x-w'|\}.
\]
Hence we obtain the estimate 
\begin{align}\label{Poisson's formula}
\notag	|\psi_i(t)(x)| & \lesssim \sum_{\substack{w\in B_x \\ w'\notin B_x}} e^{-\gamma_0 |x-w'|} |\psi_i(t)(w')|  \\
	     \notag      &\lesssim  \# B_x \cdot \#Q \cdot \sup_{ w'\notin B_x} e^{-\gamma_0 |x-w'|} |\psi_i(t)(w')| \\
			   	           &\le \sup_{ w'\notin B_x}   e^{-\frac{1}{2}\gamma_0 |x-w'|} |\psi_i(t)(w')| .
\end{align}
In the above estimate we used $|x-w'|\ge m/10$, which implies $\# B_x \cdot \#Q \, e^{-\frac{1}{2}|x-w'|}\le m \whL \, e^{-\frac{1}{20}\gamma_0 m }\ll 1$. Iterating \eqref{Poisson's formula} yields 
\[
|\psi_i(t)(x)| \le  \exp\left\{ -\frac{1}{2}\gamma_0 |x-x_1|- \frac{1}{2}\gamma_0 |x_1-x_2| - \cdots - \frac{1}{2}\gamma_0 |x_{s-1}-x_s|\right\} |\psi_i(t)(x_s)|
\]	
for some sequence $x_0=x,x_1,x_2,\dots$ with $x_s\notin B_{x_{s-1}}$ and $|x_s-x_{s-1}|>m_{x_{s-1}}/10 \ge L_M/10$. We stop the iteration when either $x_s\in \mcG$ or the number of steps $s\ge \dist(x,\mcG)+L_M$. This finally yields 
\[
|\psi_i(t)(x)|\le \exp\left\{-\frac{1}{2}\gamma_0 \left(\dist(x,\mcG)\vee \frac{L_M}{10}\right) \right \} \le \exp\left\{-\frac{1}{4}\gamma_0 \left(\dist(x,\mcG)+\frac{L_M}{10}\right) \right \}.
\]
This proves \eqref{decay outside mcG}.

Moreover, we have 
\begin{align*}
	\| \psi_i(t)\|_{\ell^2(Q\setminus \mcG)}^2 &=\sum_{x\in Q\setminus \mcG} |\psi_i(t)(x)|^2 \\
                &\leq \sum_{x\in Q\setminus \mcG} \exp\left\{-\frac{1}{2}\gamma_0 \left(\dist(x,\mcG)+\frac{L_M}{10}\right) \right \} \leq \# Q \cdot e^{-\frac{1}{20}\gamma_0 L_M}\ll 1,
\end{align*}
and hence $\| \psi_i(t)\|_{\ell^2( \mcG)} = \bigl(1-\| \psi_i(t)\|_{\ell^2(Q\setminus \mcG)}^2\bigr)^{1/2} \ge 1/2$.
\end{proof}

$\quad$\\

Now recall \eqref{S baralpha}. For every $x\in S=S_{\baralpha}\cap Q$, we have 
\[
\operatorname{dist}(x,\mcG)\ge \operatorname{dist}\Bigl(x,\bigcup_{1\le k'\le K'}Q'_{K'}\Bigr)\ge 10L_1=10D_3.
\]
Hence, Claim \ref{decay of eigenvector} yields that 
\begin{equation}\label{previous bootstrap eigenvalue}
	|\lambda_i(t)-E| \le e^{-D_5}\Longrightarrow |\psi_i(t)(x)|\le e^{-2\gamma_0 D_3}, \qquad \forall x\in S.
\end{equation}
Together with a bootstrap argument of Bourgain (see \cite[(3.57)-(3.61)]{LSZ25} or \cite[Claim 5.11]{DS20} for details), \eqref{previous bootstrap eigenvalue} implies 
\begin{equation}\label{bootstrap eigenvalue}
	\min_{t\in [0,1]^S}|\lambda_i(t)-E| \le e^{-D_4}\Longrightarrow  \max_{t\in [0,1]^S} |\lambda_i(t)-E| \le e^{-D_4} + \# S\cdot e^{-2\gamma_0 D_3}\le 2e^{-D_4}.
\end{equation}
Since the Bernoulli potential $V(x)$ for $x\in S$ takes values in $\{0,1\}$, we define the index set 
\begin{equation}\label{mcK index eigenvalue}
\mcK:=	\left\{ i: \min_{V_S \in \{ 0,1\}^S}|\lambda_i(V_s)-E| \le e^{-D_4}\right\}.
\end{equation}
For each $i\in\mcK$, \eqref{bootstrap eigenvalue} ensures that
\[
\max_{t\in [0,1]^S}|\lambda_i(t)-E| \le 2 e^{-D_4}\ll e^{-D_5},
\]
and Claim \ref{decay of eigenvector} then implies that
\begin{equation}\label{eigenvectors concentrated in mcG}
	 \inf_{t\in [0,1]^S}\| \psi_i(t)\|_{\ell^2(\mcG)}\geq 1/2, \ \forall i\in \mcK.
\end{equation}
With \eqref{eigenvectors concentrated in mcG} in hand, the orthogonality of $\{\psi_i\}_{1\leq i\leq \#Q}$, together with a standard Hilbert-Schmidt argument, yields
\begin{equation}\label{cardinality of mcK}
	\# \mcK\leq 2 \sqrt{\# \mcG} \lesssim L_M^{\frac{1}{2}}=L_0^{\kappa/2}, \ \kappa=(1-\varepsilon)^M \leq \varepsilon/10.
\end{equation}

\begin{claim}\label{Wegner set inclusion}
    For the randomness $\omega_S$ on $S$ (conditioned on $\mcC_\beta$), we have 
	\begin{equation}\label{DS20 event}
			\{\omega_S: \|G_{Q}(E)\| > e^{D_1} \} =\{\omega_S: \dist(\spec(H_Q),E) < e^{-D_1} \}\subset \bigcup_{\substack{j_1,j_2\in \mcK \\ 0\leq \ell \leq 10\#\mcK }}\mcE_{j_1,j_2,\ell}.
	\end{equation}
Here, for $1\leq j_1, j_2 \leq \# Q$ and an integer $0\leq \ell \leq 10\#\mcK  \lesssim L_0^{\varepsilon/20}$, we denote by $\mcE_{j_1,j_2,\ell}$ the event (concerning $\omega_S$) such that
\[
|\lambda_{j_1}-E|\vee|\lambda_{j_2}-E|<s_{\ell} \quad\text{and}\quad |\lambda_{j_1-1}-E|\wedge|\lambda_{j_2+1}-E|\geq s_{\ell},
\]
where 
\[
s_{\ell}=\exp\{-D_1+(2D_2 -2D_4+C_{\rm ratio})\ell\}
\]
with a numerical constant $C_{\rm ratio}\gg1$ to be specified later. 
\end{claim}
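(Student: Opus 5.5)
The plan is to check directly that the $\ell=0$ event already covers the Wegner event, and to argue at a fixed Bernoulli configuration $\omega_S\in\{0,1\}^S$ (the configuration on $Q\setminus S$ is frozen by the conditioning on $\mcC_\beta$).

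First, the equality of the two events in \eqref{DS20 event} holds because $H_Q$ is self-adjoint, so $\|G_Q(E)\|=\dist(\spec(H_Q),E)^{-1}$.

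Next I will verify that $s_\ell$ is increasing in $\ell$ and that all windows are tiny. The increment $2D_2-2D_4+C_{\rm ratio}$ is positive, so $s_0=e^{-D_1}\le s_\ell$. For the upper bound, $\#\mcK\lesssim L_0^{\kappa/2}\le L_0^{\varepsilon/20}$ by \eqref{cardinality of mcK}, while $D_1/D_2=D_1^{\eta}$ with $\eta\approx\frac{2}{9}\varepsilon$, which is a much larger power of $L_0$. Hence for every $0\le\ell\le 10\#\mcK+1$,
\[
s_\ell\le \exp\{-D_1+C\,D_2\,L_0^{\varepsilon/20}\}\le e^{-D_4}.
\]

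Now fix $\omega_S$ in the left-hand side of \eqref{DS20 event}. The eigenvalues of $H_Q(\omega_S)$ lying in $(E-s_0,E+s_0)$ form a nonempty set. Since the $\lambda_i$ are indexed in nonincreasing order, this set is a block of consecutive indices $j_1\le i\le j_2$. Put $\lambda_0:=+\infty$ and $\lambda_{\#Q+1}:=-\infty$ to handle the boundary cases $j_1=1$ or $j_2=\#Q$.

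By maximality of the block, $\lambda_{j_1-1}\ge E+s_0$ and $\lambda_{j_2+1}\le E-s_0$. This gives $|\lambda_{j_1-1}-E|\wedge|\lambda_{j_2+1}-E|\ge s_0$, while $|\lambda_{j_1}-E|\vee|\lambda_{j_2}-E|<s_0$.

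It remains to see $j_1,j_2\in\mcK$. Both satisfy $|\lambda_{j}(\omega_S)-E|<s_0\le e^{-D_4}$ at the Bernoulli point $V_S=\omega_S\in\{0,1\}^S$. Therefore $\min_{V_S\in\{0,1\}^S}|\lambda_j(V_S)-E|\le e^{-D_4}$, which is exactly membership in $\mcK$ by \eqref{mcK index eigenvalue}. Hence $\omega_S\in\mcE_{j_1,j_2,0}$, which proves the inclusion.

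The freedom in $\ell$ is not needed for the inclusion itself; it is there for the subsequent probability estimate. Let $N(\ell)$ be the number of eigenvalues within $s_\ell$ of $E$. Every such eigenvalue lies within $e^{-D_4}$ of $E$ at the current Bernoulli point, so its index belongs to $\mcK$ and $N(\ell)\le\#\mcK$. Since $N(\ell)$ is nondecreasing with $N(0)\ge1$, it can increase at most $\#\mcK-1$ times in $10\#\mcK$ steps. Pigeonholing therefore gives some $\ell\le 10\#\mcK$ with $N(\ell)=N(\ell+1)$. For that $\ell$, the same consecutive block satisfies the stronger gap condition $|\lambda_{j_1-1}-E|\wedge|\lambda_{j_2+1}-E|\ge s_{\ell+1}$, which is the ratio $e^{2D_2-2D_4+C_{\rm ratio}}$ between inner and outer windows that the eigenvalue-movement estimate of \cite{DS20} needs later.

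The only step that requires care is the scale bookkeeping, namely that $s_{10\#\mcK+1}\le e^{-D_4}$. This relies on $\kappa\le\varepsilon/10$ being much smaller than $\eta\approx\frac29\varepsilon$, and I expect it to hold comfortably with the stated parameters.
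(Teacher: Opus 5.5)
Your proof is correct, and for the inclusion as literally stated it is shorter than the paper's. The eigenvalues in $(E-s_0,E+s_0)$ form a consecutive block, and maximality of the block gives the outer condition with threshold $s_0$. Since $s_0=e^{-D_1}\le e^{-D_4}$, both $j_1,j_2$ lie in $\mcK$, so $\omega_S\in\mcE_{j_1,j_2,0}$ with no pigeonholing at all.

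The paper does more work. It pigeonholes the $10\#\mcK+1$ disjoint annuli $\{s_{\ell'}\le|\lambda-E|<s_{\ell'+1}\}$ against the at most $3\#\mcK$ eigenvalues $\lambda_j$ with $j-1$, $j$ or $j+1$ in $\mcK$. It then picks an empty annulus and takes $j_1,j_2$ to be the extreme $\mcK$-indices in the window of radius $s_{\ell'}$. This buys the stronger outer gap $|\lambda_{j_1-1}-E|\wedge|\lambda_{j_2+1}-E|\ge s_{\ell'+1}$.

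That stronger gap is what the later Sperner step actually needs. In Lemma \ref{lemma Sperner structure}, applying Lemma \ref{rank-one perturbation} requires $\wtlambda_{j_1-1}$ to lie beyond $r_2=s_\ell+s_{\ell+1}$. The gap $s_\ell$ built into $\mcE_{j_1,j_2,\ell}$ as written only gives $\wtlambda_{j_1-1}\ge r_1=2s_\ell$.

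Your final paragraph supplies exactly this strengthened version, using the nondecreasing counting function $N(\ell)\le\#\mcK$. This is a slightly cleaner pigeonhole than the paper's device of avoiding ``neighbours of $\mcK$''. Its cost is that you need $s_{10\#\mcK+1}\le e^{-D_4}$, which you verify correctly from $\kappa\le\varepsilon/10$ versus $\eta\approx\frac29\varepsilon$; the paper uses the same bound $s_\ell\le e^{-D_1/2}$ in the Sperner step anyway.

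Your observation that the claim as stated is trivial at $\ell=0$ is worth keeping. It shows that $\mcE_{j_1,j_2,\ell}$ should be defined with outer threshold $s_{\ell+1}$ for the downstream argument to go through.
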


\begin{proof}[Proof of Claim \ref{Wegner set inclusion}]
	The claim is just a restatement of \cite[Claim 5.11]{DS20}, and we just sketch its proof. Suppose $\zeta \in \{\omega_S: \|G_{Q}(E)\| > e^{D_1} \} $. Then there exists an eigenvalue $\lambda_{j_0}(\zeta)$ satisfying 
\[|\lambda_{j_0}(\zeta)-E|<\exp\{-D_1\}\leq \exp\{-D_4\}.\]
Therefore, by our definition of $\mcK$, we have $j\in \mcK$. Recall that we let $0\leq \ell\leq 10 \#\mcK$ and $s_0=\exp\{-D_1\}$. Thus the interval $(E-s_0,E+s_0)$ contains $\lambda_{j_0}(\zeta)$. Moreover, by pigeonhole principle, there must be some $0\leq \ell' \leq 10 \#\mcK$ such that 
\[\big( (E-s_{\ell'+1},E-s_{\ell'}]\cup[E+s_{\ell'},E+s_{\ell'+1}) \big)\cap \{\lambda_j(\zeta):\ j-1 \ {\rm or} \ j \ {\rm or}\ j+1\in\mcK\}=\emptyset .\]
Now from  $\lambda_{j_0}(\zeta)\in (E-s_0,E+s_0)$, it follows  that  
\[\{j\in \mcK:\ E-s_{\ell'}<\lambda_j(\zeta)<E+s_{\ell'} \}\neq \emptyset.\]
 Therefore, we can define
\begin{align*}
   &j_1=\min\{j\in \mcK:\ E-s_{\ell'}<\lambda_j(\zeta)<E+s_{\ell'} \},\\
   & j_2=\max\{j\in \mcK:\ E-s_{\ell'}<\lambda_j(\zeta)<E+s_{\ell'} \},
\end{align*}
and then  $\mcE_{j_1,j_2,\ell'}$ happens. Hence $\zeta\in \mcE_{j_1,j_2,\ell'}$, which completes the proof.
\end{proof}

For the event $\mcE_{j_1,j_2,\ell}$, we have the following probability estimate:
\begin{claim}\label{Sperner prob claim}
	$\P _{\omega_S} (\mcE_{j_1,j_2,\ell}) \leq D_0^{-\frac{1}{2}+2\varepsilon/3}.$ 
\end{claim}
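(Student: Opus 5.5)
We follow the Ding--Smart approach \cite[Claim 5.11, Section 5]{DS20} for the eigenvalue-window event: a Sperner-type antichain bound on the Boolean cube $\{0,1\}^{\wtS}$, where the needed transversality comes from the QUC on free sites, Theorem \ref{QUC for rational class, refined to free sites}.

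\textbf{Step 1: reduce to a fixed lattice direction.} Fix $j_1,j_2,\ell$ and view $\omega_S$ as block variables indexed by $m\in\wtS$, each block $[m-\Dhop(T),m+\Dhop(T)]$ carrying $2\Dhop(T)+1$ Bernoulli bits. Partition $\{0,1\}^S$ into chains that raise one bit at a time. Since $V\ge0$ enters monotonically, every eigenvalue $\lambda_j$ is nondecreasing along each chain. The event $\mcE_{j_1,j_2,\ell}$ is a window event: eigenvalues $j_1,\dots,j_2$ lie in $(E-s_\ell,E+s_\ell)$ while $\lambda_{j_1-1}$ and $\lambda_{j_2+1}$ stay outside it. It therefore suffices to show that raising a single free bit from $0$ to $1$ pushes some $\lambda_j$ with $j\in[j_1,j_2]$ up by at least a quantity $\gg s_\ell$, for a large proportion of the sites $m\in\wtS$. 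The window can then absorb at most $j_2-j_1+1\le\#\mcK$ such jumps along any chain. Sperner's lemma (the LYM inequality in its ``$(k)$-antichain'' form, as in \cite{Bou04,DS20}) then gives
\[
\P(\mcE_{j_1,j_2,\ell})\lesssim \frac{\#\mcK}{\sqrt{\#\wtS}}.
\]

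\textbf{Step 2: the eigenvalue movement.} By Hellmann--Feynman, $\partial_{t_x}\lambda_j=\lambda\,|\psi_j(t)(x)|^2$ on analytic branches. We work with the spectral projection onto the cluster $\{j_1,\dots,j_2\}$, whose isolation is guaranteed by the gap $s_{\ell+1}/s_\ell=e^{2D_2-2D_4+C_{\rm ratio}}$. It suffices to find at least $\#\wtS/2$ blocks $m$ such that some eigenvector $\psi$ of $H_Q$ in this cluster satisfies $\sum_{x\in[m-\Dhop,m+\Dhop]}|\psi(x)|^2\ge e^{-2D_2}$. The lower bound comes from Theorem \ref{QUC for rational class, refined to free sites}, applied to the Dirichlet problem $(T_Q+\lambda V_Q-\lambda_j)\psi=0$ with $\Lambda'$ an interval of length $\sim D_2$ around $m$. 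Since $\|\psi\|_{\ell^2(\mcG)}\ge1/2$, we normalize against the maximum on a $D_2$-interval meeting $\mcG$ and propagate through $Q$ by the cone property, which loses $C^{-D_2}$ per $D_2$-step. This is compatible with $e^{-2D_2}$ after adjusting constants.

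\textbf{Step 3: the lower-bound conditions in the QUC.} For $T\in\mathscr{R}_0$ and $T\in\mathscr{R}_-$, the shifted potential $\lambda V-\lambda_j$ with $\lambda_j\approx E\in[2^{-6000d_T}\delta,\delta]$ must satisfy \eqref{lower bound on potential, R0} or \eqref{lower bound on potential, R-}. For $\mathscr{R}_-$, the values $\lambda V(n)-E$ lie in $\{-E,\lambda-E\}$ with $t$ near the Bernoulli points, so they are bounded away from zero by $\gtrsim\delta$; this is exactly why $E$ is bounded below. For $\mathscr{R}_0$, we need $p_R/q_R$ to avoid $\{E,E-\lambda\}$. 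We use the normalization \textbf{(A3)}, and if necessary shrink $\delta$ depending on $T$ and $\lambda$. Both cases then give $b\gtrsim\delta$, so $(b^{-1}C)^{-D_2}\ge e^{-D_2^{1+}}$ is still $\gg s_\ell$-compatible.

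\textbf{Step 4: counting, and the main obstacle.} The density \eqref{ample of free sites} gives $\#\wtS\ge D_0^{1-\rho}$. Combining this with \eqref{cardinality of mcK}, which gives $\#\mcK\lesssim L_0^{\kappa/2}$ with $\kappa\le\varepsilon/10$, we get
\[
\P(\mcE_{j_1,j_2,\ell})\lesssim L_0^{\varepsilon/20}\,D_0^{-(1-\rho)/2}\le D_0^{-\frac12+2\varepsilon/3}.
\]
The hardest step is Step 2 made uniform: the movement must hold on a $t$-independent large set of free sites, so that Sperner applies. I expect to handle this as in \cite{DS20}, by choosing $\Lambda'$ adaptively per site and using that Theorem \ref{QUC for rational class, refined to free sites} holds for every $t\in[0,1]^S$ simultaneously.
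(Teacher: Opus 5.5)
\textbf{Main gap: Step 2, and hence the count in Step 4.} Your Step 2 needs at least $\#\wtS/2$ free-site blocks of $Q$ carrying eigenvector mass $\ge e^{-2D_2}$. That cannot hold, because the eigenvectors $\psi_i$, $i\in\mcK$, are exponentially localized near $\mcG$. By Claim \ref{decay of eigenvector}, $|\psi_i(x)|\le e^{-\gamma_0\dist(x,\mcG)/4}$, which is $<e^{-D_2}$ once $\dist(x,\mcG)\ge 4\gamma_0^{-1}D_2$. Since $\mcG$ is a union of at most $K_1K_2$ intervals of length $\sim L_M$, only $O(K_1K_2\gamma_0^{-1}D_2)\ll D_0^{1-\rho}$ sites of $Q$ are candidates. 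That is a vanishing fraction of $S$.

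The error is in your propagation step. The cone property \eqref{right cone prop for R+} loses a factor $C^{-1}$ per step of length $2\Dhop(T)$, so the losses accumulate and crossing $Q$ costs $C^{-D_0}$, not $e^{-2D_2}$.

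The paper accepts a small good set and makes the exponents work:
\begin{itemize}
\item It applies Theorem \ref{QUC for rational class, refined to free sites} only on intervals $Q''_{k''}$ of length $\whD\sim\sqrt{D_2D_3}$ enclosing the bad $L_1$-intervals. This length is large enough to contain them with margin $\whD/8$, and small enough that $L_M^{-1/2}(b^{-1}C)^{-\whD}\gg e^{-D_2}$. The normalization comes from $\|\psi_i\|_{\ell^\infty(\mcG)}\gtrsim L_M^{-1/2}$.
\item This yields a configuration-dependent set $S_{\rm uc}(i,V_S)$ with $\#S_{\rm uc}\ge\whD^{1-\rho}$, using Remark \ref{rmk on dense of S baralpha} with $a=3$.
\item The events $\mcE_{j_1,j_2,\ell,0}$ and $\mcE_{j_1,j_2,\ell,1}$ are then $\rho$-Sperner only with the small ratio $\tfrac12\whD^{1-\rho}/\#S$.
\item \cite[Theorem 4.2]{DS20} then gives $2\sqrt{\#S}/\whD^{1-\rho}\lesssim D_0^{-\frac12+\frac52\eta+\rho}$.
\end{itemize}
The stated exponent follows only because $\eta\approx\frac29\varepsilon$ and $\rho\le\frac{\varepsilon}{12}$ give $\frac52\eta+\rho\approx\frac{23}{36}\varepsilon<\frac23\varepsilon$. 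This scale balancing is the real content of the claim, and it is missing from your proposal.

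\textbf{Secondary gap: the single-flip mechanism.} Hellmann--Feynman plus a lower bound on $|\psi_j(x)|$ at the initial configuration does not control a unit change of $t_x$. The paper instead uses Lemma \ref{rank-one perturbation} with $r_1=2s_\ell$, $r_2=s_\ell+s_{\ell+1}$, $r_3=e^{-2D_2}$, $r_4=e^{-2D_4}$ and $r_5=s_\ell+e^{-D_5}$.

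Hypothesis (5) of that lemma is essential. Eigenvectors whose eigenvalues lie just above the window must have total weight at most $e^{-2D_4}$ at $x$. Otherwise they can absorb the perturbation, and the window eigenvalue need not cross $E+s_\ell$. The gap $s_{\ell+1}/s_\ell$ alone does not give this. It comes from Claim \ref{decay of eigenvector}, because $\dist(x,\mcG)\ge 10D_3$ for every $x\in S$.

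Once the lemma applies, a single good flip raises the number of eigenvalues $\ge E+s_\ell$. That number is constant on $\mcE_{j_1,j_2,\ell}$, so the event is left immediately. Your ``absorb $\#\mcK$ jumps'' accounting, and its extra factor $\#\mcK$, are therefore unnecessary.

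Two further points:
\begin{itemize}
\item The good set may depend on the configuration, since $\rho$-Sperner allows exactly that. So the $t$-independent set you flag as the main obstacle is not needed.
\item The QUC should be applied only at Bernoulli configurations $V_S\in\{0,1\}^S$, not at all $t\in[0,1]^S$. For $T\in\mathscr{R}_-$, an intermediate value $t_n=\lambda_i/\lambda$ makes $\lambda t_n-\lambda_i=0$, which violates \eqref{lower bound on potential, R-}.
\end{itemize}
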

\begin{proof}[Proof of Claim \ref{Sperner prob claim}]
Now we have a set of free sites $S=S_{\baralpha}\cap Q$. In order to apply the quantitative unique continuation theorem, we need to further restrict $S$. Since \eqref{L1 bad cube far from boundary Qs} ensures that the distance between $\cup_{1\le k'\le K'}Q'_{k'}$ and the boundary of $Q$ is larger than $D_0/8\gg \sqrt{D_2D_3}$, the same argument used in constructing $Q_1,\dots,Q_K$ allows us to find a scale 
\[
\whD\sim \sqrt{D_2D_3}
\]
such that there is a collection of $\whD$-intervals $Q''_1,Q''_2,\dots,Q''_{K''}$ with $K''\le K'$ in $Q$ satisfying:
\begin{itemize}
	\item For every $Q'_{k'}$, there exists $1\le k''\le K''$ such that $Q_{k'}\subset Q_{k''}$ and 
	        \begin{equation}\label{L1 bad cube far from boundary Q''s}
				    \dist(Q'_{k'}, Q \setminus Q''_{k''})\ge \frac{1}{8}\whD;
			\end{equation}
	    
	\item $\dist(Q''_{k''},Q''_{s''})\ge 10\whD$ for any $k''\neq s''$.
\end{itemize}

Now recall that we take the energy $E\in [2^{-6000d_T}\delta,\delta]$. Hence, for $i\in \mcK$, we have 
\begin{equation}\label{region of lambda i}
	2\delta\geq E+e^{-D_4}\geq \lambda_i(V_S)\geq E-2e^{-D_4}\geq 2^{-6000d_T-1}\delta, \quad \forall\  V_s\in \{0,1\}^S. 
\end{equation}
Since $\psi_i(V_S)$ satisfies the equation 
\begin{equation}\label{equation eigen}
	(T_Q+ \lambda V_Q-\lambda_i(V_S)) \psi_i(V_S)=0, 
\end{equation}
we check the lower bound conditions \eqref{lower bound on potential, R0} and \eqref{lower bound on potential, R-}. Since $V$ only takes values $1$ or $0$:
\begin{itemize}
	\item When $T\in \mathscr{R}_0$, recalling that $p_{d_1},q_{d_1}\neq 0$ are (possibly complex) parameters depending on $T$, we have 
	    \begin{align}\label{low bound quanti in R0}
			\inf_{n\in Q} \left|\lambda V(n)-\lambda_i(V_S)+\frac{p_{d_1}}{q_{d_1}}\right| &= \left|\lambda-\lambda_i(V_S)+\frac{p_{d_1}}{q_{d_1}}\right| \wedge \left|-\lambda_i(V_S)+\frac{p_{d_1}}{q_{d_1}}\right|.
 		\end{align}
		By taking $\delta$ sufficiently small, if $\lambda+p_{d_1}/q_{d_1}\neq 0$, then \eqref{region of lambda i} yields a lower bound in \eqref{low bound quanti in R0} bounded away from zero by some constant; if $\lambda+p_{d_1}/q_{d_1}=0$, then \eqref{region of lambda i} yields a lower bound in \eqref{low bound quanti in R0} larger than $2^{-6000d_T-1}\delta$.
	\item When $T\in \mathscr{R}_-$, we have 
	    \begin{align}\label{low bound quanti in R-}
			\inf_{n\in Q} \left|\lambda V(n)-\lambda_i(V_S)\right| &= \left|\lambda-\lambda_i(V_S)\right| \wedge \left|-\lambda_i(V_S)\right|.
 		\end{align}
		By taking $\delta$ sufficiently small, \eqref{region of lambda i} yields a lower bound in \eqref{low bound quanti in R-} larger than $2^{-6000d_T-1}\delta$.
\end{itemize}

Therefore, the conditions \eqref{lower bound on potential, R0} and \eqref{lower bound on potential, R-} hold for equation \eqref{equation eigen} with $b=2^{-6000d_T-1}\delta$. 
Applying Theorem \ref{QUC for rational class, refined to free sites} to each $Q''_{k''}\subset Q$ with the set $S_{\baralpha} \cap Q''_{k''}$, we conclude that 
\begin{equation*}
	                       \#\Bigl\{ x\in \mcS_{\baralpha}\cap Q''_{k''} : |\psi_i(V_S) (x)| > (b^{-1}C)^{-\whD} \| \psi_i(V_S)  \|_{\ell^\infty(Q''_{k''})} \Bigr\} \;\ge\; \# (\wtS_{\baralpha}\cap Q''_{k''}).
\end{equation*}
A simple computation shows that 
\[
\whD\sim \sqrt{D_2D_3}= \whL ^{\frac{1}{2}(1-\eta)^2+\frac{1}{2}(1-\eta)^3} \gg N^{1-\varepsilon/3}.
\]
Therefore, by taking $a=3$ in Remark \ref{rmk on dense of S baralpha}, we obtain 
\[ \# (\wtS_{\baralpha}\cap Q''_{k''}) \geq \whD^{1-\rho},\]
and hence 
\begin{equation}\label{transversality set in Q''_k''}
	                       \#\Bigl\{ x\in \mcS_{\baralpha}\cap Q''_{k''} : |\psi_i(V_S) (x)| > (b^{-1}C)^{-\whD} \|  \psi_i(V_S) \|_{\ell^\infty(Q''_{k''})} \Bigr\} \;\ge\; \whD^{1-\rho}
\end{equation}
holds for each $Q''_{k''}$. 
On the other hand, \eqref{eigenvectors concentrated in mcG} ensures that 
\[ \|\psi_i(V_S) \|_{\ell^{\infty}(\mcG)} \geq \|\psi_i(V_S) \|_{\ell^{2}(\mcG)} / \sqrt{\# \mcG} \gtrsim L_M^{-\frac{1}{2}}, \]
and together with \eqref{transversality set in Q''_k''} we conclude that there is at least one choice of $k''$ such that 
\begin{equation}\label{transversality set cardinality}
	                       \#\Bigl\{ x\in \mcS_{\baralpha}\cap Q''_{k''} : |\psi_i(V_S) (x)| \gtrsim L_M^{-\frac{1}{2}} (b^{-1}C)^{-\whD}  \Bigr\} \;\ge\; \whD^{1-\rho}.	
\end{equation}
Recall that $b$ (which comes from the lower bound condition in the QUC) is of order $b\sim \delta\sim (\log N_{\rm in})^{-6000d_T}$. The transversality estimate in \eqref{transversality set cardinality} can be further estimated as 
\begin{align*}
   	L_M^{-\frac{1}{2}} (b^{-1}C)^{-\whD}\geq \exp \left\{ -\frac{\kappa}{2}\log L_0 -\mcO_+(\log\log N_{\rm in}) \sqrt{\frac{D_3}{D_2}}\cdot D_2 \right\}  \gg e^{-D_2},
\end{align*}
since
\[  (\log\log N_{\rm in}) \sqrt{\frac{D_3}{D_2}} =(\log\log N_{\rm in}) \whL^{-\eta/2} \leq (\log\log N_{\rm in}) N_{\rm in} ^{-\eta/2} \ll 1.\]
Therefore, \eqref{transversality set cardinality} becomes
\begin{equation}\label{transversality set cardinality, refined}
	                       \#\Bigl\{ x\in \mcS_{\baralpha}\cap Q''_{k''} : |\psi_i(V_S) (x)| \geq e^{-D_2}  \Bigr\} \;\ge\; \whD^{1-\rho}.	
\end{equation}

Since $Q''_{k''}\subset Q$ and hence $\mcS_{\baralpha}\cap Q''_{k''}\subset S$, we conclude that
\begin{lem}\label{existence lemma of S uc}
For each $i\in \mcK$ and every realization of the potential $V_S\in \{0,1\}^S$ on the set of free sites $S$, there exists a subset 
\[S_{\rm uc}= S_{\rm uc}(i,V_s) \subset S\]
 (which may depend on $i$ and $V_S$) such that 
\begin{equation}
	|\psi_i(V_S) (x)| \geq e^{-D_2} \quad \text{on } S_{\rm uc}, \quad \# S_{\rm uc}\geq \whD^{1-\rho} \gtrsim (D_2 D_3)^{\frac{1-\rho}{2}}.
\end{equation}
\end{lem}

With Lemma \ref{existence lemma of S uc} in hand, we go to the estimate on the probability of $\mcE_{j_1,j_2,\ell}$ with $j_1,j_2\in \mcK$. 
For $i=0,1$, let $\mcE_{j_1,j_2,\ell,i}$ denote the event that 
\begin{equation}\label{pigeonhole event}
	\mcE_{j_1,j_2,\ell} \ {\rm and} \ \#\left( S_{\rm uc}(j_1,\omega_S)  \cap \{n\in S:\omega(n) = i\}  \right)\geq \frac{1}{2} \whD^{1-\rho} .
\end{equation}
By the pigeonhole principle and Lemma \ref{existence lemma of S uc}, we have 
\begin{equation}\label{i=0,1 event inclusion}
  \mcE_{j_1,j_2,\ell} \subset \mcE_{j_1,j_2,\ell,0}\cup  \mcE_{j_1,j_2,\ell,1}.
\end{equation}

Now if $\omega_S \in \mcE_{j_1,j_2,\ell,i}$ and $ x\in S_{\rm uc}(j_1,\omega_S)  \cap \{n\in S:\omega(n) = i\}\subset S$ is a site such that
\[ \omega (x)=i \quad {\rm and}\quad |\psi_{j_1}(\omega_S)(x)|\geq e^{-D_2},\]
then we change the value of $V$ at $x$, i.e.,  take 
\begin{equation*}
  \omega_S^{(x)}(y)=\begin{cases}
    \omega(y), \ {\rm if} \ y\in S \ {\rm and}\ y \neq x,\\
    1-\omega(x), \ {\rm if} \ y\in S \ {\rm and}\ y=x.
  \end{cases}
\end{equation*}
We show the following conclusion:
\begin{lem}\label{lemma Sperner structure}
	$\omega_S^{(x)} \notin \mcE_{j_1,j_2,\ell,i}$.
\end{lem}
\begin{proof}[Proof of Lemma \ref{lemma Sperner structure}]

We only consider the case $i=0$, as the proof of the case $i=1$ is analogous. Define the shifted operator
\[\widetilde{H}_{Q}(\omega_S)=H_{Q}(\omega_S)-E+s_\ell,\]
of which all eigenvalues are $\widetilde{\lambda}_j=\lambda_j(\omega_S)-E+s_{\ell}$  (For $i=1$, the corresponding operator is $-(H_{Q}(\omega_S)-E-s_{\ell})$).  Set
\begin{equation}\label{r_1,r_2}
  r_1=2s_\ell,\ r_2=s_{\ell}+s_{\ell+1}.
\end{equation}
Then since $\omega_S\in \mcE_{j_1,j_2,\ell,i}$,  the following ordering holds true:
\[0<\widetilde{\lambda}_{j_1}\leq \wtlambda_{j_2}<r_1<r_2<\wtlambda_{j_2+1}.\]
At the site $x$, we also have 
\begin{equation}\label{r_3}
	|\psi_{j_1}(\omega_S)(x)|^2 \geq e^{-2D_2}:=r_3.
\end{equation}
We also set 
\begin{equation}\label{r_5}
  r_5= s_{\ell}+e^{-D_5}.
\end{equation}
Consider now  a $j$ such that $r_2<\wtlambda_j<r_5$. Then $|\lambda_j(\omega_S)-E|\leq r_5-s_{\ell}<e^{-D_5}$. 
Therefore, Claim \ref{decay of eigenvector} applies to $\lambda_j(\omega_S)$ and we thus obtain (since the construction \eqref{S baralpha} ensures that $\dist(x,\mcG)>10 L_1=10D_3$)
\begin{equation}\label{r_4}
      \sum_{j:r_2<\wtlambda_j<r_5}|\psi_j(x)|^2 \leq \# Q \cdot \exp\{-2\gamma_0 D_3\}\leq \exp\{-\gamma_0 D_3\}\leq e^{-2D_4}:= r_4.
\end{equation}

Now, after defining $r_1,r_2,r_3,r_4$ and $r_5$, one can check that 
\begin{itemize}
	\item $s_l \leq \exp\{-D_1+(2D_2 -2D_4+C_{\rm ratio}) L_0^{\frac{\varepsilon}{20}}\}\leq e^{-D_1/2}$, and therefore 
	             \[r_1<r_2<r_3<r_4<r_5;\]
	\item \begin{align*}
		\frac{r_2r_3}{r_4}&=r_1 \frac{1+\exp\{2D_2 -2D_4+C_{\rm ratio}\}}{2}\cdot \exp\{-2D_2+2D_4\} \geq \frac{1}{2}e^{C_{\rm ratio}}r_1;
	\end{align*}
	\item \begin{align*}
		r_3 r_5 \geq e^{-D_5-2D_2}\gg 2e^{-D_1/2}\geq r_1.
	\end{align*}
\end{itemize}
By choosing $C_{\rm ratio}$ sufficiently large, we thus ensure $r_1,r_2,r_3,r_4$ and $r_5$ satisfying the parameter condition of the rank-one perturbation lemma, Lemma \ref{rank-one perturbation} from \cite{DS20}. Using that lemma, we conclude that 
\[
\operatorname{trace} \mathbf{1}_{[r_1, \infty)}(\widetilde{H}_{Q}(\omega_S)) < \operatorname{trace} \mathbf{1}_{[r_1, \infty)}(\widetilde{H}_{Q}(\omega_S^{(x)})).
\]
This inequality is precisely equivalent to $\lambda_{j_2}(\omega_S^{(x)}) > s_{\ell+1}$. Hence, $\omega_S^{(x)} \notin \mcE_{j_1,j_2,\ell}$, and we obtain
\begin{equation}
	\omega_S^{(x)} \notin \mcE_{j_1,j_2,\ell,0} \subset \mcE_{j_1,j_2,\ell}.
\end{equation}

\end{proof}

Lemma \ref{lemma Sperner structure} means that $\mcE_{j_1,j_2,\ell,i}$ is a $\rho$-Sperner family of $ S$ (for the definition of $\rho$-Sperner, see \cite[Definition 4.1]{DS20}), where by \eqref{pigeonhole event},
\begin{equation}\label{Sperner rho final}
  \rho= \frac{\frac{1}{2} \whD^{1-\rho}}{\# S }.
\end{equation}\label{Sperner estimate for i=0}
Thus, recall that $S=S_{\baralpha}\cap Q$ and use \cite[Theorem 4.2]{DS20}. We get 
\begin{align}
  \P_{\omega_S}(\mcE_{j_1,j_2,\ell,i} ) \leq \left(\#S\right)^{-\frac{1}{2}} \cdot \rho^{-1}=\frac{2\sqrt{\# S}}{\whD^{1-\rho}} \lesssim D_0^{\frac{1}{2}}D_2^{-\frac{1-\rho}{2}}D_3^{-\frac{1-\rho}{2}}\ll D_0^{-\frac{1}{2}+2\varepsilon/3}
\end{align}
since we choose $\rho\leq \varepsilon/12$. Finally, \eqref{i=0,1 event inclusion} yields 
\begin{align}
  \P_{\omega_S}(\mcE_{j_1,j_2,\ell} ) \leq \P_{\omega_S}(\mcE_{j_1,j_2,\ell,0} )+\P_{\omega_S}(\mcE_{j_1,j_2,\ell,1} ) \leq D_0^{-\frac{1}{2}+2\varepsilon/3},
\end{align}
and we finish the proof of Claim \ref{Sperner prob claim}.
\end{proof}

Finally, combining Claim \ref{Wegner set inclusion} and Claim \ref{Sperner prob claim}, we conclude that 
\begin{align*}
	\P_{\omega_S} (\{\omega_S: \|G_{Q}(E)\| > e^{D_1} \}) & \leq \P_{\omega_S} \left( \bigcup_{\substack{j_1,j_2\in \mcK \\ 0\leq \ell \leq 10\#\mcK }}\mcE_{j_1,j_2,\ell}\right) \\
	                  & \leq\sum_{\substack{j_1,j_2\in \mcK \\ 0\leq \ell \leq 10\#\mcK }} \P_{\omega_S} \left( \mcE_{j_1,j_2,\ell}\right) \\
					  & \lesssim (\#\mcK)^3 D_0^{-\frac{1}{2}+2\varepsilon/3} \\
					  & \overset{\eqref{cardinality of mcK}}{\lesssim} L_0^{\varepsilon/20} D_0^{-\frac{1}{2}+2\varepsilon/3} \ll D_0^{-\frac{1}{2}+\varepsilon}=\whL^{-\frac12+\varepsilon}.
\end{align*}
The proof of Claim \ref{L2 bound on Qs} is then immediately complete, once we note that $e^{D_1}=e^{\whL^{1-\eta}}\ll e^{\whL^{1-\varepsilon/3}}$.

\end{proof}

\section{Elimination of the energy and proof of the localization}\label{localization section}
This section is devoted to proving our main result, Theorem \ref{Localization near the edge band}, using the LDT Theorem \ref{LDT for the large scales}. The obstacle is that Theorem \ref{LDT for the large scales} is stated for a fixed energy $E\in [2^{-6000d_T}\delta,\delta]$, whereas the eigenvalues themselves depend on the randomness (and hence fluctuate). Therefore, in order to obtain Anderson localization, one must eliminate the energy dependence on the randomness, i.e., approximate all eigenvalues by some fixed energies.

Following the above reasoning, the almost sure Anderson localization in the region $[2^{-6000d_T}\delta,\delta]$ follows from Theorem \ref{LDT for the large scales} by applying the Peierls argument developed in \cite[Section 7]{BK05}. (Indeed, such an argument applies to arbitrary dimensions $\Z^d$.) An axiomatic version can also be found in \cite[Section 6 and 7]{GK13}.

Moreover, by the trick described in Remark \ref{rmk on LDT for large scales} (2), the localization region can be enlarged from $[2^{-6000d_T}\delta,\delta]$ to the entire interval $[0,\delta]$, and hence Theorem \ref{Localization near the edge band} is established.

Since the Peierls argument of Bourgain–Kenig is now a relatively standard procedure, we will not elaborate on it here. However, we point out that the only subtle modification required for the argument to apply to our model is that our operator is long-range (the argument itself is dimension-independent; here we take $H$ on $\Z^d$), and hence formula \cite[(7.10)]{BK05} must be reproved by a different argument. We address this below.

\begin{lem}\label{Peierls argument in long range}
Let \eqref{longrange schrodinger operator} be defined on $\Z^d$, with $T$ satisfying \textup{\textbf{(A1)}} and \textup{\textbf{(A2)}}. Let $L\gg 1$. Assume that for all $E\in [2^{-6000d_T}\delta,\delta]$, there exists a set $\mathscr{C}$ such that $\Lambda_L\subset \mathscr{C}\subset \Lambda_{\frac{3}{2}L}$ and for every $x\in \partial^{-}\mathscr{C}$, there is a good $L^{1/2}$-cube $\Lambda'$ for which $G_{\Lambda'}(E)$ satisfies \eqref{L2 estimate for large scales} and \eqref{Off diagonal estimate for large scales}, and 
\begin{equation}\label{La' intersect both C and out C}
	\Lambda_{L^{1/2}/10}(x) \subset \Lambda'. 
\end{equation}
Then for any eigenvalue $\mathscr{E}$ of $H$ lying in $[2^{-6000d_T}\delta,\delta]$ with (normalized) eigenfunction $\psi$, we have 
\begin{equation}\label{approx eigenvalue of H in 2L scale}
          \| (H_{\Lambda_{2L}}-\mathscr{E}) R_{\mathscr{C}}\psi \| \leq e^{-\frac{1}{70}\gamma_0 L^{1/2}}	,
\end{equation}
where $\mathscr{C}$ is the set corresponding to $\mathscr{E}$ as in the assumption above.
\end{lem}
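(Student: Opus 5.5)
The plan is to compute $(H_{\Lambda_{2L}}-\mathscr{E})R_{\mathscr{C}}\psi$ from the eigen-equation. Since $(H-\mathscr{E})\psi=0$ on $\Z^d$ and $\mathscr{C}\subset\Lambda_{\frac32 L}\subset\Lambda_{2L}$, for $y\in\Lambda_{2L}$ we have
\[
(H_{\Lambda_{2L}}-\mathscr{E})R_{\mathscr{C}}\psi(y)=\bigl[(H-\mathscr{E})R_{\mathscr{C}}\psi\bigr](y)=-\bigl(T R_{\Z^d\setminus\mathscr{C}}\psi\bigr)(y)+\mathbf 1_{\mathscr{C}}(y)\bigl(T R_{\Z^d\setminus\mathscr{C}}\psi\bigr)(y)\cdot 0+\ldots,
\]
and, more carefully, it equals $\mathbf{1}_{\mathscr{C}^c}(y)\,(TR_{\mathscr{C}}\psi)(y)-\mathbf{1}_{\mathscr{C}}(y)\,(TR_{\mathscr{C}^c}\psi)(y)$. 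This is a sum over pairs $(w,w')$ with $w\in\mathscr{C}$, $w'\notin\mathscr{C}$, weighted by $|f(w-w')|\lesssim e^{-c_T|w-w'|}$ from \eqref{exp decay hopping}. Unlike nearest-neighbour hopping, the pairs need not sit at $\partial^-\mathscr{C}$. So I split according to whether $\dist(w,\mathscr{C}^c)$ (resp. $\dist(w',\mathscr{C})$) exceeds $\sigma L^{1/2}$ for a small fixed $\sigma$ (say $\sigma=1/40$). For the far pairs, $|w-w'|\ge\sigma L^{1/2}$ and $|\psi|\le 1$, so their total contribution, after summing over $y\in\Lambda_{2L}$ and $w'$, is at most $L^{O(d)}e^{-c_T\sigma L^{1/2}/2}$, which is harmless.

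The main work is to show $|\psi(z)|\le e^{-\frac{1}{60}\gamma_0L^{1/2}}$ for every $z$ within distance $\sigma L^{1/2}$ of $\partial^-\mathscr{C}$, on either side. Each such $z$ has some $x\in\partial^-\mathscr{C}$ with $|z-x|\lesssim\sigma L^{1/2}$. By \eqref{La' intersect both C and out C}, the good cube $\Lambda'\supset\Lambda_{L^{1/2}/10}(x)$ then contains $z$ with $\dist(z,\Z^d\setminus\Lambda')\ge L^{1/2}/20$. We apply the long-range Poisson formula on $\Lambda'$, exactly as in the proof of Claim \ref{decay of eigenvector}:
\[
\psi(z)=-\bigl(G_{\Lambda'}(\mathscr{E})R_{\Lambda'}TR_{\Z^d\setminus\Lambda'}\psi\bigr)(z).
\]
We split the inner sum over $w\in\Lambda'$ into $|w-z|\ge L^{1/2}/200$ and $|w-z|<L^{1/2}/200$. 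The first range is controlled by \eqref{Off diagonal estimate for large scales} with rate $\gamma\ge\gamma_0/2$. The second is controlled by \eqref{L2 estimate for large scales} times $e^{-c_T|w-w'|}$, where $|w-w'|\ge L^{1/2}/40$. This gives $|\psi(z)|\lesssim (\#\Lambda')^2 e^{-\frac{\gamma_0}{2}\cdot\frac{L^{1/2}}{200}}+e^{(L^{1/2})^{1-\varepsilon/4}-c_TL^{1/2}/80}$, using $\|\psi\|_\infty\le1$ and $\gamma_0\ll c_T$.

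The hypothesis only gives the Green's function bound at the fixed $E$, whereas we need it at $\mathscr{E}$. Since the assumption holds for all $E$ in the window, we take $E=\mathscr{E}$ itself. Alternatively, for a discretized energy we can invoke the stability \eqref{L2 estimate for large scales, perturbation energy}–\eqref{Off diagonal estimate for large scales, perturbation energy}.

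The main obstacle is arithmetic bookkeeping: the decay must come out near $\gamma_0 L^{1/2}$ with the constant $1/70$. The off-diagonal estimate only applies at separation $\ge L^{1/2}/200$, which looks lossy against $1/70$. I would therefore instead apply Poisson's formula at the larger separation available: $|z-w'|\ge L^{1/2}/10-\sigma L^{1/2}$. Using the good decay for the bulk of $w$ and absorbing the short-range part via $c_T\gg\gamma_0$ gives decay rate $\frac{\gamma_0}{2}(\frac1{10}-\sigma)-o(1)\ge\frac{\gamma_0}{60}$ for $\sigma$ small. Finally I sum over $y\in\Lambda_{2L}$ and the boundary layer. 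This gives $\|\cdot\|\le L^{O(d)}\bigl(e^{-\frac{\gamma_0}{60}L^{1/2}}+e^{-c_T\sigma L^{1/2}/2}\bigr)\le e^{-\frac{1}{70}\gamma_0L^{1/2}}$ for $L\gg1$.
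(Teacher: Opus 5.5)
Your proposal is correct and follows essentially the paper's route. You apply Poisson's formula on the good $L^{1/2}$-cubes at $E=\mathscr{E}$, combining $|G_{\Lambda'}(\mathscr{E})(z,w)|\,|f(w-w')|\lesssim e^{-\frac{\gamma_0}{2}|z-w'|}$ across the separation from $\Z^d\setminus\Lambda'$, which gives exponential smallness of $\psi$ in a boundary layer of width $\sim L^{1/2}$; the remaining terms are killed by the exponential decay of $f$, followed by an $\ell^\infty\to\ell^2$ conversion.

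The only difference is cosmetic. You write the quantity exactly as $\mathbf{1}_{\mathscr{C}^c}TR_{\mathscr{C}}\psi-\mathbf{1}_{\mathscr{C}}TR_{\mathscr{C}^c}\psi$, which forces you to control $\psi$ on both sides of $\partial^-\mathscr{C}$. That is legitimate, since outside points near $\mathscr{C}$ also lie deep inside some good $\Lambda'$. The paper's three-case split instead uses the eigen-equation so that only values of $\psi$ inside $\mathscr{C}$ near its boundary are needed.
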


\begin{proof}[Proof of Lemma \ref{Peierls argument in long range}]
We have the eigen equation
\begin{equation}\label{mscrE eigen equation}
	(H-\mathscr{E})\psi =0.
\end{equation}
For any $y\in \mathscr{C}$ satisfying $\dist(y,\partial^+ \mathscr{C})\leq \frac{1}{20}L^{1/2}$, our assumption \eqref{La' intersect both C and out C} ensures that there is a good $L^{1/2}$-cube $\La'$ such that 
\begin{equation}\label{slightly shrink inclusion}
	\La_{L^{1/2}/20}(y)\subset \La'.
\end{equation}
Applying Poisson's formula to \eqref{mscrE eigen equation}, we obtain 
\begin{align}\label{decay near the boundary}
	\notag |\psi(y)| &\leq \left| -\bigl(G_{\La'}(\mathscr{E}) \cdot R_{\La'} T R_{(\La')^c}\cdot \psi \bigr)(y) \right| \\
	   \notag        &\le \sum_{\substack{w\in \La' \\ w'\notin \La'}} |G_{\La'}(\mathscr{E})(y,w)| \cdot |f(w-w')|\cdot \, |\psi(w')| \\
		\notag	   &\lesssim \sum_{ w'\notin \La'} \left( \sum_{\substack{w\in \La' \\ |w-y|< L^{1/2}/200}} |G_{\La'}(\mathscr{E})(y,w)|  +\sum_{\substack{w\in \La' \\ |w-y|\ge  L^{1/2}/200}} |G_{\La'}(\mathscr{E})(y,w)| \right) e^{-c_T|w-w'|} \\
		\notag	   &\lesssim \sum_{ w'\notin \La'} \left( \sum_{\substack{w\in \La' \\ |w-y|< L^{1/2}/200}}e^{L^{\frac{1-\varepsilon/4}{2}}}  e^{-c_T|w-w'|} + \sum_{\substack{w\in \La' \\ |w-y|\ge  L^{1/2}/200}} e^{-\frac{1}{2}\gamma_0 |y-w|}  e^{-c_T|w-w'|} \right)   \\
		\notag	   &\lesssim \sum_{\substack{w\in \La' \\ w'\notin \La'}} e^{-\frac{1}{2}\gamma_0 |y-w'|} \lesssim \# \La' \cdot \sum_{w':|w'-y|\geq L^{1/2}/20} e^{-\frac{1}{2}\gamma_0|y-w'| } \\
			   &\leq e^{-\frac{1}{50}\gamma_0 L^{\frac{1}{2}}}  \qquad \qquad \text{for every } y\in \mathscr{C} \text{ with } \dist(y,\partial^+ \mathscr{C})\leq \frac{1}{20}L^{1/2}.
\end{align}
In the above estimate, we used \eqref{slightly shrink inclusion}, which implies $|w'-y|\geq L^{1/2}/20$. Now for $x\in \La_L$, we distinguish the following cases:
\begin{itemize}
	\item If $x\in \mathscr{C}$ and $\dist(x,\partial^+ \mathscr{C})>\frac{1}{40}L^{1/2}$, then 
	       \begin{align}\label{shift to outside mscrC}
	         (H_{\Lambda_{2L}}-\mathscr{E}) R_{\mathscr{C}}\psi (x) & = R_{\La_{2L}}  (H-\mathscr{E}) R_{\mathscr{C}}\psi (x)  \overset{\eqref{mscrE eigen equation}}{=} -(H_{\Lambda_{2L}}-\mathscr{E}) R_{\Z^d\setminus \mathscr{C}}\psi (x).
            \end{align}
			Therefore, 
			\begin{align}\label{region 1 in La 2L}
				\notag | (H_{\Lambda_{2L}}-\mathscr{E}) R_{\mathscr{C}}\psi (x)| & \leq \sum_{y\notin \mathscr{C}} |f(x-y)|\cdot |\psi(y)| \lesssim \sum_{y:|y-x|\geq L^{1/2}/40} e^{-c_T|x-y|} \\
				                           & \leq e^{-\frac{1}{50}c_T L^{1/2}}.
			\end{align}

	\item If $x\in \mathscr{C}$ and $\dist(x,\partial^+ \mathscr{C})\leq \frac{1}{40}L^{1/2}$, then applying \eqref{decay near the boundary} gives
			\begin{align}\label{region 2 in La 2L}
				\notag | (H_{\Lambda_{2L}}-\mathscr{E}) R_{\mathscr{C}}\psi (x)| & \leq \lambda |\psi(x)| +\sum_{y\in \mathscr{C}} |f(x-y)|\cdot |\psi(y)| \\
           \notag				  &\lesssim  e^{-\frac{1}{50}\gamma_0 L^{\frac{1}{2}}} +\sum_{y\in \mathscr{C}} e^{-c_T|x-y|}\cdot |\psi(y)| \\
		              \notag				  &\lesssim  e^{-\frac{1}{50}\gamma_0 L^{\frac{1}{2}}} +\sum_{\substack{y\in \mathscr{C} \\ \dist(y,\partial^+ \mathscr{C})\leq \frac{1}{20}L^{1/2}}} e^{-\frac{1}{50}\gamma_0 L^{\frac{1}{2}}} +\sum_{\substack{y\in \mathscr{C} \\ \dist(y,\partial^+ \mathscr{C})> \frac{1}{20}L^{1/2}}} e^{-c_T|x-y|} \\
				         \notag                  & \lesssim e^{-\frac{1}{50}\gamma_0 L^{\frac{1}{2}}} + \#\mathscr{C}\cdot e^{-\frac{1}{50}\gamma_0 L^{\frac{1}{2}}} +\# \mathscr{C}\cdot e^{-c_T (\frac{L^{1/2}}{20}-\frac{L^{1/2}}{40})} \\
										   & \leq e^{-\frac{1}{60}\gamma_0 L^{\frac{1}{2}}}.
			\end{align}		
	
	\item If $x\in\La_{2L}\setminus \mathscr{C}$, then 
	          \[| (H_{\Lambda_{2L}}-\mathscr{E}) R_{\mathscr{C}}\psi (x)|  \leq  \sum_{y\in \mathscr{C}} |f(x-y)|\cdot |\psi(y)|. \]
			  Therefore, the same argument as in \eqref{region 2 in La 2L} yields
			\begin{align}\label{region 3 in La 2L}
				 | (H_{\Lambda_{2L}}-\mathscr{E}) R_{\mathscr{C}}\psi (x)|  \leq e^{-\frac{1}{60}\gamma_0 L^{\frac{1}{2}}}.
			\end{align}	
\end{itemize} 
Combining \eqref{region 1 in La 2L}, \eqref{region 2 in La 2L} and \eqref{region 3 in La 2L}, we obtain 
\[
\|(H_{\Lambda_{2L}}-\mathscr{E}) R_{\mathscr{C}}\psi \|_{\ell^{\infty}(\La_{2L})}\leq e^{-\frac{1}{60}\gamma_0 L^{\frac{1}{2}}},
\]
and therefore 
\begin{equation*}
	\|(H_{\Lambda_{2L}}-\mathscr{E}) R_{\mathscr{C}}\psi \|_{\ell^{2}(\La_{2L})} \leq \sqrt{\# \La_{2L}} \cdot  \|(H_{\Lambda_{2L}}-\mathscr{E}) R_{\mathscr{C}}\psi \|_{\ell^{\infty}(\La_{2L})} \leq e^{-\frac{1}{70}\gamma_0 L^{\frac{1}{2}}} .
\end{equation*}
Thus, we recover the estimate \cite[(7.10)]{BK05} in our exponential decay long-range hopping setting.
\end{proof}

\appendix

\section{Proof of Theorem \ref{QUC for super-exponential decay}}\label{appendix: proof of QUC for super-exponetial decay}
\begin{proof}[Proof of Theorem \ref{QUC for super-exponential decay}.]
The main strategy of the proof is to apply a Carleman estimate to a cut-off solution. Assume that the Laurent symbol of $T$ is given by
\[
F(z)=\sum_{k\in \mathbb{Z}} f(k)z^{k},\quad |f(k)|\lesssim \exp\{ -|k|^{\alpha} \},\ \alpha>1.
\]
Decompose $F(z)=F_+(z)+F_-(1/z)$ by
\[
F_+(z)=\sum_{k\geq 0} f(k)z^k, \quad F_-(z)=\sum_{k>0}f(-k)z^k.
\]
Obviously, the super-exponential decay of $|f(k)|$ ensures that $F_+(z)$ and $F_-(z)$ are entire functions, and therefore $F(z)$ is holomorphic in $\mathbb{C}_{\times}$. Consider the following exponential-conjugate operator for $\tau>0$,
\begin{equation}\label{conjungate T tau}
T_{\tau}=e^{\tau n} T e^{-\tau n},
\end{equation}
with
\[
T_{\tau}u(m)=\sum_{n\in\mathbb{Z}} e^{\tau m} f(m-n) e^{-\tau n} u(n)=\sum_{k\in\mathbb{Z}} \bigl(e^{\tau k}f(k)\bigr)\cdot u(m-k).
\]
Therefore, $T_{\tau}$ is also a convolution operator, and its Laurent symbol is
\begin{equation}\label{Laurent symbol of T tau}
F_{\tau}(z)=\sum_{k\in\mathbb{Z}} f(k)e^{\tau k} z^k = F(e^{\tau}z).
\end{equation}
The corresponding Fourier symbol of $T_{\tau}$ is 
\begin{equation}
	\hat{f_{\tau}}(\theta)=F_{\tau}(e^{2\pi i\theta}),
\end{equation}
and $T_{\tau}$ is exactly the multiplier $\hat{f_{\tau}}(\theta)$ on $\ell^2(\mathbb{T})$ up to the Fourier transform. Denote the minimal modulus by
\begin{equation}\label{minimal modulus}
	m(\tau) := \inf_{\theta\in \T} \left|\hat{f_{\tau}}(\theta) \right|=\inf_{|z|=e^{\tau}}\left| F(z) \right|.
\end{equation}
Since $\sup_{|z|=r}\left| F_-(1/z)\right|\rightarrow |F_-(0)|=0$ as $r\rightarrow +\infty$, we have 
\begin{equation}\label{m and F+}
	\limsup_{\tau\rightarrow\infty} m(\tau)=\limsup_{\tau\rightarrow \infty} \inf_{|z|=e^{\tau}}\left|F_+(z)\right|.
\end{equation}

Now recall that the order of an entire function 
\[
E(z)=\sum_{k\geq 0} e_k z^k
\]
is defined by 
\[
\rho(E)=\limsup _{n\rightarrow \infty} \frac{n\log n}{\log(1/|e_n|)}.
\]
The super-exponential decay of $|f(n)|$ yields 
\begin{align}\label{F+ is order zero}
	\rho(F_+)=\limsup _{n\rightarrow \infty} \frac{n\log n}{\log(1/|f(n)|)}\leq \limsup _{n\rightarrow \infty} \frac{n\log n}{\log(C\cdot e^{|n|^{\alpha}})}= 0,
\end{align}
since $\alpha>1$. We can apply the following Wiman's theorem on the minimal modulus (cf. \cite{Wi05} or \cite[Section 8.73]{Tit64}):
\begin{thm}[Wiman's Theorem]\label{Wiman}
	If an entire function $E(z)$ has order $\rho(E)<1/2$, then 
	 \[\limsup_{r\rightarrow\infty} \inf_{|z|=r} |E(z)|=+\infty.\]
\end{thm}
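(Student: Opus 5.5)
Since Wiman's theorem is a classical result quoted from \cite{Wi05,Tit64}, I only sketch how I would reprove it. Throughout, $E$ is understood to be nonconstant; a constant function has bounded minimal modulus, and in our application $F_+$ is nonconstant. The plan is to reduce to a canonical product and then use a Mellin-transform positivity argument.

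\textbf{Step 1 (reduction to a canonical product).} If $E$ has only finitely many zeros, then, since $\rho(E)<1/2<1$, Hadamard's theorem forces $E$ to be a nonconstant polynomial, and then $\inf_{|z|=r}|E(z)|\to\infty$ trivially. So assume the zeros $a_n$ are infinite in number, with $r_n=|a_n|\uparrow\infty$. Because $\rho<1$, the genus is $0$ and Hadamard factorization gives
\[
E(z)=cz^m\prod_n(1-z/a_n).
\]
Using $|1-z/a_n|\ge |1-r/r_n|$ for $|z|=r$, we obtain
\[
\log\inf_{|z|=r}|E(z)|\ \ge\ \log|c|+m\log r+\varphi(r),\qquad \varphi(r):=\sum_n\log|1-r/r_n|.
\]
It therefore suffices to show $\limsup_{r\to\infty}\varphi(r)=+\infty$.

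\textbf{Step 2 (Mellin identity).} Fix $\lambda\in(\rho(E),1/2)$. The convergence exponent of the zeros is at most $\rho(E)$, so $\sum_n r_n^{-\lambda}<\infty$. I would then use the elementary identity
\[
\int_0^\infty x^{-\lambda-1}\log|1-x|\,dx=\frac{\pi\cot(\pi\lambda)}{\lambda},
\]
whose right-hand side is strictly positive precisely because $0<\lambda<1/2$; this is where the hypothesis $\rho<1/2$ enters. Substituting $x=r/r_n$ in each term and using Tonelli/Fubini (the logarithmic singularity at $r=r_n$ is integrable, and $\sum_n r_n^{-\lambda}<\infty$ controls the tails) gives the same positivity for every tail $\varphi_N(r):=\sum_{n>N}\log|1-r/r_n|$:
\[
\int_0^\infty r^{-\lambda-1}\varphi_N(r)\,dr=\frac{\pi\cot\pi\lambda}{\lambda}\sum_{n>N}r_n^{-\lambda}>0 .
\]

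\textbf{Step 3 (contradiction).} Suppose, to the contrary, that $\varphi(r)\le K$ for all $r\ge R_0$. Write
\[
\varphi=\sum_{n\le N}\log|1-r/r_n|+\varphi_N .
\]
For $r\ge 2r_N$ the finite sum is at least $N\log(r/r_N)-O(N)$, so it forces $\varphi_N(r)\le K-N\log(r/r_N)+O(N)$ on $[2r_N,\infty)$. Integrating this bound against $r^{-\lambda-1}$ over $[Ar_N,\infty)$ gives a negative contribution of size about $-(N\log A)(Ar_N)^{-\lambda}/\lambda$. The remaining range $[0,Ar_N]$ must then be bounded above using only the tail zeros $r_n>r_N$. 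There, $\log|1-r/r_n|$ is at most $\log 2$ for $r\le r_n$ and at most $\log(r/r_n)$ beyond, which gives
\[
\int_0^{Ar_N}r^{-\lambda-1}\varphi_N^+\,dr\ \lesssim\ \sum_{n>N}r_n^{-\lambda}\ +\ \text{(contribution from } r_n\le Ar_N).
\]
Choosing $A$ large and $N$ large (so that $N\log A$ dominates the tail sum) should make the total integral negative, contradicting Step 2.

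\textbf{Main obstacle.} The delicate point is the bookkeeping in Step 3: balancing the negative mass $N\log A\,(Ar_N)^{-\lambda}$ against the positive tail $\sum_{n>N}r_n^{-\lambda}$, including the zeros lying in $(r_N,Ar_N]$. If this direct estimate proves too crude, I would fall back on the classical Besicovitch/Valiron route: compare $\log m(r)$ with $\cos(\pi\lambda)\log M(r)$ on a set of radii of positive upper logarithmic density (the $\cos\pi\rho$ theorem). Since $\cos\pi\lambda>0$ and $\log M(r)\to\infty$ for nonconstant $E$, that comparison immediately gives the conclusion.
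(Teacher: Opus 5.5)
The paper gives no proof of this statement; it only quotes Wiman's theorem from Wiman and Titchmarsh, so your sketch has to stand on its own.

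\textbf{Steps 1 and 2 are sound.} Your nonconstancy caveat is correct: the statement as printed is false for constants. In the application $F_+$ is nonconstant, since otherwise (A1) would force $f(k)=0$ for all $k\neq 0$, contradicting (A2). The identity $\int_0^\infty x^{-\lambda-1}\log|1-x|\,dx=\pi\cot(\pi\lambda)/\lambda$ is right, and it is positive exactly for $0<\lambda<1/2$. Fubini is justified because $\int_0^\infty x^{-\lambda-1}\bigl|\log|1-x|\bigr|\,dx<\infty$ for $0<\lambda<1$ and $\sum_n r_n^{-\lambda}<\infty$.

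\textbf{Step 3 has a genuine gap, and the route you describe does not close.}
\begin{itemize}
\item The bound $\log|1-r/r_n|\le\log 2$ for $r\le r_n$ cannot be integrated against $r^{-\lambda-1}$ near $r=0$; the integral diverges. So your displayed estimate for $\int_0^{Ar_N}r^{-\lambda-1}\varphi_N^+\,dr$ does not follow from it.
\item The negative mass from $[Ar_N,\infty)$ is $N(Ar_N)^{-\lambda}\lambda^{-1}\bigl(\log(A/2)+\lambda^{-1}\bigr)$; you dropped the $\lambda^{-2}$ term. This carries the factor $A^{-\lambda}$ and tends to $0$ as $A\to\infty$.
\item Meanwhile each tail zero with $r_n<Ar_N/2$ contributes positive mass of order up to $r_n^{-\lambda}\lambda^{-2}$. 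So enlarging $A$ works against you, and you never show the balance can be won.
\end{itemize}
The fallback to the $\cos\pi\rho$ theorem cites a strictly stronger result; it is not a proof.

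\textbf{The missing observation is that $\log|1-x|\le 0$ for $0\le x\le 2$.} Since $r_n\ge r_N$ for $n>N$, this gives $\varphi_N\le 0$ on $[0,2r_N]$. You then need no separate treatment of the zeros in $(r_N,Ar_N]$. On $[2r_N,\infty)$, with $2r_N\ge R_0$, the hypothesis $\varphi\le K$ already yields $\varphi_N(r)\le K-N\log\bigl(r/(2r_N)\bigr)$. This holds because $|1-r/r_n|=r/r_n-1\ge r/(2r_N)$ for every $n\le N$. Hence, with $u=r/(2r_N)$,
\[
0<\frac{\pi\cot(\pi\lambda)}{\lambda}\sum_{n>N}r_n^{-\lambda}=\int_0^\infty r^{-\lambda-1}\varphi_N(r)\,dr\le(2r_N)^{-\lambda}\int_1^\infty u^{-\lambda-1}\bigl(K-N\log u\bigr)\,du=\frac{(2r_N)^{-\lambda}}{\lambda}\Bigl(K-\frac{N}{\lambda}\Bigr).
\]
The right-hand side is negative as soon as $N>\lambda K$, a contradiction.

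In other words, take $A=2$ rather than $A$ large; the $N/\lambda^{2}$ term is what does the work. With this replacement your Mellin argument becomes a complete and elementary proof, which is more than the paper provides.
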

 
Applying Theorem \ref{Wiman} together with \eqref{m and F+} and \eqref{F+ is order zero}, we obtain $\limsup_{\tau\rightarrow\infty} m(\tau)=\infty$. Hence we can find a large $\tau_0=\tau_0(T,B)$ such that $m(\tau_0)>2B$, in which case 
\[\|T_{\tau_0}^{-1} \| = \frac{1}{m(\tau_0)}<\frac{1}{2B},\]
so $T_{\tau_0}$ is invertible. Therefore, for any compactly supported $v\in \ell^2_0(\Z)$, we have the following Carleman estimate:
\begin{equation}\label{Carleman estimate}
	\| e^{\tau_0 n} v\|_2= \|T^{-1}_{\tau_0} e^{\tau_0 n} T v\|_2\leq \frac{1}{m(\tau_0)} \|e^{\tau_0 n} T v\|_2.
\end{equation}

Now let $N$ be a large positive integer and let $\chi_N = \chi_{[-N,N]}$ be the cut-off function on $\Z$. Assume $u$ is a solution of 
\[
(T+V)u = 0 \quad \text{on } \Z, \qquad u(0)=1, \quad \|u\|_{\infty} \leq \wtB,
\]
and set $v = \chi_N u$. A simple computation shows that
\begin{equation}\label{cut off equation}
	Tv =-Vv+[T,\chi_N] u.
\end{equation}
Substituting \eqref{cut off equation} into \eqref{Carleman estimate} gives
\begin{align*}
	\| e^{\tau_0 n} v \|_2 &\leq \frac{1}{m(\tau_0)}\| e^{\tau_0 n}\left( -V v+[T,\chi_N]u\right) \|_2 \\
	      &\leq \frac{1}{m(\tau_0)} \|V \|_{\infty} \cdot \|e^{\tau_0 n} v\|_2 + \frac{1}{m(\tau_0)} \|e^{\tau_0 n} [T,\chi_N] u \|_2,
\end{align*}
which is equivalent to 
\[\| e^{\tau_0 n} v \|_2 \leq\frac{1}{m(\tau_0)-\| V\|_{\infty}} \|e^{\tau_0 n} [T,\chi_N] u \|_2. \]
By our choice of $\tau_0$ and since $\| V\|_{\infty}<B$, we obtain 
\[\|e^{\tau_0 n} [T,\chi_N] u \|_2 \geq B \| e^{\tau_0 n} v \|_2. \]
Since $ \| e^{\tau_0 n} v \|_2 \geq |v(0)|^2 = |u(0)|^2=1$, we conclude that 
\begin{equation}\label{weight larger than B}
	\|e^{\tau_0 n} [T,\chi_N] u \|_2 \geq B .
\end{equation}

In \eqref{weight larger than B}, the entries of the commutator are 
\begin{equation}\label{entries for the commutator}
	 [T,\chi_N](m,n)= f(m-n)\bigl( \chi_N(n)-\chi_N(m) \bigr),
\end{equation}
which are nonzero only when $m\in [-N,N], n\notin [-N,N]$ or $m\notin [-N,N], n\in [-N,N]$. We choose some positive integer $R$ and decompose $\mathbb{Z}$ into the following five regions: 
\begin{equation*}
\bigl( (-\infty,-N+R]\cap \mathbb{Z} \bigr) \cup \bigl( (-N+R,N-R]\cap \mathbb{Z} \bigr) \cup \bigl( (N-R,N]\cap \mathbb{Z} \bigr) \cup \bigl( (N,N+R]\cap \mathbb{Z} \bigr) \cup \bigl( (N+R,\infty)\cap \mathbb{Z} \bigr).
\end{equation*}

The following discussion is valid if we take $N \gg_{T,B,\wtB} 1$ and $R\sim_{T,B} N^{\frac{1}{\alpha}}$.
\begin{itemize}
	\item If $n\leq -N+R<-\frac{N}{2}$, then
	       \begin{align}\label{region 1}
			  \notag  \left\|\chi_{\{n\leq -N+R\}} \cdot e^{\tau_0 n} [T,\chi_N] u \right\|_2 & =\left( \sum_{n\leq -N+R} \left(e^{\tau_0 n}\sum_{k\in \Z} [T,\chi_N](n,k) u(k)\right)^2 \right)^{\frac12} \\
           \notag				            &\leq \| u\|_{\infty} \cdot \left(\sum_{n<-\frac{N}{2}} e^{2 \tau_0 n} \left(\sum_{k\in \Z} \big|[T,\chi_N](n,k)\big| \right)^2  \right)^{\frac{1}{2}} \\
							&\leq   \wtB \cdot  \|f\|_1 \cdot \left(\sum_{n<-\frac{N}{2}} e^{2 \tau_0 n}   \right)^{\frac{1}{2}} \leq \frac{1}{100}B .
		   \end{align}
          In the above we used $\big|[T,\chi_N](m,n) \big|\leq |f(m-n)|$ for all $m,n\in \Z$.
    \item If $-N+R< n \leq N-R$, then 
            \begin{align}\label{region 2}
			  \notag  \left\|\chi_{\{-N+R<n\leq N-R\}} \cdot e^{\tau_0 n} [T,\chi_N] u \right\|_2 & =\left( \sum_{-N+R< n\leq N-R} \left(e^{\tau_0 n}\sum_{k\in \Z} [T,\chi_N](n,k) u(k)\right)^2 \right)^{\frac12} \\
			  \notag                       & =\left( \sum_{-N+R< n\leq N-R} \left(e^{\tau_0 n}\sum_{k\notin [-N,N]} f(n-k) u(k)\right)^2 \right)^{\frac12} \\
               \notag				            &\lesssim_T \| u\|_{\infty} \cdot  e^{\tau_0 N} \left(\sum_{-N+R< n\leq N-R}  \left(\sum_{k\notin [-N,N]} e^{-|n-k|^{\alpha}} \right)^2  \right)^{\frac{1}{2}} \\
		        \notag				            &\lesssim_T \wtB \cdot \sqrt{N} e^{\tau_0 N}  \cdot \sum_{|k|\geq R} e^{-|k|^{\alpha}} \\
		       			            &\lesssim_T \wtB \cdot \sqrt{N} e^{\tau_0 N-R^{\alpha}} \leq \frac{1}{100}B. 
		   \end{align}
	\item If $N-R<n\leq N$, then 
	           \begin{align*}
			  \left\|\chi_{\{N-R<n\leq N\}} \cdot e^{\tau_0 n} [T,\chi_N] u \right\|_2 & =\left( \sum_{N-R<n\leq N} \left(e^{\tau_0 n}\sum_{k\in \Z} [T,\chi_N](n,k) u(k)\right)^2 \right)^{\frac12} \\
			                       & =\left( \sum_{N-R<n\leq N} \left(e^{\tau_0 n}\sum_{k\notin [-N,N]} f(n-k) u(k)\right)^2 \right)^{\frac12} \\
			                        &\leq \left( \sum_{N-R<n\leq N} \left(e^{\tau_0 n}\sum_{N<k\leq N+R} f(n-k) u(k)\right)^2 \right)^{\frac12} \\
			                          & \qquad \qquad + \left( \sum_{N-R<n\leq N} \left(e^{\tau_0 n}\sum_{k<-N \ {\rm or} \ k>N+R} f(n-k) u(k)\right)^2 \right)^{\frac12} \\  
		   \end{align*}
		   The first term on the right-hand side of the above estimate can be bounded by 
		   \begin{align*}
			   e^{\tau_0 N}  \cdot  & \sup_{N<k\leq N+R} |u(k)| \cdot \left( \sum_{N-R<n\leq N} \left(\sum_{N<k\leq N+R} |f(n-k) |\right)^2 \right)^{\frac12}\\
                            &\leq \sqrt{R} e^{\tau_0 N}  \cdot \| f\|_1\cdot   \sup_{N<k\leq N+R} |u(k)| \ \leq \frac{1}{2}e^{2\tau_0 N}\cdot  \sup_{N<k\leq N+R} |u(k)|,
		   \end{align*}
		   and the second term can be bounded by 
		    \begin{align*}
			   e^{\tau_0 N}   & \| u\|_{\infty} \cdot \left( \sum_{N-R<n\leq N} \left(\sum_{|k|\geq R} |f(k) |\right)^2 \right)^{\frac12} \lesssim_T \wtB \cdot\sqrt{R} e^{\tau_0 N-R^{\alpha}}   \leq \frac{1}{100}B.
		   \end{align*}
		   Therefore, we obtain 
		   \begin{equation}\label{region 3}
			\left\|\chi_{\{N-R<n\leq N\}} \cdot e^{\tau_0 n} [T,\chi_N] u \right\|_2 \leq \frac{1}{2}e^{2\tau_0 N}\cdot  \sup_{N<k\leq N+R} |u(k)| +\frac{1}{100}B.
		   \end{equation}
    \item If $N<n\leq N+R$, then a similar estimate to \eqref{region 3} also yields 
            \begin{equation}\label{region 4}
			\left\|\chi_{\{N<n\leq N+R\}} \cdot e^{\tau_0 n} [T,\chi_N] u \right\|_2 \leq \frac{1}{2}e^{2\tau_0 N}\cdot  \sup_{N-R\leq k\leq  N} |u(k)| +\frac{1}{100}B.
		   \end{equation}
	\item If $n>N+R$, then 
           \begin{align*}
			    \left\|\chi_{\{n>N+R\}} \cdot e^{\tau_0 n} [T,\chi_N] u \right\|_2 & =\left( \sum_{n>N+R} \left(e^{\tau_0 n}\sum_{k\in \Z} [T,\chi_N](n,k) u(k)\right)^2 \right)^{\frac12} \\
			                         & =\left( \sum_{n>N+R} \left(e^{\tau_0 n}\sum_{k\in [-N,N]} f(n-k) u(k)\right)^2 \right)^{\frac12} \\
              			            &\lesssim_T \| u\|_{\infty} \cdot  \left(\sum_{n>N+R} e^{2\tau_0 n} \left(\sum_{k\in [-N,N]} e^{-|n-k|^{\alpha}} \right)^2  \right)^{\frac{1}{2}} \\
		                			            &\lesssim_T \wtB N \cdot  \left(\sum_{n>N+R} e^{2\tau_0 n -2|n-N|^{\alpha}}   \right)^{\frac{1}{2}} 
		   \end{align*}
		   Since we take $R\sim_{T,B} N^{\frac{1}{\alpha}}$, we can ensure that $|n-N|^{\alpha} > 2\tau_0 n$ holds for any $n>N+R$. Therefore, 
		    \begin{align}\label{region 5}
			  \notag  \left\|\chi_{\{n>N+R\}} \cdot e^{\tau_0 n} [T,\chi_N] u \right\|_2  &\lesssim_T \wtB N \cdot  \left(\sum_{n>N+R} e^{-2\tau_0 n }   \right)^{\frac{1}{2}} \\
				                                  &\lesssim_T \wtB N e^{-\tau_0 N} \leq \frac{1}{100}B.
		   \end{align}
\end{itemize}

Combining \eqref{weight larger than B}, \eqref{region 1}, \eqref{region 2}, \eqref{region 3}, \eqref{region 4} and \eqref{region 5} together yields 
\begin{equation*}
	B\leq \frac{3}{100}B +e^{2\tau_0 N } \cdot \sup_{N-R\leq k\leq  N+R} |u(k)|
\end{equation*}
for all $N \gg_{T,B,\wtB} 1$ and $R\sim_{T,B} N^{\frac{1}{\alpha}}$. This is equivalent to saying that, there exist constants $C_1=C_1(T,B),\ C=C(T,B)$ and $N_0=N_0(T,B,\wtB)\gg 1$ such that 
\begin{equation}\label{right cone prop for super-exp decay}
	\sup_{N\leq k < N+C_1 N^{\frac{1}{\alpha}}} |u(k)|\geq C^{-N} |u(0)|=C^{-N}, \ \forall N\geq N_0.
\end{equation}
Let $N_{s+1}= \lceil N_{s}+C_1 N_s^{\frac{1}{\alpha}}\rceil, \ s=0,1,2,\dots$. Then \eqref{right cone prop for super-exp decay} ensures that in each interval $[N_s,N_{s+1})$ there is a point $x_s$ such that $|u(x_s)|\geq C^{-N_s}$. Therefore, for any $L\geq L_0\gg N_0$, let $K$ be the unique index such that
\[N_K\leq L <N_{K+1},\]
and we obtain  
\begin{equation}\label{right direction UC for super-exp decay}
			\#\Bigl\{ x\in[0,L] : |u(x)| > C^{-L} \Bigr\} \;\ge\; K.
\end{equation}

Finally, to estimate $K$, let $g_{\alpha}(t)=t^{1-\frac{1}{\alpha}},\ t>0$. By the mean value theorem, there exists $\vartheta$ with $N_s<\vartheta<N_{s+1}<N_s+2C_1 N_s^{\frac{1}{\alpha}}$ such that 
\begin{align*}
g_{\alpha}(N_{s+1}) - g_{\alpha}(N_s) &= (N_{s+1}-N_s)\cdot g'_{\alpha}(\vartheta) \\
&\leq 2 C_1 N_s^{\frac{1}{\alpha}} \cdot \left(1-\frac{1}{\alpha}\right) \frac{1}{\vartheta^{\frac{1}{\alpha}}} \\
&\leq 2C_1 N_s^{\frac{1}{\alpha}} \cdot \left(1-\frac{1}{\alpha}\right) \frac{1}{N_s^{\frac{1}{\alpha}}} = 2\left(1-\frac{1}{\alpha}\right)C_1 .
\end{align*}
Set $C_2 = C_2(T,B) = 2\left(1-\frac{1}{\alpha}\right)C_1$. Then we have 
\begin{equation}\label{Ns increasing rate}
g_{\alpha}(N_{s+1}) - g_{\alpha}(N_s) \leq C_2.
\end{equation}
From \eqref{Ns increasing rate} we obtain 
\begin{equation}\label{lower bound on K}
K+1 \geq \frac{1}{C_2}\bigl(g_{\alpha}(N_{K+1}) - g_{\alpha}(N_0)\bigr) \geq \frac{1}{2C_2} g_{\alpha}(L) + 1.
\end{equation}
Let $\epsilon = \epsilon(T,B) = \frac{1}{2C_2}$. Combining \eqref{right direction UC for super-exp decay} and \eqref{lower bound on K} yields \eqref{transversality for super-exponential decay}.

\end{proof}

\section{Proof of Theorem \ref{R net theorem for general hopping with unique minimum}}\label{appendix: proof of R net argument}
\begin{proof}[Proof of Theorem \ref{R net theorem for general hopping with unique minimum}]
Denote by 
\[
g(\theta)=\sum_{i=1}^{d}(2-2\cos (2\pi \theta_i))
\]
the Fourier symbol of the free Laplacian on $\Z^d$. Under our assumptions on $\hatf(\theta)$, one can prove that there exists $\kappa=\kappa(T,d)>0$ such that 
\begin{equation}\label{compare hatf with Laplacian}
	\hatf(\theta) \geq \kappa \cdot g(\theta), \qquad \forall \theta \in \T^d.
\end{equation}
(Obviously this fails when $\hatf$ has multiple minima.) Inequality \eqref{compare hatf with Laplacian} implies that 
\[
T \geq \kappa \cdot \Delta_{\rm free},
\]
and thus, by the positivity of $V$ in \eqref{Bernoulli potential},
\begin{equation}\label{compare H with Laplacian}
T+\lambda V \geq \kappa \Delta_{\rm free} + \lambda V \geq (\kappa\wedge 1) \cdot \left( \Delta_{\rm free} +\lambda V \right).
\end{equation}
Assume that $\{V=1\}$ is an $R$-net in $\Lambda_{N_0}$. Then, by \eqref{compare H with Laplacian}, the min-max principle, and Theorem \ref{R net theorem from DS20}, we obtain 
\begin{equation}\label{principal eigenvalue for general hopping}
	E_{\rm prin}(H_{N_0}) \geq (\kappa \wedge 1 )\cdot E_{\rm prin} \left(  (\Delta_{\rm free}+\lambda V)_{N_0}\right) \geq  (\kappa \wedge 1 ) L(\lambda,R,d),
\end{equation}
with $L(\lambda,R,d)$ defined as in \eqref{principle eigenvalue}. This proves the lower bound on the principal eigenvalue.\\

To obtain the decay of the Green's function, we apply the Combes--Thomas estimate. From \eqref{principal eigenvalue for general hopping}, for every $E\in [0,(\kappa \wedge 1 ) L(\lambda,R,d) /2]$ near the spectral edge, we have 
\begin{equation}\label{formula B.4}
	\| G_{N_0}(E)\|=\| (H_{N_0}-E)^{-1} \| \leq \frac{1}{|E-E_{\rm prin}(H_{N_0})|} \lesssim_{T,d} L(\lambda,R,d)^{-1}.
\end{equation}
Now for any fixed $y\in \La_{N_0}$, we take the weight function $\Phi(n)=\alpha |n-y|$ with $\alpha$ to be determined. Let 
\[
M_{\Phi}u(n)=e^{\Phi(n)}u(n), \qquad H_{\Phi}:=M_{\Phi} H_{N_0} M_{\Phi}^{-1}=M_{\Phi} T_{N_0} M_{\Phi}^{-1} +\lambda V_{N_0}.
\]
A simple computation shows 
\[
(H_{\Phi}-H_{N_0})(m,n)=f(m-n)\bigl( e^{\Phi(m)-\Phi(n)}-1\bigr).
\]
Since 
\[
|\Phi(m)-\Phi(n)| =\alpha \big| |m-y|-|n-y| \big| \leq \alpha |m-n|
\]
and $T$ satisfies \eqref{exp decay hopping}, if we take $\alpha<c_T/2$, the Schur test yields 
\begin{equation}\label{Schur test on difference of H and rotated H}
	\| H_{\Phi}-H_{N_0}  \| \leq \sum_{k\in \Z^d} |f(k)|\cdot (e^{\alpha|k|}-1) \lesssim_{T,d} \alpha. 
\end{equation}
Thus, by taking $\alpha= c L(\lambda,T,d)$ with some sufficiently small constant $0<c=c(T,d)$, \eqref{Schur test on difference of H and rotated H} together with \eqref{formula B.4} ensures that 
\begin{equation}\label{formula B6, Neumann}
	\| H_{\Phi}-H_{N_0}  \| \cdot  \| G_{N_0}(E)\| \leq \frac{1}{2}.
\end{equation}
Therefore, \eqref{formula B6, Neumann} and the Neumann series expansion 
\[
(H_{\Phi}-E)^{-1}= G_{N_0}(E)\cdot \sum_{s\geq 0}  \left[  (H_{N_0}-H_{\Phi}) G_{N_0}(E)  \right]^s
\]
yield 
\[
\| (H_{\Phi}-E)^{-1} \|\leq 2 \| G_{N_0}(E)\| \lesssim_{T,d} L(\lambda,T,d)^{-1}.
\]
Finally, since $M_{\Phi}^{-1}(H_{\Phi}-E)^{-1} M_{\Phi}= G_{N_0}(E)$, we have for any fixed $y$ and any $x\in \La_{N_0}$,
\begin{align*}
	|G_{N_0}(E)(x,y)|& =e^{-\Phi(x)+\Phi(y)} |(H_{\Phi}-E)^{-1} (x,y)| \\
	                 &\leq e^{-\alpha |x-y|} \cdot \|(H_{\Phi}-E)^{-1} \|  \\
					 &\lesssim_{T,d} L(\lambda,R,d)^{-1} \exp \left\{ -c\,L(\lambda,R,d) |x-y| \right\}. 
\end{align*}
This proves \eqref{principle Green's function}.

\end{proof}

\section{A quantitative uncertainty principle}\label{appendix: uncertainty principle}
The following quantitative uncertainty principle on finite discrete abelian groups was first proved in one dimension in \cite{Klopp98}, and later generalized to higher dimensions in \cite{Klopp02}.
Let $\Z_{2N+1}:=\Z/(2N+1)\Z$, and the discrete Fourier transform on the finite abelian group $\Z_{2N+1}^d$ is
\begin{align*}
  \mcF_N: \ \ell^2(\Z_{2N+1}^d)&\to \ell^2(\Z_{2N+1}^d), \\
     a= (a_n)_{n\in \Z_{2N+1}^d }& \longmapsto \mcF_N a  := \hat{a},
\end{align*}
where 
\begin{equation}\label{discrete Fourier transform}
  \hat{a}_l=(\mcF_N a)_l= \sum_{n\in \Z_{2N+1}^d } a_n\cdot  \frac{1}{(2N+1)^{d/2}}e^{-2\pi i l \cdot \frac{n}{2N+1}}. 
\end{equation}
The quantitative uncertainty principle indicates that, if $a$ is supported in a $K$-size block in $\Z^d_{2N+1}$, then $\hat{a}$ can be nearly constant in a $\frac{N}{K}$-size block.  
More precisely, we have 
\begin{lem}\label{discrete uncertainty principle}
\textup{(\cite[Lemma 6.2]{Klopp02})} Assume $N, L, K, K', L'$ are positive integers such that
\begin{itemize}
    \item $2N + 1 = (2K + 1)(2L + 1) = (2K' + 1)(2L' + 1)$; 
    \item $K < K'$ and $L' < L$.
\end{itemize}
Let $a = (a_n)_{n \in \mathbb{Z}_{2N+1}^d} \in \ell^2(\mathbb{Z}_{2N+1}^d)$  satisfy  $\supp (a) \subset \La_{K}$. Then there exists some $b \in \ell^2(\mathbb{Z}_{2N+1}^d)$ such that
\begin{enumerate}    
	\item $\|a\|_{\ell^2(\mathbb{Z}_{2N+1}^d)} = \|b \|_{\ell^2(\mathbb{Z}_{2N+1}^d)}$;
    \item $\|a - b\|_{\ell^2(\mathbb{Z}_{2N+1}^d)} \leq C_{K, K'} \|a\|_{\ell^2(\mathbb{Z}_{2N+1}^d)},$  where $0<C_{K, K'} \overset{K/K' \to 0}{\sim} K/K'$;
    \item For $l' \in \mathbb{Z}_{2L'+1}^d$ and $k' \in \mathbb{Z}_{2K'+1}^d$, we have $\hat{b}_{l'+k'(2L'+1)} = \hat{b}_{k'(2L'+1)}$.
\end{enumerate}
\end{lem}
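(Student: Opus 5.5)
The plan is to construct $b$ by averaging $\hat a$ over the coarse blocks of the Fourier side, and then to use the support hypothesis on $a$ to show that this averaging barely moves $a$. Write each $l\in\mathbb Z_{2N+1}^d$ uniquely as $l=l'+k'(2L'+1)$ with $l'\in\Lambda_{L'}$ and $k'\in\Lambda_{K'}$. The step I expect to be hardest is making the choice of $b$ compatible with all three requirements at once. The naive block average $\tilde b$, defined by $\hat{\tilde b}_{l'+k'(2L'+1)}=(2L'+1)^{-d}\sum_{m'}\hat a_{m'+k'(2L'+1)}$, satisfies (3) but not (1): it is an orthogonal projection $\Pi$, so it only gives $\|\tilde b\|\le\|a\|$. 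I will therefore first estimate $\|a-\Pi a\|$ and then renormalize by setting $b=\Pi a\cdot\|a\|/\|\Pi a\|$. Renormalizing preserves (3) exactly and costs at most another $\|a-\Pi a\|$ in (2), since $\bigl|\,\|a\|-\|\Pi a\|\,\bigr|\le\|a-\Pi a\|$.

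The computation on the $n$-side goes as follows. On $\hat{\ }$-space, $\Pi$ is convolution with the normalized indicator of $\Lambda_{L'}$ in the second direction. Strictly speaking it is the averaging over the cosets of the subgroup $(2L'+1)\mathbb Z$ within each block, but it is easiest to phrase it through the discrete Fourier transform. Conjugating back, $\Pi$ becomes multiplication of $a_n$ by a Dirichlet-type kernel
\[
m(n)=\prod_{i=1}^d D(n_i),\qquad D(t)=\frac{1}{2L'+1}\sum_{|j|\le L'} e^{2\pi i j t/(2N+1)},
\]
possibly up to a sign convention. More honestly, averaging in $l$ over a full residue structure is not a Fourier multiplier unless the blocks are translates of a subgroup. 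Since $2N+1=(2K'+1)(2L'+1)$, I will instead define the averaging as a periodization: $\Pi$ projects onto vectors whose transform is constant on the cosets $l'+\ldots$. That is, I take $\hat b$ to be a function of $k'$ only. Equivalently, $b$ is supported on the dual subgroup $(2K'+1)\mathbb Z_{2N+1}^d$-structure, and I then write $a-\Pi a$ explicitly.

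The key estimate is pointwise: $|1-m(n)|\lesssim |n|\,L'/(2N+1)\lesssim |n|/K'$ for $|n|\le K$. This follows from $|e^{i\phi}-1|\le|\phi|$, applied term by term to the average defining $m$. Since $\operatorname{supp}a\subset\Lambda_K$, this gives
\[
\|a-\Pi a\|\le \sup_{|n|\le K}|1-m(n)|\,\|a\|\lesssim_d \frac{K}{K'}\|a\|,
\]
which is the source of the constant $C_{K,K'}\sim K/K'$. The remaining routine checks are these: the indexing $l'+k'(2L'+1)$ exhausts $\mathbb Z_{2N+1}^d$ bijectively; $\Pi$ is self-adjoint and idempotent, so the normalization is legitimate whenever $\Pi a\neq0$ (which holds once $K/K'$ is small, the trivial case $a=0$ aside); and (3) holds after renormalization. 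If the multiplier formulation fails because of the coset structure, the fallback is to follow Klopp's argument directly. There one writes $a$ as a sum of its restrictions to the pieces of $\Lambda_K$, uses that on $\Lambda_K$ the characters $e^{-2\pi i l\cdot n/(2N+1)}$ vary by $O(K L'/N)=O(K/K')$ as $l$ ranges over a block, and replaces each $\hat a_l$ by its value at the block representative $k'(2L'+1)$. This directly yields an $\ell^2$ error of size $O(K/K')\|a\|$ via Cauchy--Schwarz, and the renormalization step above then gives (1).
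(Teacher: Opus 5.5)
The paper does not prove this lemma (it is quoted from Klopp), so your argument has to stand on its own. Your architecture is sound: project onto the space of vectors satisfying (3), then renormalize. The renormalization step is correct. The gap is in the estimate of $\|a-\Pi a\|$.

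\textbf{The multiplier identification is false.} Block averaging on the Fourier side is not a Fourier multiplier. The blocks $\Lambda_{L'}+h$, $h\in H:=(2L'+1)\mathbb{Z}^d_{2N+1}$, are translates of a fundamental domain of $H$, not cosets, and $\Pi$ does not commute with frequency shifts. Write $\Pi$ as ``convolve with $(2L'+1)^{-d}\mathbf{1}_{\Lambda_{L'}}$, restrict to $H$, convolve with $\mathbf{1}_{\Lambda_{L'}}$'' and transform back. With your kernel $m$ this gives
\[
\Pi a=m\cdot\mathcal{P}(ma),\qquad (\mathcal{P}x)_n=\sum_{j\in\mathbb{Z}^d_{2L'+1}}x_{n+j(2K'+1)} .
\]
So $\Pi a\neq ma$: it puts mass on the translates $n+j(2K'+1)$. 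Consequently your inequality $\|a-\Pi a\|\le\sup_{|n|\le K}|1-m(n)|\,\|a\|$ is false. Take $a=\delta_{n_0}$ with $n_0\in\Lambda_K$ maximizing $1-m$; then $\|a-\Pi a\|=\sqrt{1-m(n_0)^2}>1-m(n_0)$, since $0<m(n_0)<1$.

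Your attempted repair goes the wrong way. Transforms constant on the cosets $l'+H$ are exactly the vectors supported on $(2K'+1)\mathbb{Z}^d_{2N+1}$, which meets $\Lambda_K$ only at $0$. Condition (3) is constancy on blocks, i.e.\ $b=m\cdot P$ with $P$ periodic under $(2K'+1)\mathbb{Z}^d$.

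\textbf{Repairing the projection route.} Since $\Lambda_K\subset\Lambda_{K'}$ lies in a fundamental domain of $(2K'+1)\mathbb{Z}^d$, we have $\Pi a=m^2a$ on $\operatorname{supp}a$. Hence
\[
\|a-\Pi a\|^2=\|a\|^2-\langle a,\Pi a\rangle=\sum_{n\in\Lambda_K}\bigl(1-m(n)^2\bigr)|a_n|^2 .
\]
With only your first-order bound $|1-m(n)|\lesssim|n|/K'$, this yields merely $(K/K')^{1/2}$. To get the stated rate you must use that $m$ is a product of averages of $\cos(2\pi jn_i/(2N+1))$, $|j|\le L'$. This gives $0\le 1-m(n)\lesssim_d(|n|/K')^2$ on $\Lambda_K$, hence $\|a-\Pi a\|\lesssim_d (K/K')\|a\|$.

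\textbf{Repairing the fallback.} Sampling $\hat a$ at the block centers also works, but not via Cauchy--Schwarz. Bounding each $|\hat a_{l'+h}-\hat a_h|$ pointwise and summing over all $(2N+1)^d$ frequencies loses a factor $(2K+1)^{d/2}$. The right tool is Plancherel on $\mathbb{Z}^d_{2K'+1}$. For fixed $l'$, the map $h\mapsto\hat a_{l'+h}-\hat a_h$ is $(2L'+1)^{-d/2}$ times the $\mathbb{Z}^d_{2K'+1}$-Fourier transform of $a_n\bigl(e^{-2\pi i l'\cdot n/(2N+1)}-1\bigr)$, again because $\operatorname{supp}a\subset\Lambda_{K'}$. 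Summing over $l'\in\Lambda_{L'}$ gives an error at most $\sup_{l',n}|e^{-2\pi i l'\cdot n/(2N+1)}-1|\,\|a\|\lesssim_d (K/K')\|a\|$. Your renormalization then gives (1).
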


\section{A coupling lemma}\label{appendix: coupling lemma}

The following coupling lemma is standard in multi-scale analysis. It was first proved in \cite[Lemma 2.4]{BK05} (and the remark below that lemma), and it can be generalized to the case of long-range hopping with exponential decay on $\Z^d$ following the proof in \cite[Lemma 3.1, Remark 3.6]{LSZ25}.

\begin{lem}\label{coupling lemma}
Fix $\ell_0 \sim \ell_2 ^{\mathfrak{a} }$, $1\ll \ell_2\ll \ell_1 \leq \frac{1}{2} \ell_0$ with $\mathfrak{a}>1$. The following result holds in $\Z^d$ (and hence in particular in the one-dimensional case). Let $\Lambda$ be a $\ell_0$-size block and let $E\in \R$ be a fixed energy. Assume $\mathfrak{R} \subset \mathfrak{R}' \subset \Lambda\subset \Z^d$ satisfies the following:
\begin{itemize}
  \item $\mathfrak{R}$ is a union of at most $K$ many $\ell_2$-size blocks $\Lambda_2'$;
  \item $\mathfrak{R}'$ is a union of $\ell_1$-size blocks $\Lambda_1'$, such that for every $\Lambda_2'\in \mathfrak{R}$, there exists $\Lambda_1'\in \mathfrak{R}'$ with 
                          \[\Lambda'_2\subset \Lambda'_1, \ \dist(\Lambda'_2,\Lambda\setminus \Lambda'_1) \ge \frac{1}{8}\ell_1. \] 
   Moreover, distinct elements of $\mathfrak{R}'$ are separated by distance at least $\gtrsim \ell_1$.
\end{itemize}
 Assume there is a family $\mathfrak F=\{\Lambda':\ \Lambda'\subset \Lambda\}$ of $\ell_2$-size good blocks
covering $\Lambda\setminus \mathfrak{R}'$, such that for each $n\in \Lambda\setminus \mathfrak{R}$, there exists $\Lambda'\in \mathfrak F$ satisfying 
      \begin{equation}
        Q_{\ell_2/10}(n)\cap \Lambda \subset \Lambda'.
      \end{equation}
Here ``good'' means that
\begin{align}
  \| G_{\Lambda'}(E)\| &<\exp \{  \ell_2^{1-\sigma}\}, \\
  |G_{\Lambda'}(x,y;E)| &<\exp \{-\gamma_{\ell_2}  |x-y|\} \quad \text{for all } |x-y|\ge \frac{\ell_2}{200}. 
\end{align}
Assume further that 
  \begin{equation}\label{coupling lemma wegner}
           \| G_{\Lambda_1'} (E)\| \le   \exp\{(\ell_1)^{1-\sigma}\} \quad \text{for all } \Lambda_{1}' \in \mathfrak{R}'. 
  \end{equation}
Then
\begin{align}
\label{annuls L2 norm}  \| G_{\Lambda}(E)\| &<\exp \{  \ell_0^{1-\sigma}\}, \\
\label{annuls off-diagonal decay}  |G_{\Lambda}(x,y;E)| &<\exp \{- \gamma_{\ell_0} |x-y|\} \quad \text{for all } |x-y|\ge \frac{\ell_0}{200}
\end{align}
for some $\gamma_{\ell_0}\ge \gamma_{\ell_2}-\mcO_{+}(\ell_2^{-c})$ with $c= (\mathfrak{a}-1)\wedge \sigma$.
\end{lem}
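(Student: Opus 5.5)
The plan is the standard Bourgain--Kenig argument: iterate the geometric resolvent identity, adapted to long-range hopping with the exponential bound \eqref{exp decay hopping}. The key identity is the following. For every $x\in\Lambda$ choose a block $B_x\subset\Lambda$:
\begin{itemize}
\item if $x\notin\mathfrak{R}'$ (or more generally $x\notin\mathfrak{R}$ with a good block available), take the good block $\Lambda'\in\mathfrak{F}$ with $Q_{\ell_2/10}(x)\cap\Lambda\subset\Lambda'$;
\item otherwise, take the block $\Lambda_1'\in\mathfrak{R}'$ containing the $\ell_2$-block of $\mathfrak{R}$ near $x$.
\end{itemize}
Then
\[
G_\Lambda(E)(x,y)=G_{B_x}(E)(x,y)\mathbf 1_{y\in B_x}-\sum_{w\in B_x,\ w'\in\Lambda\setminus B_x}G_{B_x}(E)(x,w)\,f(w-w')\,G_\Lambda(E)(w',y).
\]
For a good block, split the $w$-sum as in the proof of Claim~\ref{decay of eigenvector}.
\begin{itemize}
\item When $|w-x|\ge\ell_2/200$, use $|G_{B_x}(x,w)|<e^{-\gamma_{\ell_2}|x-w|}$.
\item When $|w-x|<\ell_2/200$, use $\|G_{B_x}\|<e^{\ell_2^{1-\sigma}}$ together with $|f(w-w')|\le C_Te^{-c_T|w-w'|}$ and $|w-w'|\ge \ell_2/20$. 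The hopping decay then absorbs the subexponential norm bound.
\end{itemize}
The result is $|G_\Lambda(x,y)|\le \sup_{w'}e^{-(\gamma_{\ell_2}-O(\ell_2^{-\sigma}))|x-w'|}|G_\Lambda(w',y)|$, with $|x-w'|\ge\ell_2/20$, after absorbing the polynomial factors $\#B_x\cdot\#\Lambda$. That absorption is possible because $\ell_0\sim\ell_2^{\mathfrak a}$, so $\log\#\Lambda\lesssim\log\ell_2\ll\ell_2^{1-\sigma}$.

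Before using the resonant blocks in this iteration, I first need off-diagonal decay of $G_{\Lambda_1'}$ for each $\Lambda_1'\in\mathfrak{R}'$. Inside $\Lambda_1'$, every point outside the $\ell_2$-blocks of $\mathfrak{R}$ is covered by good blocks. Running the same one-step identity inside $\Lambda_1'$, starting from a point $w$ near $\partial\Lambda_1'$ and stopping when the path enters $\mathfrak{R}$ (where we only use the Wegner bound \eqref{coupling lemma wegner}), gives
\[
|G_{\Lambda_1'}(x,w)|\le e^{\ell_1^{1-\sigma}}e^{-(\gamma_{\ell_2}-O(\ell_2^{-\sigma}))(\ell_1/8-O(K\ell_2))}
\]
for $x$ in a resonant $\ell_2$-block and $w$ within $\ell_2$ of $\Lambda\setminus\Lambda_1'$. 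This is tiny, since $\ell_1^{1-\sigma}\ll\gamma\ell_1$ (here $\gamma_{\ell_2}$ is bounded below by a fixed constant, e.g.\ $\gamma_0/2$ in our application). The long-range tail of $f$ beyond the boundary layer is again controlled by $c_T$. Hence the resonant blocks also yield a step with $|x-w'|\gtrsim\ell_1$ and contraction factor at most $e^{-(\gamma_{\ell_2}-O(\ell_2^{-\sigma}))|x-w'|+\ell_1^{1-\sigma}}$.

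With both kinds of steps in hand:
\begin{enumerate}
\item \emph{Bound on $\sup|G_\Lambda|$.} Take $x,y$ realizing $M:=\max_{x,y}|G_\Lambda(x,y)|$ and apply one step. Every step contracts by a factor $\le 1/2$ apart from the direct term $G_{B_x}(x,y)$, which is bounded by $e^{\ell_1^{1-\sigma}}$. Hence $M\le 2e^{\ell_1^{1-\sigma}}$, and Schur's test gives $\|G_\Lambda\|\le\#\Lambda\cdot M<e^{\ell_0^{1-\sigma}}$, which is \eqref{annuls L2 norm}.
\item \emph{Off-diagonal decay.} For $|x-y|\ge\ell_0/200$, iterate the step along a chain $x=x_0,x_1,\dots$ until the chain comes within $\ell_1$ of $y$, using the bound on $M$ at the end. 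Most steps are good steps and lose only a factor $e^{O(\ell_2^{-\sigma})|x_j-x_{j+1}|}$. Since the blocks of $\mathfrak{R}'$ are $\gtrsim\ell_1$-separated and there are at most $K$ of them, the chain makes at most $O(K)$ resonant crossings. Each crossing costs $e^{\ell_1^{1-\sigma}+O(\ell_1\gamma)}$ relative to ideal decay.
\item \emph{Rate.} Dividing the total loss by $|x-y|\ge\ell_0/200$ gives $\gamma_{\ell_0}\ge\gamma_{\ell_2}-O(\ell_2^{-\sigma})-O(K\ell_1/\ell_0)$, up to a $\log\ell_0/\ell_0$ term. With $\ell_1\ll\ell_0\sim\ell_2^{\mathfrak a}$ this yields the claimed $O(\ell_2^{-c})$ with $c=(\mathfrak a-1)\wedge\sigma$, interpreting $\ell_1/\ell_0\lesssim\ell_2^{1-\mathfrak a}$, which is how the scales are chosen.
\end{enumerate}

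The main obstacle I expect is step 3 together with the resonant-block decay estimate. The long-range hopping lets the chain jump past a resonant block without entering it, or land deep inside $\Lambda_1'$ from far away. Unlike the nearest-neighbour case, these contributions must be controlled by the $e^{-c_T|w-w'|}$ tail, summed over all landing points. This has to be done so that the per-crossing cost stays $O(\ell_1)$ and is never multiplied by something growing with $\ell_0$. I would handle this by always stopping the chain at the first point within distance $\ell_2$ of a resonant block. Landings deep inside $\Lambda_1'$ would be treated as a single resonant step with the decay of $G_{\Lambda_1'}$ from the previous paragraph, following \cite[Lemma~3.1, Remark~3.6]{LSZ25}.
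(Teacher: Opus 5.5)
Your scheme is the Bourgain--Kenig argument that the paper relies on; the paper gives no proof of its own and only cites [BK05, Lemma 2.4] and [LSZ25, Lemma 3.1]. That scheme is: iterate the geometric resolvent identity over the good $\ell_2$-blocks, cross the resonant set using the Wegner bound on $\mathfrak{R}'$, and absorb the long-range tails with $|f(n)|\le C_T e^{-c_T|n|}$. Your good step and the absorption of polynomial factors are fine. \textbf{The step that handles the resonant blocks, however, does not work as written.}

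You obtain decay of $G_{\Lambda_1'}(x,w)$ by running the Poisson iteration inside $\Lambda_1'$, starting from $w$ within $\ell_2$ of $\Lambda\setminus\Lambda_1'$. The hypotheses only give good blocks $\Lambda'\subset\Lambda$ with $Q_{\ell_2/10}(n)\cap\Lambda\subset\Lambda'$. For $n$ within $O(\ell_2)$ of $\Lambda\setminus\Lambda_1'$, such a block generally sticks out of $\Lambda_1'$. Then the bounds on $G_{\Lambda'}$ say nothing about $G_{\Lambda'\cap\Lambda_1'}$, so the identity for $G_{\Lambda_1'}$ cannot even be started at $w$. (In the application, the $L_1$-intervals of $\mathcal{F}^{(1)}$ need not lie inside the $Q_s$.) Even granting your bound, it yields decay only over $\ell_1/8$, not the factor $e^{-\gamma|x-w'|+\ell_1^{1-\sigma}}$ you then assert.

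The repair is to use no decay of $G_{\Lambda_1'}$ at all.
\begin{itemize}
\item For $x\in\mathfrak{R}\cap\Lambda_1'$, bound the resonant term by $\|G_{\Lambda_1'}\|\le e^{\ell_1^{1-\sigma}}$ times $\sum_{w\in\Lambda_1'}|f(w-w')|\lesssim e^{-c_T\operatorname{dist}(w',\Lambda_1')/2}$.
\item Landing points with $\operatorname{dist}(w',\Lambda_1')\ge c\ell_1$ are killed by the hopping tail.
\item The other landing points lie outside every block of $\mathfrak{R}'$, by the separation. Hence they are at distance $\ge\ell_1/8$ from all of $\mathfrak{R}$.
\item From there, run good steps over a length $\gtrsim\ell_1$; these only use blocks contained in $\Lambda$.
\end{itemize}
Since $\gamma_{\ell_2}\ell_1\gg\ell_1^{1-\sigma}$, this pays for the Wegner factor. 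That inequality is an assumption you import from the application, and you should state it. The resulting composite step gives both the bound on $\max|G_\Lambda|$ and the crossing cost $e^{\ell_1^{1-\sigma}+O(\gamma\ell_1)}$ used in your step 2.

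\textbf{The bookkeeping in steps 2--3 also needs correction.}

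First, a chain may re-enter the same resonant block many times, so ``at most $O(K)$ crossings'' is not justified. What is true is the following. The part of the displacement not covered by good steps lies in the union of the slightly enlarged blocks of $\mathfrak{R}'$, of total size $O(K\ell_1)$. Each visit's factor $e^{\ell_1^{1-\sigma}}$ is paid by the $\ge\ell_1/8$ of good travel that must precede the next visit. This costs only $O(\ell_1^{-\sigma})$ in the rate.

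Second, the loss $O(K\gamma\ell_1/\ell_0)$ cannot be rewritten as $O(\ell_2^{1-\mathfrak{a}})$. Since $\ell_0\sim\ell_2^{\mathfrak{a}}$, the relation $\ell_1/\ell_0\lesssim\ell_2^{1-\mathfrak{a}}$ would force $\ell_1\lesssim\ell_2$, contradicting $\ell_2\ll\ell_1$. From $\ell_1\le\ell_0/2$ alone, your argument does not give $\gamma_{\ell_0}\ge\gamma_{\ell_2}-\mathcal{O}(\ell_2^{-c})$. You need the extra relation $\ell_1/\ell_0\lesssim\ell_2^{-c}$. That relation does hold where the lemma is applied. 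There $\ell_0=N$, $\ell_1=\widehat{L}\sim N^{1-\varepsilon/3}$, $\ell_2=L_1=N^{1-\varepsilon}$ and $\sigma=\varepsilon/4$, so $\ell_1/\ell_0\sim N^{-\varepsilon/3}\le N^{-(1-\varepsilon)\varepsilon/4}=\ell_2^{-c}$.
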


\section{A rank-one perturbation lemma}
The following is a key lemma in \cite{DS20} for handling the movement of eigenvalues.

\begin{lem}\label{rank-one perturbation}
\textup{(\cite[Lemma 5.1]{DS20})} Suppose that  the real symmetric matrix \( A \in  \mathbb{R}^{n\times n}  \) has eigenvalues  
\[
\lambda_1 \geq \lambda_2 \geq \cdots \geq \lambda_n \in \mathbb{R}
\]  
with orthonormal eigenbasis \( v_1, v_2, \cdots, v_n \in \mathbb{R}^n \). Then for every $\beta>0$, there is some   $0<c\ll1$ (depending only  on $\beta$) such that, if  
\begin{enumerate}
    \item \(0 < r_1 < r_2 < r_3 < r_4 < r_5 < 1\),  
    \item \(r_1 \leq c \min\{r_3 r_5, r_2 r_3 / r_4\}\),
    \item \(0 < \lambda_j \leq \lambda_i < r_1 < r_2 < \lambda_{i-1}\),  
    \item \(v_{j}^2(x) \geq r_3\),  
    \item \(\displaystyle\sum_{r_2 < \lambda_s < r_5} v_{s}^2(x) \leq r_4\),  
\end{enumerate}
then 
\[
\operatorname{trace} \mathbf{1}_{[r_1, \infty)}(A) < \operatorname{trace} \mathbf{1}_{[r_1, \infty)}(A + \beta e_x \otimes e_x),
\]
where \( e_x \in \mathbb{R}^n \) is the \(x\)-th standard basis element.

\end{lem}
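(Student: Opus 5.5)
The plan is to prove Lemma \ref{rank-one perturbation} by a secular-equation (Aronszajn--Krein) argument for the rank-one perturbation $A+\beta e_x\otimes e_x$. The idea is to locate the one eigenvalue of the perturbed matrix that can newly cross the level $r_1$, and to show that it lands at or above $r_1$.

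\textbf{Step 1 (reduction).} Write $w_s=v_s(x)$, so that $\sum_s w_s^2=1$, and let $P=e_x\otimes e_x$. Since $\beta P\ge 0$, every eigenvalue moves up weakly under the perturbation. By Cauchy interlacing for rank-one perturbations, the count $\operatorname{trace}\mathbf 1_{[r_1,\infty)}$ changes by either $0$ or $1$. Hypothesis (3) says $A$ has no eigenvalue in $[r_1,r_2]$, and $\lambda_i$ is the largest eigenvalue of $A$ below $r_1$. So the count increases exactly when the perturbed eigenvalue $\mu$ lying in $(\lambda_i,\lambda_{i-1}]$ satisfies $\mu\ge r_1$.

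To make this precise, I would split $\mathbb R^n$ into the cyclic subspace generated by $e_x$ and its orthogonal complement. On the complement $A$ is unchanged. On the cyclic subspace the spectrum is simple, and the perturbed eigenvalues are exactly the roots of
\[
g(\mu):=\sum_{s} \frac{w_s^2}{\mu-\lambda_s}=\frac1\beta,
\]
where eigenvalues with $w_s=0$ are dropped and degenerate eigenvalues are grouped together. On each interval between consecutive poles, $g$ is strictly decreasing, running from $+\infty$ down to $-\infty$ (and from $+\infty$ down to $0$ on the top interval). Hypothesis (4) gives $w_j^2\ge r_3>0$, so the pole $\lambda_i$ (or some pole in $[\lambda_j,\lambda_i]$) is genuinely present. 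Because $g$ is decreasing on the interval containing $r_1$, the root there is $>r_1$ precisely when $g(r_1)>1/\beta$. This reduces the lemma to the single inequality $g(r_1)>1/\beta$.

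\textbf{Step 2 (lower bound for $g(r_1)$).} I would split the sum over $s$ into four groups.
\begin{itemize}
\item Terms with $\lambda_s<r_1$ are nonnegative. Among them, the $s=j$ term satisfies $w_j^2/(r_1-\lambda_j)\ge r_3/r_1$, using $\lambda_j>0$.
\item No eigenvalues lie in $[r_1,r_2]$, so this group is empty.
\item Terms with $r_2<\lambda_s<r_5$ contribute at least $-\sum w_s^2/(\lambda_s-r_1)\ge -r_4/(r_2-r_1)\ge -2r_4/r_2$, by hypothesis (5). Here I use $r_1\le r_2/2$, which follows from (2) because $r_3/r_4<1$ and $c\le 1/2$.
\item Terms with $\lambda_s\ge r_5$ contribute at least $-2/r_5$, using $\sum_s w_s^2\le1$ and $r_1\le r_5/2$.
\end{itemize}
Adding these up,
\[
g(r_1)\ \ge\ \frac{r_3}{r_1}-\frac{2r_4}{r_2}-\frac{2}{r_5}.
\]

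\textbf{Step 3 (choice of $c$).} Hypothesis (2) gives three bounds on the leading term:
\[
\frac{r_3}{r_1}\ge \frac{r_4}{c\,r_2},\qquad \frac{r_3}{r_1}\ge\frac{1}{c\,r_5},\qquad \frac{r_3}{r_1}\ge \frac1c,
\]
where the last one uses $r_5<1$. Take $c\le\min\{1/12,\ \beta/4\}$. Then each of $2r_4/r_2$ and $2/r_5$ is at most $r_3/(6r_1)$, while $1/\beta\le r_3/(4r_1)$. Hence $g(r_1)\ge \tfrac23\,r_3/r_1>1/\beta$, which gives the strict increase of the trace.

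\textbf{Expected obstacle.} The analysis above is routine. The step needing the most care is the bookkeeping in Step 1 when $A$ has repeated eigenvalues or when some $w_s$ vanish. The cyclic-subspace reduction handles this cleanly: eigenvalues orthogonal to $e_x$ do not move, and the remaining spectrum is simple. Alternatively, one could perturb $A$ slightly and pass to the limit, using that the strict inequality $g(r_1)>1/\beta$ is stable under small perturbations.
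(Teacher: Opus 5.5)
\textbf{Verdict.} Your proof is correct and complete. The paper itself gives no proof of this lemma; it imports it verbatim from \cite[Lemma 5.1]{DS20}. So there is no in-paper argument to compare against, and your secular-equation argument stands as a valid self-contained proof.

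\textbf{What checks out.} The following steps all go through.
\begin{itemize}
\item The reduction to $g(r_1)>\beta^{-1}$, where $g(\mu)=\sum_s v_s(x)^2/(\mu-\lambda_s)$, is valid; note that $r_1$ is not a pole, because of the empty gap $[r_1,r_2]$.
\item The four-way split of $g(r_1)$ is correct. It uses $\lambda_j>0$ to get the lower bound $r_3/r_1$, hypothesis (5) for the band $(r_2,r_5)$, and $r_1\le r_2/2$, $r_1\le r_5/2$, both of which follow from (2).
\item The explicit choice $c=\min\{1/12,\beta/4\}$ depends only on $\beta$ and gives $g(r_1)\ge \tfrac23 r_3/r_1>\beta^{-1}$.
\end{itemize}

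\textbf{The one step that needs a sentence more.} You should spell out the passage from ``the root of $\beta g=1$ in the gap of poles containing $r_1$ exceeds $r_1$'' to the strict increase of the trace.
\begin{itemize}
\item On the cyclic subspace of $e_x$, let $p_1>\dots>p_k>r_1$ be the poles above $r_1$, and set $p_0=+\infty$.
\item There is exactly one root in each of the $k$ intervals $(p_{l+1},p_l)$ for $0\le l<k$, and all of these lie above $r_1$.
\item In addition, the root in the gap containing $r_1$ lies above $r_1$ if and only if $g(r_1)>\beta^{-1}$.
\item So the count on the cyclic subspace goes from $k$ to $k+1$, while nothing changes on the orthogonal complement.
\end{itemize}

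\textbf{Optional shortcut.} If you want to bypass the degeneracy bookkeeping you flagged, apply Haynsworth inertia additivity to the bordered matrix $\begin{pmatrix} A-r_1 & e_x\\ e_x^{T} & -\beta^{-1}\end{pmatrix}$. Take the Schur complement with respect to each diagonal block in turn: one gives $g(r_1)-\beta^{-1}$, the other gives $A-r_1+\beta e_x\otimes e_x$. Writing $n_+$ for the number of positive eigenvalues, this yields directly
\[
n_+\bigl(A+\beta e_x\otimes e_x-r_1\bigr)=n_+(A-r_1)+\mathbf{1}\{g(r_1)>\beta^{-1}\}.
\]
Since $r_1\notin\operatorname{spec}(A)$, the right-hand side's first term equals $\operatorname{trace}\mathbf{1}_{[r_1,\infty)}(A)$. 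This is exactly the needed strict inequality once $g(r_1)>\beta^{-1}$.
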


\section*{Acknowledgement}
This work  is  supported by  the National Key R\&D Program
of China under Grant 2023YFA1008801.
 Y. Shi is supported by the NSFC(12522110) and  Z. Zhang is  supported by the NSFC (12288101).   

\section*{Data Availability}
		The manuscript has no associated data.
\section*{Declarations}
		{\bf Conflicts of interest} \ The authors  state  that there is no conflict of interest.

\newcommand{\etalchar}[1]{$^{#1}$}

\end{document}